\documentclass[12pt,a4paper, reqno]{amsart}
\usepackage{float} %for {figure}[H] 
\usepackage[utf8]{inputenc} %for a,u,o umlau
\usepackage{tcolorbox} % for color box  algorithm
\usepackage{dsfont} %for \mathds{1}
\usepackage{hyperref} %hyperlink citiations
\usepackage{amsmath} % for using /substack
\usepackage{amssymb} % for /lesssim
\usepackage{mathtools} % to put text over symbols, .i.e, \stackrel{\mathclap 
\usepackage{algorithmic} %for algorithm
\usepackage[ruled,vlined]{algorithm2e} %for algorithm
\usepackage{amsfonts,amsthm}%for formula breaking
\usepackage[foot]{amsaddr} % Affiliation at footnote
\usepackage[margin=2.9cm]{geometry} % Margin setting
\usepackage{enumitem}  % item (i), (ii)

\theoremstyle{plain}
\newtheorem{Th}{Theorem}[section]
\newtheorem{lemma}[Th]{Lemma}
\newtheorem{Cor}[Th]{Corollary}
\newtheorem{proposition}[Th]{Proposition}

\theoremstyle{definition}
\newtheorem{definition}[Th]{Definition}

\newtheorem{Rem}[Th]{Remark}
\newtheorem{?}[Th]{Problem}

\DeclareMathOperator*{\argmin}{arg\,min}
\newcommand{\supp}{\textup{supp }}

%%% Alex
\usepackage{soul}
\newcommand{\alex}[1]{\textcolor{black}{#1}}
\newcommand{\leave}[1]{}
%%%

\usepackage{fancyhdr}
\pagestyle{fancy}
\fancyhf{}
\rhead[VT. Do ]{Multiple separation and inpainting}
\lhead{\thepage}

% Keywords command
\providecommand{\keywords}[1]
{
  \small	
  \textbf{\textit{Keywords}} 
}

\begin{document}

\title{Multi-component separation, inpainting and denoising with recovery guarantees}
\author{Van Tiep Do$^{* \dagger}$ }

\keywords{ Geometric Separation \and wavelets \and Shearlets \and   $l_1$-minimization \and  Sparsity \and Cluster Coherence \and dual frames.}

%    author two information

\address{ $^{*}$ Department of Mathematics, Technische Universit{\"a}t Berlin, 10623 Berlin, Germany}

\address{ $^{\dagger}$ Vietnam National University, 334 Nguyen Trai, Thanh Xuan, Hanoi}

\email{tiepdv@math.tu-berlin.de}
\thanks{}

%\subjclass[2010]{42C40, 42C15, 65K10, 65J22, 65T60, 68U10, 90C25}

\dedicatory{}

\maketitle

\begin{abstract}
In image processing, problems of separation and reconstruction of missing pixels
from incomplete digital images have been far more advanced in past decades.  Many empirical results   have produced very good results, however, providing a theoretical analysis for the success of algorithms is not an easy task, especially, for inpainting and separating multi-component signals. In this paper,  we propose two main algorithms based on $l_1$ constrained  and unconstrained minimization for 
 separating $N$ distinct geometric components and simultaneously filling-in the missing part of the observed image. We then present a theoretical guarantee for these algorithms using compressed sensing technique, which is based on a principle that each component can be sparsely represented by a suitably chosen dictionary. Those sparsifying systems are extended to the case of general frames instead of Parseval frames which have been typically used in the past.  We finally prove that the method does
indeed succeed in separating point singularities from curvilinear singularities and texture as well as inpainting the missing band contained in curvilinear singularities and texture. %Finally, we present a  several experiments which focus on inpainting multiple component images are presented to show the performance of proposed methods. 
\end{abstract}

\section{Introduction} 
In practical applications, many problems in signal or image processing involve the task of  separating two or more components from a given image. This problem is of interest in many applications \cite{30,31}, for instance, astronomers may desire to decompose the observed image into pointlike, curvelike, and texture structures. This raises a need to separate  various components from the original image in order to effectively analyze them. 
Also, the task of inpainting an image which is often composed of two or more components is  of interest in  many research fields.
However, at the first sight the above problems seem unable to solve from a theoretical point of view   because of the ill-poseness of the system that leads to infinitely many solutions. Recently, compressed sensing technique provides a firm foundation to recover a signal from incomplete measurements by exploiting the sparsity of the signal \cite{5}. It is well known that $l_1$ minimization performs exceptionally well in locating sparse solutions of underdetermined linear systems.
 Based on this observation, a great number of sparsity-based algorithms have been \alex{introduced in} this area. Various empirical works in the past have shown good results
 for either inpainting  \cite{3} or geometric separation \cite{20}. \alex{The} theoretical study \cite{12} first introduced an asymptotic result for separating points from curves using Parseval frames of wavelets and curvelets.
It was then followed by some other works to either inpaint a single component image \cite{7,15,18} or separate two components \cite{11,12,14}. Later, our previous work \cite{14} proposed a framework to  deal with simultaneous separation and inpainting for two-component signals. 
  A critical premise of these methods is that each of the components has a sparse representation in a certain dictionary.  
  
However, all aforementioned papers used Parseval frames and none of them deal with the case of multi-component separation and inpainting. The \alex{problem studied here} is completely different since we have more intersections among components which makes the problem more challenging. Each underlying layer has distinctive features, for instance, pointlike structure (smooth apart from point singularities), cartoon (the piecewise smooth part of the image), and texture (consisting of repeated texels). 
 There is still a lack of theoretical analysis  to explain why simultaneous separation and inpainting is possible for multi-component signals. In an attempt to generalize the theory to multi-component case, the proceeding paper \cite{10} introduced a theory which also exploited notions of  joint concentration and cluster coherence that first appeared in \cite{12}. However, the theory in \cite{10} can be problematic to apply for problems of separating more than two components,  which will be discussed in Section \ref{0208-7}. 

In this paper, motivated by our previous works \cite{11,14} we present a more general framework for simultaneous separation, inpainting, and denoising on degraded multi-component signals.  We introduce two algorithms based on $l_1$ minimization for analyzing these tasks, one deals with noiseless images and the other with additional noise. We then provide a theoretical guarantee for these algorithms by establishing $L_2$ error of the reconstructions.  In addition, our methodology  works for non-Parseval frames if  each of them possesses a well localized dual frame, see \cite{11} for the case of separating two components.  \alex{We further} apply the results to \alex{the separation of} points from curves and texture while inpaint\alex{ing} the missing content of the image. The key idea to the success of our method lies in the morphological difference among these ground truth components. They are then encoded in three different sparse representation systems, wavelets for representing point singularities, shearlets for representing curvilinear singularities, and Gabor frames for representing texture.  Based on that, we prove the success of the proposed algorithms using a multi-scale approach.

\subsection{Outline}
The remainder of this paper is organized  as follows.  We first start with some basic definitions, notions, and the formulation  of inpainting and geometric separation problems in Section \ref{0208-5}. Next, we modify the notions of joint concentration and cluster coherence to adapt to our need in Section \ref{0208-7}. We then derive a general theoretical guarantee in Theorem \ref{2006-3} using general frames for separating $N-$ component signals based on $l_1$ minimization. In section \ref{0208-6}, a similar theoretical guarantee is concluded in Theorem \ref{2006-4} to deal with noisy images. We finally transfer the abstract theory to derive Theorem \ref{3007-10} and \ref{3007-11} which prove that our algorithms are capable of denoising, inpainting, and separating points from curves and texture  in Section \ref{0208-10}.  The paper ends with a summary and conclusion in Section \ref{0208-1}.

\section{Mathematical formulation} \label{0208-5}
We  start with some notions and basic definitions.
 \subsection{Notations and defintions}
Let us first recall the  \emph{Schwartz space} of    \emph{rapidly decreasing functions} or \emph{Schwartz functions} 
\begin{equation} \label{0607-1}
     \mathcal{S}(\mathbb{R}^2):= \Big \{ f \in C^\infty(\mathbb{R}^2)  \mid   \forall K, N \in \mathbb{N}_0, \sup_{x \in \mathbb{R}^2} (1 + |x|^2 )^{-\frac{N}{2}} \sum_{|\alpha |\leq K} | D^\alpha f(x) | < \infty \Big \}. 
\end{equation}
The Fourier transform and inverse Fourier transform  for $f,F \in  \mathcal{S}(\mathbb{R}^2)$ are then defined  by 
$$ \hat{f}(\xi)=\mathcal{F}[f](\xi)= \int_{\mathbb{R}^2} f(x) e^{-2\pi i  x^T \xi} dx,$$
$$ \check{F}(x) =\mathcal{F}^{-1}[F](x)= \int_{\mathbb{R}^2} F(\xi) e^{2\pi i \xi^T x} dx,$$
which can be extended to functions in $L^2(\mathbb{R}^2)$, see \cite{13}.

Next, we recall some needed
definitions and facts from frame theory, more details can be found in \cite{4,6}. We call a family of vector $\Phi = \{ \phi_i \}_{i \in I}$ in a separable Hilbert space $\mathcal{H}$  a \emph{frame} for $\mathcal{H}$ if there are constants $A, B >0 $ so that for all $f \in \mathcal{H}$ we have
\begin{equation} \label{2207}
    A \| f \|_2^2 \leq \sum_{i \in I} | \langle f, \phi_i \rangle |^2 \leq B \|f \|_2^2, 
\end{equation} 
where $A, B$ are called the \emph{lower} and \emph{upper frame bound}.  We call it \emph{$A-$tight frame} in case  $A = B$ and a \emph{Parseval frame} or  \emph{tight frame} in case $A=B=1$.

By abuse of notation we write $\Phi$ again to denote the \emph{synthesis operator}
$$ \Phi : l_2(I) \rightarrow \mathcal{H}, \quad \Phi(\{ c_i \}_{i \in I} ) = \sum_{i \in I }c_i \phi_i.$$
We  define the \emph{analysis operator} $\Phi^*$ by
$$ \Phi^* : \mathcal{H}  \rightarrow l_2(I), \quad \Phi^*(f ) = (\langle f, \phi_i \rangle)_{i \in I} .$$
Associated with a frame $\Phi = \{ \phi_i \}_{i \in I},$ we define  the \emph{frame operator} $\mathbb{S}$ by
$$\mathbb{S}=\Phi \Phi^*: \mathcal{H} \rightarrow \mathcal{H}, \quad \mathbb{S} f= \sum_{i \in I} \langle f, \phi_i \rangle \phi_i. $$
Given a frame $\Phi = \{ \phi_i 
\}_{i \in I}$  for $\mathcal{H}$,  a sequence $\Phi^d = \{ \phi^d_i \}_{i \in I}$ in $\mathcal{H}$ is called \\  \phantom{11} - an \emph{analysis pseudo-dual} if the following holds
\begin{equation} \label{2207-1}
   f=\Phi (\Phi^d)^* f= \sum_{i \in I}\langle f, \phi^d_i \rangle \phi_i, \; \forall f \in \mathcal{H};
\end{equation}
    \quad - a \emph{synthesis pseudo-dual} if we have
    \begin{equation} \label{2207-2}
    f=(\Phi^d) \Phi^* f= \sum_{i \in I}\langle f, \phi_i \rangle \phi_i^d,  \; \forall f \in \mathcal{H};
    \end{equation}
 \quad - a \emph{dual frame} of $\Phi$ if both  \eqref{2207-1} and \eqref{2207-2} hold.
By \eqref{2207},  the corresponding frame operator $\mathbb{S}$ is
self-adjoint, invertible on $\mathcal{H}$ \cite{6}.
As a result, there exists a dual frame 
$\{ \mathbb{S}^{-1} \phi_i \}_{i \in I}$ 
called the \emph{canonical
dual frame} of $\Phi = \{ \phi_i \}_{i \in I}$  with frame bounds $(B^{-1}, A^{-1})$.

Let $\mathbb{D}_{\Phi}$ denote the set of all synthesis pseudo-dual of the frame $\Phi$
\begin{equation} \label{Eq2}
    \mathbb{D}_{\Phi}= \{ \Phi^d = \{ \phi^d_i \}_{i \in I} \in \mathcal{H} \mid f=\Phi^d \Phi^* f= \sum_{i \in I}\langle f, \phi_i \rangle \phi_i^d, \;\forall f \in \mathcal{H} \}.
\end{equation} 
Clearly, $\mathbb{D}_{\Phi} \neq \emptyset$ since 
   $\mathbb{S}^{-1}\Phi \in \mathbb{D}_{\Phi}$. In addition, in case  $\Phi $ forms a Parseval frame  
we have $\Phi \in \mathbb{D}_\Phi$.

 \subsection{Problem of image separation}

In image processing, the image to be analyzed is often composed of multiple components. Given an image $f$, we assume that $f$ can be decomposed of $N$ components, i.e., 
\begin{equation} \label{0608-1}
    f= \sum_{m}f_m,
\end{equation} where \alex{each} $f_m$ corresponds to \alex{a} geometric feature contained in  $f$. It would be ideal to extract all of these \alex{parts} from the original image for analyzing them separately.  
For practical purposes, we later assume that these components contain  texture and jumps or changes in behavior along point, curvilinear discontinuities.
The prior information on underlying components is important to capture the geometry of the discontinuities by choosing suitable dictionaries, each sparsely represent\alex{ing} its intended content type. 
% \textcolor{red}{Notice about $A^c$.}

\subsection{Problem of image  inpainting}
Image inpainting \leave{involves}\alex{is} the task of filling in holes or removing  selected objects contained in an image using information from the surrounding area. Given a Hilbert space $\mathcal{H}$, we assume that $\mathcal{H}$ can be decomposed into a direct sum of missing subspace and known subspace, i.e.,  $\mathcal{H} = \mathcal{H}_K \oplus \mathcal{H}_M$. Let $P_M$ and $P_K$ denote the orthogonal projection of $\mathcal{H}$ onto the missing part and the known part, respectively.  Our goal is to reconstruct the original image $f$ from knowledge of the known part $P_Kf.$
\subsection{Recovery via $l_1-$ minimization }
Among existing methods, $l_1$ minimization has been widely used for recovering sparse solutions. One appeared in several papers \cite{2,23} for separating two geometric components of a given image. There, the following procedure was proposed  for an observed signal $f$ and two Parseval frames $\{ \Phi_1 \}_{i \in I}, \{ \Phi_2 \}_{j \in J} $, 
    \begin{eqnarray}
    (f_1^{\star}, f_2^{\star}) &= & \argmin_{f_1, f_2} \| \Phi_1^* f_1 \|_1 + \| \Phi_2^* f_2 \|_1, 
     \quad \text{subject to} \quad f_1 + f_2 = f. \label{AL0-3007}
   \end{eqnarray}
The theoretical guarantee for \eqref{AL0-3007} presented in the aforementioned papers was based on notions of joint concentration and cluster coherence. 
 In our analysis, we consider the following  algorithm for simultaneously inpainting and separating $N$ distinct constituents from an observed image. 
\vskip 1mm
\begin{algorithm}[H]
\SetAlgoLined
\KwData{observed image $f$, $N$  frames $\{ \Phi_{1 i_1} \}_{i_1 \in I_1}, \dots, \{ \Phi_{N i_N} \}_{i_N\in I_N}$ with frame bounds $(A_1, B_1),$ $(A_2, B_2), \dots, (A_N, B_N)$.}

\textbf{Compute:} $(f_1^\star, f_2^\star, \dots, f_N^\star)$, where 
\begin{eqnarray*}
    (f_1^{\star}, f_2^{\star},\dots, f_N^\star ) &= & \argmin_{(f_1, \dots, f_N)} \| \Phi_1^* f_1 \|_1 + \| \Phi_2^* f_2 \|_1 + \dots \| \Phi_N^* f_N \|_1, \nonumber \\
    && \quad \text{subsect to} \quad P_K(f_1 + f_2 + \dots + f_N)= P_Kf.
   \end{eqnarray*}  \vskip 0.5mm
\KwResult{ recovered components $f_1^{\star}, f_2^{\star}, \dots, f_N^\star.$ } 
\vskip 1mm
 \caption{\label{AL1-3007} Constrained optimization}
\end{algorithm}
\vskip 1mm
We then provide a theoretical guarantee for the success of the algorithm in the next section.

\section{Theoretical guarantee for multiple component separation problems} \label{0208-7}
\subsection{Joint concentration analysis}

The notion of joint concentration was first introduced in \cite{12} for separating  two geometric components using Paseval frames.  There, the  joint concentration associated with two Parseval frames $\Phi_1, \Phi_2$ and  sets of indexes $\Lambda_1, \Lambda_2$ was defined by
\begin{equation} \label{0508-1}
    \kappa(\Lambda_1, \Lambda_2)= \sup_{x \in \mathcal{H}} \frac{\| \mathds{1}_{\Lambda_1}\Phi_1^\star x \|_1 + \| \mathds{1}_{\Lambda_2} \Phi_2^\star x\|_1  }{\| \Phi_1^\star x \|_1 + \|\Phi_2^\star x\|_1}.
\end{equation} 
In our situation, we are now given the known part $P_K f$ of a signal $f$ where  $f$ is a superposition of $N$ geometric constituents. 
  An approach was proposed in \cite{10} to deal with  separating tasks, i.e., $P_K f =f,$ there
\begin{equation} \label{1108-1}
    \kappa = \sup_{x \in \mathcal{H}} \frac{\| \mathds{1}_{\Lambda_1}\Phi_1^* x \|_1 + \| \mathds{1}_{\Lambda_2} \Phi_2^* x\|_1  + \dots + \|\mathds{1}_{\Lambda_N}\Phi_N^* x \|}{\| \Phi_1^* x \|_1 + \|\Phi_2^* x\|_1 + \dots + \|\Phi_N^*x\|_1}.
\end{equation} 
Let us now have a closer look to these definitions.
Let $(f_1^*, f_2^*, \dots, f_N^*) $ be the solution of Algorithm \ref{AL1-3007} with $P_K = Id,$ and $(f_1, f_2, \dots, f_N)$ be the truly underlying components.
In case $N=2$, the underlying reason for the definition \eqref{0508-1} lies in the fact that $ f_1^* - f_1=-(f_2^* - f_2 )$ and the absolute value of the error is then encoded into variable  $x$ in   \eqref{0508-1}. It seems plausible that \eqref{1108-1} is a generalized version of \eqref{0508-1}.
However, the approach can be problematic  for more components   since from assumption $f_1^\star + f_2^\star + f \dots + f_N^\star =f_1 + f_2 +  \dots + f_N, N \geq 3$  we can not invoke a similar relation to encode with the same variable $x$ as in \eqref{1108-1}.  

In our study, we  modify the definition of joint concentration and cluster coherence to adapt to the situation of simultaneous separation and inpainting.
 Obviously, the separation problem is a trivial case of simultaneous separation and inpainting problem as we can set missing space $\mathcal{H}_M = \emptyset$.
Since we just know the known part, i.e., $P_K f = P_K(f_1^\star + f_2^\star + f \dots + f_N^\star) = P_K(f_1 + f_2 +  \dots + f_N )$ we will encode with different terms corresponding with  component errors $f_i^* - f_i, 1 \leq i \leq N .$
This will be now made precise.

\begin{definition} Let $\Phi_1, \Phi_2, \dots, \Phi_N$ be a set of  frames with frame bounds $(A_1, B_1),$ $(A_2, B_2), \dots, (A_N, B_N),$ and $\Lambda_1, \Lambda_2, \cdots, \Lambda_N $ be index sets. We define
\begin{enumerate}[label=(\roman*)]
    \item  the  \emph{N-joint concentration} $\kappa:= \kappa(\Lambda_1, \Phi_1; \dots;  \Lambda_N, \Phi_N)$ by 
\begin{equation} \label{CT0} 
 \kappa_N = \sup_{\sum_{n=1}^N f_N \in \mathcal{H}_M} \frac{\sum_{m} \Big (\| \mathds{1}_{\Lambda_m}\Phi_m^* P_K f_m \|_1 + \| \mathds{1}_{\Lambda_m}\Phi_m^* P_M f_m \|_1 \Big )}{\| \Phi_1^* f_1 \|_1 + \dots + \|\Phi_N^*f_N\|_1}, 
\end{equation} 
\item  the  \emph{separation N-joint concentration} $\kappa_N^{sep}:= \kappa(\Lambda_1, P_K\Phi_1; \dots;  P_K\Lambda_N, \Phi_N)$ by
\begin{equation} \label{CT1} 
\kappa_N^{sep}=  \sup_{\sum_{n=1}^N f_n \in \mathcal{H}_M} \frac{\| \mathds{1}_{\Lambda_1} \Phi_1^* P_Kf_1 \|_1 + \dots + \|\mathds{1}_{\Lambda_N} \Phi_N^* P_Kf_N \|_1}{\| \Phi_1^* f_1 \|_1 + \dots + \|\Phi_N^*f_N\|_1},
\end{equation}
\item 
  the  \emph{inpainting N-joint concentration} $\kappa_N^{inp}:= \kappa(\Lambda_1, P_M\Phi_1; \dots;  \Lambda_N, P_M\Phi_N)$ by
\begin{equation} \label{CT2}
\kappa_N^{inp}=  \sup_{(f_1, f_2, \dots, f_N)} \frac{\| \mathds{1}_{\Lambda_1}\Phi_1^* P_M f_1 \|_1 +   \dots + \|\mathds{1}_{\Lambda_N}\Phi_N^* P_M f_N \|_1}{\| \Phi_1^* f_1 \|_1 + \dots + \|\Phi_N^*f_N\|_1}.
\end{equation} 
\end{enumerate}
\end{definition}

These notions enable us to provide a theoretical for the success of  Algorithm \ref{AL1-3007}. Let us dive deeper into these notations by considering some special cases. If we consider each problem separately, for instance, in case of only separation problem, i.e., $\mathcal{H}_M=\emptyset,$ the joint concentration $\kappa_N \equiv \kappa_N^{sep}$, 
$\kappa_N^{sep}$ is therefore used to be responsible for the separation task. In this case, if $N=2$, the term $\kappa_2^{sep}$ is coincided with \eqref{0508-1} which guarantees the success of separating two components. 
 In case of without separating, only inpainting, i.e., $\kappa \equiv \kappa_1^{inp} =  \sup_{f_1} \frac{\| \mathds{1}_{\Lambda_1}\Phi_1^* P_M f_1 \|_1 }{\| \Phi_1^* f_1 \|_1 },$ this notation is slightly different from one that appeared in \cite{15} where $f_1$ was assumed to be in missing space $\mathcal{H}_M$, but they are bounded by the same cluster coherence which is later shown in Lemma \ref{Lm1} and they therefore play a similar role in the theoretical analysis of inpainting. 
Obviously, $\kappa_N \leq \kappa_N^{sep} + \kappa_N^{inp}.$ Bounding each term ensures the successful algorithm for the corresponding task. 

Next, we also extend a definition from \cite{15} to the case of general frames instead of Parseval frames.
\begin{definition} 
Fix $\delta > 0$. Given a Hilbert space $\mathcal{H}$ with a  frame $\Phi, f \in \mathcal{H} $ is $\delta-$relatively sparse in $\Phi$ with respect to $\Lambda$ if $\| \mathds{1}_{\Lambda^c} \Phi^\star f \|_1 \leq \delta , $ where $A^c $ denotes $X\setminus A.$
\end{definition}

The following result gives an estimate for the $L_2$ error of the reconstructed signals by Algorithm \ref{AL1-3007}.
\begin{proposition} \label{Pr1}
Let $\Phi_1, \Phi_2, \dots, \Phi_N$ be a set of  frames with pairs of frame bounds $(A_1, B_1),$ $(A_2, B_2), \dots, (A_N, B_N).$ For $  \delta_1, \delta_2, \dots, \delta_N \allowbreak > 0,$ we fix $\delta= \sum_{m=1}^N\delta_m,$ and suppose that $f \in \mathcal{H}$ can be decomposed as $x= \sum_{m=1}^N f_m$ so that each component $f_1, f_2, \dots, f_N$ is $\delta_1, \delta_2, \dots, \delta_N-$relatively sparse in $\Phi_1, \Phi_2, \dots, \Phi_N$ with respect to $\Lambda_1, \Lambda_2, \cdots, \Lambda_N,$ respectively. Let $(f_1^\star, f_2^\star, \dots, f_N^\star)$ solve Algorithm \ref{AL1-3007}. If we have $ \kappa_N < \frac{1}{2}$, then  
\begin{equation} \label{CT20}
\sum_{m=1}^N \| f_m^\star - f_m \|_2 \leq \frac{2\delta}{1 - 2 \kappa_N}.
\end{equation}
 
\end{proposition}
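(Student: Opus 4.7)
The plan is to reduce the claim to a standard feasible-point/cone-inequality argument and close the loop with $\kappa_N$. Set $h_m := f_m^\star - f_m$. Because both $(f_m)$ and $(f_m^\star)$ are feasible for the constraint in Algorithm \ref{AL1-3007}, we have $P_K\sum_{m=1}^N h_m = 0$, i.e.\ $\sum_m h_m \in \mathcal{H}_M$. This is exactly the hypothesis on the supremum defining $\kappa_N$ in \eqref{CT0}, which is what will allow us to invoke it below.

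First I would derive a cone-type inequality from optimality of $(f_m^\star)$. Writing $f_m^\star = f_m + h_m$, splitting each $\|\Phi_m^* f_m^\star\|_1$ according to $\mathds{1}_{\Lambda_m}$ and $\mathds{1}_{\Lambda_m^c}$, and applying the reverse triangle inequality yields
\begin{equation*}
\|\Phi_m^* f_m^\star\|_1 \geq \|\mathds{1}_{\Lambda_m}\Phi_m^* f_m\|_1 - \|\mathds{1}_{\Lambda_m}\Phi_m^* h_m\|_1 + \|\mathds{1}_{\Lambda_m^c}\Phi_m^* h_m\|_1 - \|\mathds{1}_{\Lambda_m^c}\Phi_m^* f_m\|_1.
\end{equation*}
Summing over $m$, comparing with $\sum_m\|\Phi_m^* f_m\|_1$, cancelling the common $\|\mathds{1}_{\Lambda_m}\Phi_m^* f_m\|_1$ terms, and using $\|\mathds{1}_{\Lambda_m^c}\Phi_m^* f_m\|_1 \leq \delta_m$ produces the cone inequality
\begin{equation*}
\sum_m \|\mathds{1}_{\Lambda_m^c}\Phi_m^* h_m\|_1 \;\leq\; \sum_m \|\mathds{1}_{\Lambda_m}\Phi_m^* h_m\|_1 + 2\delta.
\end{equation*}

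Next I would apply $\kappa_N$ to the tuple $(h_m)_m$. Writing $h_m = P_K h_m + P_M h_m$ and using the triangle inequality inside $\|\mathds{1}_{\Lambda_m}\Phi_m^* \cdot\|_1$, the numerator in \eqref{CT0} dominates $\|\mathds{1}_{\Lambda_m}\Phi_m^* h_m\|_1$. Since $\sum_m h_m \in \mathcal{H}_M$, setting $L := \sum_m \|\Phi_m^* h_m\|_1$ gives $\sum_m\|\mathds{1}_{\Lambda_m}\Phi_m^* h_m\|_1 \leq \kappa_N L$. Combining with the cone inequality,
\begin{equation*}
L \;=\; \sum_m \|\mathds{1}_{\Lambda_m}\Phi_m^* h_m\|_1 + \sum_m \|\mathds{1}_{\Lambda_m^c}\Phi_m^* h_m\|_1 \;\leq\; 2\kappa_N L + 2\delta,
\end{equation*}
and since $\kappa_N < 1/2$ we conclude $L \leq 2\delta/(1-2\kappa_N)$.

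Finally, I would pass from the analysis-side $\ell_1$ bound to the signal-side $L_2$ bound using the lower frame bounds: $\|h_m\|_2 \leq A_m^{-1/2}\|\Phi_m^* h_m\|_2 \leq A_m^{-1/2}\|\Phi_m^* h_m\|_1$, and summing yields \eqref{CT20} (absorbing the frame constants into the statement). The one genuinely delicate point — and the reason the definition \eqref{CT0} had to be modified from the two-component version \eqref{0508-1} — is the step invoking $\kappa_N$: each individual $h_m$ may have nonzero projections on both $\mathcal{H}_K$ and $\mathcal{H}_M$, so the numerator in \eqref{CT0} had to be designed as the \emph{sum} $\|\mathds{1}_{\Lambda_m}\Phi_m^* P_K f_m\|_1 + \|\mathds{1}_{\Lambda_m}\Phi_m^* P_M f_m\|_1$ in order to dominate $\|\mathds{1}_{\Lambda_m}\Phi_m^* h_m\|_1$ under the single hypothesis $\sum_m h_m \in \mathcal{H}_M$. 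Once this compatibility is in place, the rest of the argument is routine.
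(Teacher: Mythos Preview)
Your argument is correct and matches the paper's proof essentially step for step: set $h_m=f_m^\star-f_m$, observe $\sum_m h_m\in\mathcal{H}_M$, split on/off the clusters $\Lambda_m$, use optimality to get the cone inequality, apply the definition of $\kappa_N$ to bound the on-cluster part by $\kappa_N L$, and solve for $L=\sum_m\|\Phi_m^*h_m\|_1$. The only wrinkle---which you flag and the paper also glosses over---is the passage from $L$ to $\sum_m\|h_m\|_2$: the paper's proof silently assumes Parseval frames at that step (writing $\|h_m\|_2=\|\Phi_m^*h_m\|_2$), so your ``absorbing the frame constants'' remark is in fact the same move, and the stated inequality \eqref{CT20} without constants really only holds verbatim when each $A_m\geq 1$.
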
 
\begin{proof}
First, we set $z_m= f^\star_m - f_m, $ for $1 \leq m \leq N$. Since $\Phi_m$ is a Parseval frame for each $m \in \{ 1,2, \dots, N \}$, thus
\begin{eqnarray} 
\sum_{m=1}^N \|  f_m^\star - f_m \|_2 &=&  \sum_{m=1}^N  \| \Phi_m^*(f^\star_m - f_m ) \|_2 \nonumber \\
&\leq &  \sum_{i=1}^N \| \Phi_m^*(z_m) \|_1  : = \mathbb{T}. \label{A1} 
\end{eqnarray}
We observe that $P_K(\sum_{m=1}^Nf^\star_m)   = P_K(\sum_{i=1}^N f_m) =P_Kf$, this leads to 
\begin{equation}
  \sum_{i=1}^N z_m \in \mathcal{H}_M.   \label{A2}
\end{equation}
By \eqref{A2} and triangle inequality, we have
\begin{equation}
    \sum_{m=1}^N\| \mathds{1}_{\Lambda_m} \Phi_m^* z_m \|_1 \leq \sum_{m=1}^N\| \mathds{1}_{\Lambda_m} \Phi_m^* P_Kz_m \|_1  + \sum_{m=1}^N\| \mathds{1}_{\Lambda_m} \Phi_m^* P_Mz_m \|_1 \leq \kappa_N \mathbb{T}. \label{A3}
\end{equation}
Thus, we obtain
\begin{eqnarray}
\mathbb{T} &=& \sum_{m=1}^N\| \mathds{1}_{\Lambda_m} \Phi_m^* z_m \|_1 +\sum_{m=1}^N \| \mathds{1}_{\Lambda_m^c} \Phi_m^* (f^\star_m - f_m) \|_1 \nonumber \\
&\stackrel{\mathclap{\normalfont{ \textup{By} \; (\ref{A3})}} } \leq &
\quad \kappa_N \mathbb{T} +\sum_{m=1}^N \| \mathds{1}_{\Lambda_m^c} \Phi_m^* f^\star_m \|_1 + \delta \nonumber \\
&=& \kappa_N \mathbb{T} +\sum_{i=1}^N (\|  \Phi_m^*  f_m^\star \|_1 -  \| \mathds{1}_{\Lambda_m} \Phi_m^*  f_m^\star \|_1) + \delta.
\label{A4}
\end{eqnarray}
We note that $(f_1^\star, f^\star_2, \dots, f_N^\star) $ is a minimizer of $\textup{Algorithm \ref{AL1-3007}}$. Thus
\begin{equation}
\sum_{m=1}^N \| \Phi_m^* f^\star_m \|_1 \leq  \sum_{m=1}^N \| \Phi_m^* f_m \|_1 \label{A5}.
\end{equation}
Using \eqref{A3}, \eqref{A4}, \eqref{A5} and triangle inequality, we have
\begin{eqnarray*}
\mathbb{T} & \leq  &  \kappa_N \mathbb{T} +\sum_{i=1}^N  \|\Phi_m^* f_m \| + \sum_{i=1}^N (\| \mathds{1}_{\Lambda_m} \Phi_m^*  z_m \|_1 -  \| \mathds{1}_{\Lambda_m} \Phi_m^*  f_m \|_1) + \delta \\
& \leq &  \kappa_N \mathbb{T} + \sum_{i=1}^N \| \mathds{1}_{\Lambda_m} \Phi_m^*  z_m \|_1 + 2 \delta \\
& \leq &  2\kappa_N\mathbb{T} + 2 \delta.
\end{eqnarray*} 
Thus, we finally obtain $$\mathbb{T} \leq \frac{2 \delta}{1 - 2  \kappa_N }.$$
\end{proof}

For real world images, geometric constituents of the image $f$  can be supported at different places. We often deal with the case in which a component does not have missing values. This  appears more clearly from the following remark.
\begin{Rem} \label{3007-20}
For fixed $i \in \{ 1,2, \dots,N\}$ if there is no missing at component $f_i$, the definition of the joint \alex{concentration} is modified \alex{corresponding} to $P_K f_i = f_i, P_Mf_i =0,$ i.e.,
\begin{equation*} 
 \kappa_N = \sup_{\sum\limits_{n=1}^N f_N \in \mathcal{H}_M} \frac{\| \mathds{1}_{\Lambda_i}\Phi_i^* f_i \|_1 + \sum\limits_{m \neq i} \Big (\| \mathds{1}_{\Lambda_m}\Phi_m^* P_K f_m \|_1 + \| \mathds{1}_{\Lambda_m}\Phi_m^* P_M f_m \|_1 \Big )}{\| \Phi_1^* f_1 \|_1 + \dots + \|\Phi_N^*f_N\|_1}, 
\end{equation*} 

\begin{equation*} 
\kappa_N^{sep}=  \sup\limits_{\sum\limits_{n=1}^N f_N \in \mathcal{H}_M} \frac{\| \mathds{1}_{\Lambda_1} \Phi_i^* f_i \|_1 + \sum\limits_{m \neq i} \|\mathds{1}_{\Lambda_m} \Phi_m^* P_Kf_m \|_1}{\| \Phi_1^* f_1 \|_1 + \dots + \|\Phi_N^*f_N\|_1},
\end{equation*}

\begin{equation*} 
\kappa_N^{inp}=  \sup_{(f_1, f_2, \dots, f_N)} \frac{ \sum\limits_{m \neq i} \|\mathds{1}_{\Lambda_N}\Phi_N^* P_M f_N \|_1}{\| \Phi_1^* f_1 \|_1 + \dots + \|\Phi_N^*f_N\|_1}.
\end{equation*} 
And similar modification if there are more components without missing parts.
\end{Rem}
\subsection{Cluster coherence analysis}
\alex{Although} Proposition \ref{Pr1} ensures the success of Algorithm \ref{AL1-3007}\leave{but}, it is not obvious and may be difficult to verify when the bound is achieved. This raises a need to create a new tool to analyze the bound in practice. For this purpose, the notion of cluster coherence was first introduced in \cite{12} to replace the notion \alex{of} joint concentration, it was then followed by many authors \cite{7,14,15,18}. There, the cluster was defined by
$$ \mu_c(\Lambda, \Phi_1; \Phi_2):=  \max_{j\in J}  \sum_{i \in \Lambda} | \langle  \phi_{1i}, \phi_{2j} \rangle |.$$
For our analysis, we optimize the definition in \cite{12} by putting  the sum inside the inner product to make the cluster coherence smaller which therefore offers more advantages in proving the success of the proposed algorithm. We also extend it to the case of general frames instead of Parseval frames.

\begin{definition}
Given two frames $\Phi_1 = (\Phi_{1i})_{i \in I}  $ and $\Phi_2 = (\Phi_{2j})_{j \in  J} $. Then the cluster coherence $\mu_c(\Lambda, \Phi_1; \Phi_2) $ of $\Phi_1$ and $\Phi_2$ with respect to the index set $\Lambda \subset I $ is defined by
\begin{equation} \label{CT300}
\mu_c(\Lambda, \Phi_1; \Phi_2):=  \max_{j\in J}  | \langle \sum_{i \in \Lambda} \phi_{1i}, \phi_{2j} \rangle |
\end{equation}
and the cluster coherence associated with missing subspace
\begin{equation} \label{CT4}
\mu_c(\Lambda,P_M \Phi_1; P_M \Phi_2):=  \max_{j\in J}  | \langle \sum_{i \in \Lambda} P_M \phi_{1i},P_M \phi_{2j} \rangle |.
\end{equation}
\end{definition}

Obviously, by triangle inequality we have 
\[ \max_{j \in J}  \Big | \langle \sum_{i \in \Lambda} \phi_{1i}, \phi_{2j} \rangle \Big | \leq   \max_{j\in J}  \sum_{i \in \Lambda} | \langle  \phi_{1i}, \phi_{2j} \rangle |, \]
and
\[ \max_{j \in J}  \Big | \langle \sum_{i \in \Lambda} P_M\phi_{1i}, P_M\phi_{2j} \rangle \Big | \leq   \max_{j\in J}  \sum_{i \in \Lambda} | \langle  P_M\phi_{1i}, P_M\phi_{2j} \rangle |. \]

To derive a new theoretical guarantee that is useful in practice, our goal is to bound the joint concentration from above by the cluster coherence. For this, we use the following lemma.

\begin{lemma} \label{Lm1} Consider $N$  frames $\Phi_1, \Phi_2, \dots, \Phi_N$ associated with index sets $\Lambda_1, \Lambda_2,$ $ \dots, \Lambda_N$, respectively. The followings hold
 \begin{enumerate}[label=(\roman*)]
     \item  
     \begin{eqnarray*}
     \kappa_N^{sep} &\leq&  \max_{1 \leq m \leq N} \inf_{\Phi_m^d \in \mathbb{D}_{\Phi_m}}  \sum_{\substack{n \neq m \\ 1 \leq n \leq N }} \mu_c(\Lambda_n,P_K \Phi_n; \Phi_m^d) \leq \mu_{c,N}^{sep}, 
        \end{eqnarray*}
   where $$ \mu_{c,N}^{sep}:= \max_{1 \leq m \leq N} \inf_{\Phi_m^d \in \mathbb{D}_{\Phi_m}}  \sum_{\substack{n \neq m \\ 1 \leq n \leq N }} \Big ( \mu_c(\Lambda_n, \Phi_n; \Phi_m^d) + \mu_c(\Lambda_n,P_M \Phi_n; \Phi_m^d) \Big ). $$
  
     \item  $$\kappa_N^{inp} \leq \mu_{c,N}^{inp}:= \max_{1 \leq m \leq N} \inf_{\Phi_m^d \in \mathbb{D}_{\Phi_m}}  \mu_c(\Lambda_m, P_M \Phi_m; \Phi_m^d ).$$
     \item 
     \begin{eqnarray*}
     \kappa_N &\leq& \mu_{c,N}:=\max_{1 \leq m \leq N} \Big ( \mu_c(\Lambda_m, P_M \Phi_m; \Phi_m^d )+ \sum_{\substack{n \neq m \\ 1 \leq n \leq N }} \mu_c(\Lambda_n, P_K \Phi_n; \Phi_m^d)   \Big) \\
     &\leq & \mu_{c,N}^{sep} + \mu_{c,N}^{inp}.
     \end{eqnarray*}
 \end{enumerate}
\end{lemma}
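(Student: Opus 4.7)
The argument rests on a single duality/pseudo-dual trick applied in three parallel flavors. The key observation is the reproducing identity $f = \sum_k \langle f, \phi_{mk}\rangle \phi_{mk}^d$ from the definition of a synthesis pseudo-dual (see \eqref{2207-2}), which yields, for any $w \in \mathcal{H}$ and any $\Phi_m^d \in \mathbb{D}_{\Phi_m}$, the bound
\[ |\langle f, w \rangle| = \Big| \sum_k \langle f, \phi_{mk}\rangle \langle \phi_{mk}^d, w\rangle \Big| \leq \|\Phi_m^* f\|_1 \cdot \max_k |\langle \phi_{mk}^d, w\rangle| \]
by H\"{o}lder's inequality. All three parts reduce to choosing $w$ appropriately and identifying $\max_k |\langle \phi_{mk}^d, w\rangle|$ with one of the cluster coherences.

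For (ii), I would write each numerator term $\|\mathds{1}_{\Lambda_m}\Phi_m^* P_M f_m\|_1$, using $l_1$--$l_\infty$ duality, as $\sup_\sigma \langle P_M \Phi_m \sigma, f_m\rangle$, where $\sigma$ is supported on $\Lambda_m$ with $\|\sigma\|_\infty \leq 1$. Applying the key inequality with $w = P_M \Phi_m \sigma$ and then taking the supremum over $\sigma$ bounds this term by $\|\Phi_m^* f_m\|_1 \cdot \mu_c(\Lambda_m, P_M \Phi_m; \Phi_m^d)$. Summing over $m$, pulling $\max_m$ out of the sum, and taking the infimum over pseudo-duals produces $\kappa_N^{inp} \leq \mu_{c,N}^{inp}$.

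For (i), the extra ingredient is the constraint $\sum_n f_n \in \mathcal{H}_M$, equivalently $\sum_n P_K f_n = 0$. I substitute $P_K f_m = -\sum_{n \neq m} P_K f_n$ inside each numerator term, apply the triangle inequality to split into cross terms $\|\mathds{1}_{\Lambda_m}\Phi_m^* P_K f_n\|_1$ with $n \neq m$, and relabel the double sum $m \leftrightarrow n$. The relabeled cross term $\|\mathds{1}_{\Lambda_n}\Phi_n^* P_K f_m\|_1$ now has cluster set $\Lambda_n$ attached to frame $\Phi_n$, while the analysis norm on the right will be $\|\Phi_m^* f_m\|_1$. Applying $l_1$--$l_\infty$ duality again and then the key inequality with $w = P_K \Phi_n \sigma$ and pseudo-dual $\Phi_m^d$ produces, after taking $\sup_\sigma$, a factor of $\mu_c(\Lambda_n, P_K \Phi_n; \Phi_m^d) \cdot \|\Phi_m^* f_m\|_1$. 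Summing over $n \neq m$ and bounding by the maximum over $m$ yields the first inequality of (i); the second inequality $\leq \mu_{c,N}^{sep}$ follows at once from $P_K = I - P_M$, which gives $\mu_c(\Lambda_n, P_K \Phi_n; \Phi_m^d) \leq \mu_c(\Lambda_n, \Phi_n; \Phi_m^d) + \mu_c(\Lambda_n, P_M \Phi_n; \Phi_m^d)$ by the triangle inequality.

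For (iii), I combine (i) and (ii) using a \emph{single} pseudo-dual $\Phi_m^d$ per index $m$ to handle both the $P_M$-term (diagonal, from inpainting) and the $P_K$-terms (off-diagonal, from separation) at once. The common denominator factor $\|\Phi_m^* f_m\|_1$ produced by the key inequality lets the two kinds of cluster coherence be added inside a single $\max_m$ and $\inf_{\Phi_m^d}$, giving $\kappa_N \leq \mu_{c,N}$; the final bound $\mu_{c,N} \leq \mu_{c,N}^{sep} + \mu_{c,N}^{inp}$ is then immediate from $\inf(a + b) \leq \inf a + \inf b$ together with the same triangle decomposition of $\mu_c(\Lambda_n, P_K\Phi_n; \Phi_m^d)$. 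The main obstacle is the index-swap $m \leftrightarrow n$ in the proof of (i): the numerator of $\kappa_N^{sep}$ naturally pairs $\Lambda_m$ with the factor $f_m$ inside $\Phi_m^*$, but the constraint $\sum_n P_K f_n = 0$ reroutes the cluster set onto the other frames, and one has to track this relabeling carefully so that the pseudo-dual appearing in the final cluster coherence $\mu_c(\Lambda_n, P_K \Phi_n; \Phi_m^d)$ matches the pseudo-dual used to extract the $\|\Phi_m^* f_m\|_1$ in the denominator.
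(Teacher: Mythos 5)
Your proof is correct and follows essentially the same route as the paper's: the $l_1$--$l_\infty$ duality plus H\"older step is just a repackaging of the paper's direct expansion of $\mathds{1}_{\Lambda_m}\Phi_m^* P\,\Phi_n^d\alpha_n$ followed by an interchange of sums, and the substitution $P_K f_m = -\sum_{n\neq m}P_K f_n$ with the subsequent $m\leftrightarrow n$ relabeling is exactly the paper's argument for (i). One caveat, which your argument shares with the paper's own proof: taking $\sup_\sigma$ over all $\|\sigma\|_\infty\le 1$ supported on $\Lambda$ produces the classical cluster coherence $\max_j\sum_{i\in\Lambda}|\langle\phi_{1i},\phi_{2j}\rangle|$ rather than the smaller ``optimized'' quantity $\max_j|\langle\sum_{i\in\Lambda}\phi_{1i},\phi_{2j}\rangle|$ of \eqref{CT300}, so strictly speaking both derivations establish the lemma only for the larger, unoptimized coherence.
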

\begin{proof}
(i) First, we set $\alpha_{m}:=\Phi_m^*f_m,$ and $\alpha_{m}=(\alpha_{m1}, \alpha_{m2}, \cdots, ) \in l_2, \forall m \in \{ 1, 2, \dots N \}.$  This leads to $f_m = \Phi_m^d \Phi_m^* f_m = \Phi_m^d \alpha_{m}.$ Since $\sum_{m=1}^Nf_m \in \mathcal{H}_M$ yields $\sum_{m=1}^NP_K f_n=0.$
Thus, we have
\begin{eqnarray*}
 \| \mathds{1}_{\Lambda_m} \Phi_m^*P_K f_m\|_1  & = & \| \sum_{n \neq m}\mathds{1}_{\Lambda_m} \Phi_m^* P_K f_n \|_1 = \| \sum_{n \neq m}\mathds{1}_{\Lambda_m} \Phi_m^* P_K \Phi_n^d \alpha_{n}\|_1\\
& \leq & \sum_{n \neq m} \sum_{i \in \Lambda_m} \Big( \sum_j | \langle P_K\Phi_{mi}, \Phi_{nj}^d \rangle | |\alpha_{nj}| \Big) \\
&  = &  \sum_{n \neq m}\sum_{j} \Big( \sum_{i \in \Lambda_m} | \langle P_K\Phi_{mi}, \Phi_{nj}^d \rangle | \Big) |\alpha_{nj}|  \\
& \leq &  \sum_{n \neq m}\mu_c(\Lambda_m, \Phi_m; \Phi_n^d)  \|\alpha_n \|_1 
 =   \sum_{n \neq m}\mu_c(\Lambda_m, \Phi_m; \Phi_n^d)  \|\Phi_n^* f_n \|_1.
\end{eqnarray*}
Thus,
\begin{eqnarray*}
\sum_{m=1}^N\| \mathds{1}_{\Lambda_m} \Phi_m^*P_K f_m\|_1 
& \leq & \max_{1 \leq m \leq N} \Big \{  \sum_{n \neq m}\mu_c(\Lambda_m, \Phi_m; \Phi_n^d)  \Big \}  \cdot \sum_{m=1}^N \|\Phi_m^* f_m\|_1.
\end{eqnarray*}
This finishes the proof.

(ii) We set $\alpha_{m}:=\Phi_m^*f_m$ for each $m \in \{1,2,\dots, N\} $. By dual frame property, we have  $f_m = \Phi_m^d \Phi_m^* f_m = \Phi_m^d \alpha_m, \; m=1,2 \dots, N$. Thus, we obtain
\begin{eqnarray}
\|\mathds{1}_{\Lambda_m} \Phi_m^* P_Mf_m \|_1 &=& \|\mathds{1}_{\Lambda_m} \Phi_m^* P_M \Phi_m^d \Phi_m^* f_m \|_1 
= \|\mathds{1}_{\Lambda_m} (P_M\Phi_m)^* \Phi_m^d  \alpha_m \|_1  \nonumber \\
& \leq & \sum_{i \in \Lambda_m} \Big( \sum_j | \langle P_M\Phi_{mi}, \Phi_{mj}^d \rangle | |(\alpha_{m})_j| \Big)   \nonumber \\
& =& \sum_{j} \Big( \sum_{i \in \Lambda_m}| \langle P_M\Phi_{mi}, \Phi_{mj}^d \rangle | \Big) |(\alpha_{m})_j|   \nonumber \\
& \leq & \mu_c(\Lambda_m, P_M \Phi_m; \Phi_m^d ) \|\alpha_m \|_1  
=  \mu_c(\Lambda_m, P_M \Phi_m; \Phi_m^d ) \|\Phi_m^*f_m \|_1, \nonumber
\end{eqnarray}
where $\Phi_m = \{ \Phi_{mj} \}_{j \in I_m}, \alpha_n=(\alpha_{n1}, \alpha_{n2}, \dots...) \in l_2.$ 
Thus, we have
\begin{eqnarray*}
\sum_{1 \leq m \leq N}\|\mathds{1}_{\Lambda_m} \Phi_m^* P_Mf_m \|_1 \leq \max_{1 \leq m \leq N} \mu_c(\Lambda_m, P_M \Phi_m; \Phi_m^d ) \cdot \sum_{1 \leq m \leq N}  \|\Phi_m^*f_m \|_1.
\end{eqnarray*}
This concludes
$$ \kappa_N^{inp} \leq \max_{1\leq m\leq N} \mu_c(\Lambda_m, P_M \Phi_m; \Phi_m^d ).$$
\end{proof}

(iii) Similar approach to (i) and (ii) we obtain
$$ \kappa_N \leq \max_{1 \leq m \leq N} \Big (\mu_c(\Lambda_m, P_M \Phi_m; \Phi_m^d ) + \sum_{\substack{n \neq m \\ 1 \leq n \leq N }} \mu_c(\Lambda_n, P_K \Phi_n; \Phi_m^d) \Big) .$$

We now present our first theoretical guarantee for the success of Algorithm \ref{AL1-3007} based on \alex{the} notion of cluster coherence.
\begin{Th} \label{2006-3}
Let $\Phi_1, \Phi_2, \dots, \Phi_N$ be a set of frames with pairs of frame bounds $(A_1,B_1),$ $ (A_2, B_2), \cdots, (A_N, B_N)$, respectively. For $  \delta_1, \delta_2, \dots, \delta_N \allowbreak > 0,$ we fix $\delta= \sum_{m=1}^N\delta_m,$ and suppose that $f \in \mathcal{H}$ can be decomposed as $f= \sum_{m=1}^N f_m$ so that each component $f_1, f_2, \dots, f_N$ is $\delta_1, \delta_2, \dots, \delta_N-$relatively sparse in $\Phi_1, \Phi_2, \dots, \Phi_N$, respectively. Let $(f_1^\star, f_2^\star, \dots, f_N^\star)$ solve Algorithm \ref{AL1-3007}. If we have $ \mu_{c, N} < \frac{1}{2}$, then  
\begin{equation} 
\sum_{m=1}^N \| f_m^\star - f_m \|_2 \leq \frac{2\delta}{1 - 2 \mu_{c, N}}, \label{EQ90-1}
\end{equation}
where $\mu_{c,N} \leq \mu_{c,N}^{inp} + \mu_{c,N}^{sep}$ defined in Lemma \ref{Lm1}.

\end{Th}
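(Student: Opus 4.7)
The plan is a direct combination of the two technical results developed just before the theorem: Proposition \ref{Pr1} already controls the reconstruction error in terms of the joint concentration $\kappa_N$, and Lemma \ref{Lm1}(iii) bounds $\kappa_N$ in terms of the cluster coherence $\mu_{c,N}$. So the theorem is essentially a corollary obtained by substituting one estimate into the other and using monotonicity.

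First I would invoke Lemma \ref{Lm1}(iii), which yields $\kappa_N \leq \mu_{c,N}$ unconditionally on the frames, missing subspace, and index sets. Consequently, the hypothesis $\mu_{c,N} < \tfrac{1}{2}$ immediately implies $\kappa_N < \tfrac{1}{2}$, so Proposition \ref{Pr1} is applicable to the minimizer $(f_1^\star,\dots,f_N^\star)$ of Algorithm \ref{AL1-3007} and delivers
\[
\sum_{m=1}^N \|f_m^\star - f_m\|_2 \;\leq\; \frac{2\delta}{1 - 2\kappa_N}.
\]

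Next I would exploit the monotonicity of the map $t \mapsto 2\delta/(1-2t)$ on $[0,\tfrac{1}{2})$. Since both $\kappa_N$ and $\mu_{c,N}$ lie in this interval with $\kappa_N \leq \mu_{c,N}$, we conclude
\[
\frac{2\delta}{1 - 2\kappa_N} \;\leq\; \frac{2\delta}{1 - 2\mu_{c,N}},
\]
which is precisely the claimed bound \eqref{EQ90-1}. The remark $\mu_{c,N} \leq \mu_{c,N}^{sep} + \mu_{c,N}^{inp}$ is just the content of Lemma \ref{Lm1}(iii) restated, so no further work is required.

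The real content here sits in the two previous results rather than in the theorem itself, so I do not expect a genuine obstacle. The only point to flag is that this two-step relaxation (first $\kappa_N \leq \mu_{c,N}$ via Lemma \ref{Lm1}, then monotonicity) is only as sharp as the triangle inequality used inside the definition \eqref{CT300} of cluster coherence; this is acceptable because the subsequent applications in Section \ref{0208-10} estimate $\mu_{c,N}^{sep}$ and $\mu_{c,N}^{inp}$ directly, and the resulting constant is then genuinely what controls the error.
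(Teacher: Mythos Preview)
Your proposal is correct and matches the paper's approach exactly: the paper's proof is the single sentence ``The claim holds by Proposition \ref{Pr1} and Lemma \ref{Lm1} (iii).'' Your explicit mention of the monotonicity of $t \mapsto 2\delta/(1-2t)$ on $[0,\tfrac{1}{2})$ fills in the one step the paper leaves implicit.
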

\begin{proof}
The claim holds by  Proposition \ref{Pr1} and Lemma \ref{Lm1} (iii).
\end{proof}

Let us now discuss, in particular,  some special cases of the theoretical guarantee.
If $\Phi_1, \Phi_2, \cdots, \Phi_N$ form Parseval frames we obtain 
\begin{eqnarray}
      \mu_{c,N} \leq \max_{1 \leq m \leq N} \Big (\mu_c(\Lambda_m, P_M \Phi_m; \Phi_m) +  \sum_{\substack{n \neq m \\ 1 \leq n \leq N }} \mu_c(\Lambda_n, P_K \Phi_n; \Phi_m) \Big ) \nonumber 
\end{eqnarray} 
\begin{equation}  \label{1308-1}
    \leq \max_{1 \leq m \leq N} \Big(\mu_c(\Lambda_m, P_M \Phi_m; \Phi_m) +  \sum_{\substack{n \neq m \\ 1 \leq n \leq N }} \mu_c(\Lambda_n, P_M \Phi_n; \Phi_m) + \mu_c(\Lambda_n, \Phi_n; \Phi_m) \Big ). 
\end{equation}
Here we exploit the fact that $\Phi_m^d \in \mathbb{D}_{\Phi_m}, \; 1\leq m \leq N.$
Now in case $m=1$ we have $\mu_{c,1} \leq \mu_c(\Lambda_1, P_M \Phi_1; \Phi_1)$. Thus,
choosing \alex{a} correct set of significant coefficients which provides small cluster sparsity and cluster coherence
 guarantees the success of inpainting a single component image which was studied in \cite{7,15,18}. In addition, in case $m=2, P_M=0$ both small $\delta, \mu_{c,2}$ guarantee the success of two-component image separation as studied in \cite{11,12}, or $m=2$ for the simultaneous separation and inpainting \cite{14}. To \leave{have}\alex{gain a} better understanding, we
present the following remark. 
\begin{Rem} \label{3007-21} By  \eqref{EQ90-1}, we say that  we can separate $f_m$ from $f_n$ if  the cluster coherence $\inf\limits_{\Phi_m^d \in \mathbb{D}_{\Phi_m}, \Phi_n^d \in \mathbb{D}_{\Phi_n}} \max \{ \mu_c(\Lambda_m, \Phi_m; \Phi_n^d), \mu_c(\Lambda_n, \Phi_n; \Phi_m^d) \}$ and $\delta_m + \delta_n $   are sufficiently small and we can inpaint the component $f_n$ if both  $\mu_c(\Lambda_n, P_M\Phi_n; \Phi_n^d)$ and $\delta_n$ are sufficiently small. In that sense, a few remarks are made as follows. 
\begin{enumerate}
 \item We can separate each pair of $f_m$ and $f_n,\;  1\leq m \neq n \leq N, $ if and only if we can separate all these $N$ components from $f= f_1 + f_2 + \dots + f_N$.  
 \item We can inpaint each single component$f_n,\;  1 \leq n \leq N, $ if and only if we can  inpaint  $f= f_1 + f_2 + \dots + f_N$. 
   \item We can separate $f_m$ from $f_n$ and inpaint each of them if and only if we can simultaneously separate and inpaint these two components. 
  \item We can simultaneously separate and inpaint $N$ components if and only if we can simultaneously separate and inpaint each pair of $N$ components.  
   \item 
  For the case $f_i$ has no missing part as referred in Remark \ref{3007-20}, the cluster in \eqref{EQ90-1} is modified with the replacement of $P_M \Phi_i, P_K \Phi_i$ by $0$ and $\Phi_i,$ respectively.
\end{enumerate}
\end{Rem}
\section{$l_1$ optimization of image denoising} \label{0208-6}
 In practice, the original signal $f$ often contains noise, this therefore requires
an adaption of Algorithm \ref{AL1-3007}. For constrained optimization approach, we assume that we know $ P_Kf + \eta,$ where noise $\eta \in \mathcal{H}_K$ such that there exists $m_0$, $\| \Phi_{m_0}^* \eta \|_1 < \epsilon,$ for small $\epsilon.$ Our goal now is  to reconstruct $f$. In this situation, we can modify Algorithm \ref{AL1-3007} by
\begin{eqnarray}
 (f_1^{\star}, f_2^{\star},\dots, f_N^\star ) &= & \argmin_{(f_1, \dots, f_N)} \| \Phi_1^* f_1 \|_1 + \| \Phi_2^* f_2 \|_1 + \dots \| \Phi_N^* f_N \|_1, \nonumber \\
 &&   \text{subsect to} \quad P_K(f_1 + f_2 + \dots + f_N)= P_Kf + \eta. \label{EQ89}
  \end{eqnarray} 
For this, we can easily verify the following bound similarly as in \eqref{EQ90-1} by modifying $z_{m_0}=f_{m_0} - f_{m_0} - \eta,$ 
\begin{equation} 
\sum_{m=1}^N \| f_m^\star - f_m \|_2 \leq \frac{2\delta + 2\kappa_N\epsilon}{1 - 2 \kappa_N} \leq \frac{2\delta + 2\mu_{c, N}\epsilon}{1 - 2 \mu_{c, N}}. \label{EQ91}
\end{equation}

Another idea to solve this problem is to use a regularization term. In this approach, we propose the following algorithm to solve an unconstrained minimization problem using a regularizer $\mathcal{R}(x) >0, \forall x \in \mathcal{H}, \lambda=const >0$.
Here the constant $\lambda$  controls the trade-off between the  $l_1$ norm minimization of analysis coefficients and the prior information. We will later prove that separation, inpainting, and denoising can be all included as one unified task by the following algorithm.
\vskip 1mm
\begin{algorithm}[H]
\SetAlgoLined
\KwData{observed image $f$, a constant $\lambda >0,$ a regularizer $\mathcal{R}$, $N$ Parseval frames $\{ \Phi_1 \}_{i \in I_1}, \dots, \{ \Phi_N \}_{j \in I_N}$. }

\textbf{Compute:} $(f_1^\star, f_2^\star, \dots, f_N^\star)$, where 
\begin{eqnarray*}
    (f_1^{\star}, f_2^{\star},\dots, f_N^\star ) &= & \argmin_{(f_1, \dots, f_N)} \| \Phi_1^* f_1 \|_1 + \| \Phi_2^* f_2 \|_1 + \dots \| \Phi_N^* f_N \|_1 + \\
    && \lambda \mathcal{R}(P_K(f_1 + f_2 + \dots + f_N) - P_Kf ).
   \end{eqnarray*}  \vskip 0.5mm
\KwResult{ recovered components $f_1^{\star}, f_2^{\star}, \dots, f_N^\star.$ } 
\vskip 1mm
 \caption{\label{AL2-3007} Unconstrained optimization}
\end{algorithm}
\vskip 1mm

For a special case of our algorithm, one  without theoretical analysis proposed in \cite{20} which used an $L_2$ regularization term
 \begin{equation*}
    (f_1^{\star}, f_2^{\star}) =  \argmin_{f_1, f_2} \| \Phi_1^* f_1 \|_1 + \| \Phi_2^* f_2 \|_1 + \lambda \| f - f_1 - f_2 \|_2^2. 
   \end{equation*}
This empirical work using Basic pursuit exhibited very good results for separating pointlike and curvelike parts. 
By utilizing the notions of cluster coherence, we also have a theoretical guarantee for the success of Algorithm \ref{AL2-3007} by the following result.
\begin{Th} \label{2006-4}
Let $\Phi_1, \Phi_2, \dots, \Phi_N$ be a set frames with frame bounds $(A_1,B_1),$ $ (A_2, B_2), \cdots, (A_N, B_N)$, respectively. For $  \delta_1, \delta_2, \dots, \delta_N \allowbreak > 0,$ fix $\delta= \sum_{m=1}^N\delta_m,$ and suppose that $f \in \mathcal{H}$ can be decomposed as $f= \sum_{m=1}^N f_m$ so that each component $f_1, f_2, \dots, f_N$ is $\delta_1, \delta_2, \dots, \delta_N-$relatively sparse in $\Phi_1, \Phi_2, \dots, \Phi_N$, respectively. We assume that $\Lambda_1, \Lambda_2, \dots, \Lambda_N$ satisfy
\begin{equation} \label{EQ1}
    \sum_{1 \leq m \leq N} \| \mathds{1}_{\Lambda_m} \Phi_m z \|_1 \leq \lambda \mathcal{R}(z), \;  \forall z \in \mathcal{H}, 
\end{equation} 
and $\mu_{c, N}< \frac{1}{2}$. 
 Let $(f_1^\star, f_2^\star, \dots, f_N^\star)$ solve Algorithm \ref{AL2-3007}, then we have 
 \begin{equation} \label{EQ76}
\sum_{m=1}^N \| f_m^\star - f_m \|_2 \leq \frac{2\delta}{1 - 2 \mu_{c, N}}.
\end{equation}
\end{Th}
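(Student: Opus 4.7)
The plan is to parallel the proof of Proposition \ref{Pr1}, with the regularizer hypothesis \eqref{EQ1} substituting for the explicit equality constraint that was present in Algorithm \ref{AL1-3007}. Setting $z_m := f_m^\star - f_m$, $w := P_K\bigl(\sum_m z_m\bigr)$, $\mathbb{T} := \sum_m \|\Phi_m^* z_m\|_1$, and $S := \sum_m \|\mathds{1}_{\Lambda_m} \Phi_m^* z_m\|_1$, the frame inequality (or passage to a canonical dual) reduces the claim to $\mathbb{T} \leq 2\delta/(1 - 2\mu_{c, N})$, so everything hinges on controlling $\mathbb{T}$.

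I would first reproduce the outside-cluster estimate of Proposition \ref{Pr1} almost verbatim, with only one modification: the optimality comparison there must now be replaced by the optimality inequality of Algorithm \ref{AL2-3007}. Since the ground-truth decomposition $(f_1,\ldots,f_N)$ satisfies $P_K \sum_m f_m = P_K f$, the regularizer vanishes at the truth (assuming $\mathcal{R}(0)=0$), so optimality reads $\sum_m \|\Phi_m^* f_m^\star\|_1 + \lambda \mathcal{R}(w) \leq \sum_m \|\Phi_m^* f_m\|_1$. Feeding this into the chain of inequalities \eqref{A4}--\eqref{A5} from Proposition \ref{Pr1} and invoking $\delta$-relative sparsity together with the reverse triangle inequality produces $\mathbb{T} \leq 2S + 2\delta - \lambda \mathcal{R}(w)$.

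The second ingredient is the inside-cluster estimate for $S$, carried out without the relation $\sum z_m \in \mathcal{H}_M$ that powered Proposition \ref{Pr1}. Split $z_m = P_K z_m + P_M z_m$; the $P_M$-part is handled verbatim by Lemma \ref{Lm1}(ii) and contributes $\mu_{c,N}^{inp}\,\mathbb{T}$. For the $P_K$-part, use the identity $P_K z_m = w - \sum_{n \neq m} P_K z_n$: the $\sum_{n \neq m}$ portion is attacked as in the proof of Lemma \ref{Lm1}(i) through the dual-frame expansion $z_n = \Phi_n^d \Phi_n^* z_n$ and cluster coherence, and the $w$-portion is bounded using \eqref{EQ1} with $z = w$, producing $\sum_m \|\mathds{1}_{\Lambda_m} \Phi_m^* w\|_1 \leq \lambda \mathcal{R}(w)$. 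Packaging both contributions through the max-over-$m$ argument of Lemma \ref{Lm1}(iii) yields $S \leq \mu_{c,N}\,\mathbb{T} + \lambda \mathcal{R}(w)$. Substituting into the outside-cluster estimate, the regularizer terms are designed to cancel, leaving $\mathbb{T} \leq 2\mu_{c,N}\,\mathbb{T} + 2\delta$, which rearranges to the claim under $\mu_{c,N} < 1/2$.

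The decisive technical point --- and the reason that hypothesis \eqref{EQ1} is posited with the constant $\lambda$ on its right-hand side --- is precisely that the $+\lambda \mathcal{R}(w)$ paid when invoking \eqref{EQ1} to control the $w$-piece of $S$ must cancel against the $-\lambda \mathcal{R}(w)$ extracted from Algorithm \ref{AL2-3007}'s optimality. Tracking this cancellation carefully through the combined max-over-$m$ step of Lemma \ref{Lm1}(iii), so that the $P_M$-self and $P_K$-cross contributions bundle into a single factor of $\mu_{c,N}$ rather than the looser $\mu_{c,N}^{sep} + \mu_{c,N}^{inp}$, is the main obstacle; once it is secured, the remainder of the argument is routine algebra.
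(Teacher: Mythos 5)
Your proposal reproduces the paper's own proof essentially step for step: the same splitting $z_m = P_K z_m + P_M z_m$ with $P_K z_m = P_K z - \sum_{n\neq m} P_K z_n$, the same cluster-coherence bounds on the cross terms and the $P_M$-self terms, the same use of the optimality of Algorithm \ref{AL2-3007} to extract $-\lambda\mathcal{R}(P_K z)$, and the same invocation of \eqref{EQ1} to absorb the residual $\sum_m \| \mathds{1}_{\Lambda_m}\Phi_m^* P_K z\|_1$. One caveat you share with the paper: because the inside-cluster sum enters the final estimate with a factor $2$, the claimed cancellation actually leaves $+\lambda\mathcal{R}(P_K z)$ rather than $0$ unless \eqref{EQ1} is read with an extra factor $2$ (or the constant is tracked explicitly); this is a bookkeeping point common to both arguments, not a difference of method.
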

\begin{proof}
For $1 \leq m,n \leq N$, we set $z_m= f^\star_m - f_m, z=\sum_{m=1}^Nz_m$ and $\alpha_{m}:=\Phi_m^*z_m$ . For $\Phi_m^d \in \mathbb{D}_{\Phi},$ we have
\begin{equation}
    \Phi_m^d \alpha_{m} = \Phi_m^d \Phi_m^*z_m =z_m, \label{B0}
\end{equation}
and
\begin{eqnarray} 
\sum_{m=1}^N \|  f_m^\star - f_m \|_2 &=&  \sum_{m=1}^N  \| \Phi_m^*(f^\star_m - f_m ) \|_2 \nonumber \\
&\leq &  \sum_{i=1}^N \| \Phi_m^*(z_m) \|_1  : = \mathbb{T}.  
\end{eqnarray}
For each $m \in \{ 1, 2, \dots, N \} $, we observe that
\begin{eqnarray}
\|\mathds{1}_{\Lambda_m} \Phi_m^*z_m \|_1  &\leq & \|\mathds{1}_{\Lambda_m} \Phi_m^*P_Kz_m \|_1  +  \|\mathds{1}_{\Lambda_m} \Phi_m^* P_Mz_m \|_1  \nonumber \\
& \leq &  \sum_{n \neq m} \|\mathds{1}_{\Lambda_m} \Phi_m^*P_Kz_n \|_1 +  \|\mathds{1}_{\Lambda_m} \Phi_m^*P_Kz \|_1  + \|\mathds{1}_{\Lambda_m} \Phi_m^*P_Mz_m \|_1. \nonumber \\ \label{B2}
\end{eqnarray}
By \eqref{B0}, we have 
\begin{eqnarray}
\|\mathds{1}_{\Lambda_m} \Phi_m^*P_Kz_n \|_1 & = & \|\mathds{1}_{\Lambda_m} \Phi_m^*P_K \Phi_n^d \alpha_{n} \|_1  
 =  \|\mathds{1}_{\Lambda_m} (P_K\Phi_m)^* \Phi_n^d \alpha_{n} \|_1 \nonumber  \\
& \leq & \sum_{i \in \Lambda_m} \Big( \sum_j | \langle P_K\Phi_{mi},  \Phi_{nj}^d \rangle | |(\alpha_{n})_j| \Big)   \nonumber \\
& =& \sum_{j} \Big( \sum_{i \in \Lambda_m}| \langle P_K\Phi_{mi}, \Phi_{nj}^d \rangle | \Big) |(\alpha_{n})_j|   \nonumber \\
& \leq & \mu_c(\Lambda_m, P_K \Phi_m; \Phi_n^d ) \|\alpha_n \|_1 
=  \mu_c(\Lambda_m, P_K \Phi_m; \Phi_n^d ) \|\Phi_n^*z_n \|_1. \nonumber \\ \label{B3}
\end{eqnarray}

Similarly, we have
\begin{equation}
    \|\mathds{1}_{\Lambda_m} \Phi_m^*P_Mz_m \|_1 \leq  \mu_c(\Lambda_m, P_M \Phi_m; \Phi_m^d ) \|\Phi_m^* z_m \|_1. \label{B4}
\end{equation}
Combining \eqref{B2}, \eqref{B3} and \eqref{B4}, we obtain
\begin{eqnarray}
\sum_{m=1}^N \|\mathds{1}_{\Lambda_m} \Phi_m^*z_m \|_1 &\leq & \sum_{m=1}^N \Big ( \sum_{n \neq m} \mu_c(\Lambda_m, P_K \Phi_m; \Phi_n^d ) \| \Phi_n^* z_n \|_1 +  \nonumber \\
&&  
   \mu_c(\Lambda_m, P_M \Phi_m; \Phi_m^d) \| \Phi_m^* z_m \|_1 \Big )+ \sum_{m=1}^N \|\mathds{1}_{\Lambda_m} \Phi_m^*P_Kz \|_1 \nonumber \\
& \leq & \max_{n} \Big ( \mu_c(\Lambda_m, P_M \Phi_m; \Phi_m^d )  + \sum_{m \neq n}\mu_c(\Lambda_m, P_K \Phi_m; \Phi_n^d ) \Big ) \mathbb{T} + \nonumber \\
&&  \sum_{m=1}^N \|\mathds{1}_{\Lambda_m} \Phi_m^*P_Kz \|_1.\label{B5}
\end{eqnarray}

By triangle inequality, we have
\begin{eqnarray}
\mathbb{T} &= & \sum_{m=1}^N\| \mathds{1}_{\Lambda_m} \Phi_m^* z_m \|_1 +\sum_{m=1}^N \| \mathds{1}_{\Lambda_m^c} \Phi_m^* (f^\star_m - f_m) \|_1 \nonumber \\
& \leq & \sum_{m=1}^N\| \mathds{1}_{\Lambda_m} \Phi_m^* z_m \|_1 +\sum_{m=1}^N \| \mathds{1}_{\Lambda_m^c} \Phi_m^* f^\star_m  \|_1 + \delta \nonumber \\
& = & \sum_{m=1}^N\| \mathds{1}_{\Lambda_m} \Phi_m^* z_m \|_1 +\sum_{m=1}^N \| \Phi_m^* f^\star_m  \|_1 - \sum_{m=1}^N \|\mathds{1}_{\Lambda_m} \Phi_m^* f^\star_m  \|_1 + \delta  \nonumber \\
& \leq & \sum_{m=1}^N\| \mathds{1}_{\Lambda_m} \Phi_m^* z_m \|_1 +\sum_{m=1}^N \| \Phi_m^* f^\star_m  \|_1 +  \nonumber \\
&&
\sum_{m=1}^N \Big (\|\mathds{1}_{\Lambda_m} \Phi_m^* z_m  \|_1 -\|\mathds{1}_{\Lambda_m} \Phi_m^* f_m  \|_1 \Big ) + \delta. 
\label{B6} 
\end{eqnarray}
We note that $(f_1^\star, f^\star_2, \dots, f_N^\star) $ is a minimizer of $\textup{Algorithm \ref{AL1-3007}}$. Thus
\begin{equation}
\sum_{m=1}^N \| \Phi_m^* f^\star_m \|_1 \leq  \sum_{m=1}^N \| \Phi_m^* f_m \|_1 -\lambda \|P_Kz\|_1. \label{B7}
\end{equation}
By  \eqref{B5}, \eqref{B6}, \eqref{B7}, and \eqref{EQ1}, we have
\begin{eqnarray*}
\mathbb{T} & \leq  & 2 \sum_{m=1}^N\| \mathds{1}_{\Lambda_m} \Phi_m^* z_m \|_1 - \lambda \|P_Kz \|_1 + 2 \delta \nonumber \\
& \leq & 2\max_{n} \Big ( \mu_c(\Lambda_m, P_M \Phi_m; \Phi_m^d )  + \sum_{m \neq n}\mu_c(\Lambda_m, P_K \Phi_m; \Phi_n^{d}) \Big ) \mathbb{T} + \nonumber \\
&&  \Big ( \sum_{m=1}^N \|\mathds{1}_{\Lambda_m} \Phi_m^*P_Kz \|_1 - \lambda \| P_Kz \|_1 \Big ) + 2 \delta \nonumber \\
& \leq & 2\max_{n} \Big ( \mu_c(\Lambda_m, P_M \Phi_m; \Phi_m^d )  + \sum_{m \neq n}\mu_c(\Lambda_m, P_K \Phi_m; \Phi_n^d ) \Big ) \mathbb{T} + 2 \delta. \nonumber
\end{eqnarray*} 
Thus, we finally obtain $$\mathbb{T} \leq \frac{2 \delta}{1 - 2  \mu_{c, N} }.$$
This concludes the claim.
\end{proof}

In our analysis, we will later provide an analysis of Algorithm \ref{AL2-3007} with $\mathcal{R}(x) = \|x\|_1$ for separating points from curves and texture while inpainting the missing parts of curves and texture in Section \ref{0208-10}. For this, we have the following corollary.
\begin{Cor} \label{CR1}
Let $\Phi_1, \Phi_2, \dots, \Phi_N$ be a set of Parseval frames. For $  \delta_1, \delta_2, \dots, \delta_N \allowbreak > 0,$ we fix $\delta= \sum_{m=1}^N\delta_m,$ and suppose that $f \in L^2(\mathbb{R}^2)$ can be decomposed as $x= \sum_{m=1}^N f_m$ so that each component $f_1, f_2, \dots, f_N$ is $\delta_1, \delta_2, \dots, \delta_N-$relatively sparse in $\Phi_1, \Phi_2, \dots, \Phi_N$, respectively. Let $(f_1^\star, f_2^\star, \dots, f_N^\star)$ solve Algorithm \ref{AL2-3007} with $\mathcal{R}(x) = \|x\|_1$, and we assume that $\Lambda_1, \Lambda_2, \dots, \Lambda_N$ satisfy
\begin{equation} \label{2006-5}
    \sum_{1 \leq m \leq N} \| \mathds{1}_{\Lambda_m} \Phi_m z \|_1 \leq \lambda \| z \|_1, \;  \forall z \in L^2(\mathbb{R}^2).
\end{equation} 
If  $\mu_{c,N}< \frac{1}{2}$, then we have 
 \begin{equation} \label{EQ760}
\sum_{m=1}^N \| f_m^\star - f_m \|_2 \leq \frac{2\delta}{1 - 2 \mu_{c,N}}.
\end{equation}
\end{Cor}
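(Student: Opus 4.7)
The plan is to observe that Corollary \ref{CR1} is essentially a direct specialization of Theorem \ref{2006-4} to the regularizer $\mathcal{R}(x) = \|x\|_1$. Once one verifies that the hypotheses of Theorem \ref{2006-4} are met in this special case, the conclusion \eqref{EQ760} follows from \eqref{EQ76} with no further work.

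First I would note that Parseval frames are in particular frames with frame bounds $(1,1)$, so the abstract frame-theoretic hypothesis on $\Phi_1,\dots,\Phi_N$ in Theorem \ref{2006-4} is satisfied. The relative sparsity hypothesis is inherited verbatim. The key compatibility check is that the regularizer bound \eqref{EQ1}, namely
\[
    \sum_{1 \leq m \leq N} \| \mathds{1}_{\Lambda_m} \Phi_m z \|_1 \leq \lambda \mathcal{R}(z), \quad \forall z \in \mathcal{H},
\]
reduces, upon substituting $\mathcal{H} = L^2(\mathbb{R}^2)$ and $\mathcal{R}(z) = \|z\|_1$, to exactly the assumption \eqref{2006-5} of the corollary. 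Finally the cluster coherence condition $\mu_{c,N} < \tfrac12$ is identical in both statements.

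Given that all the hypotheses of Theorem \ref{2006-4} are verified, invoking the theorem yields
\[
\sum_{m=1}^N \| f_m^\star - f_m \|_2 \leq \frac{2\delta}{1 - 2 \mu_{c, N}},
\]
which is precisely \eqref{EQ760}. There is no genuine obstacle here; the only subtle point worth mentioning is that one must interpret $\|z\|_1$ for $z \in L^2(\mathbb{R}^2)$ in the sense compatible with $\mathcal{R}$ being well-defined and positive on $\mathcal{H}$, so that \eqref{EQ1} is meaningful. Provided the reader is willing to read the hypothesis \eqref{2006-5} as imposing precisely this compatibility on the cluster sets $\Lambda_m$, the corollary is an immediate specialization and its proof is a one-line reduction to Theorem \ref{2006-4}.
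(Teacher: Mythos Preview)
Your proposal is correct and matches the paper's approach: the paper states Corollary \ref{CR1} immediately after Theorem \ref{2006-4} without any separate proof, treating it as the direct specialization $\mathcal{R}(x)=\|x\|_1$ that you describe. Your verification that the hypotheses of Theorem \ref{2006-4} are inherited is exactly what the reader is implicitly expected to check.
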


\section{Inpainting and denoising of multiple component images in applications} \label{0208-10}

In image processing, the performance of existing algorithms is effective in different scenarios depending on the geometry of  underlying components. Natural images may contain more than two geometric structures, for instance, astronomical images are often composed of points, curves, and texture. One interesting question in application is how to inpaint an image which is composed of multiple components the known part of the  groundtruth image $f \in \mathcal{H}=L^2(\mathbb{R}^2).$ 
Mathematically, we assume that we have the following decomposition
\begin{equation}
    f=\mathcal{P}+\mathcal{C} + \mathcal{T}, 
\end{equation}
where $f$ is the signal of our interest, $\mathcal{P}, \mathcal{C},$ and $ \mathcal{T} $ are three underlying components that are unknown to us. In addition, we assume that $\mathcal{H}$ can be decomposed into the known and unknown subspace by $\mathcal{H} = \mathcal{H}_K \oplus \mathcal{H}_M$. Our goal is to recover $\mathcal{P}, \mathcal{C}$ and $\mathcal{T}$ with only the known part $P_kf$.
\subsection{Model of components}
In this subsection, we introduce models of pointlike, curvelike, and  texture structures which we aim to recover. In our analysis, the model of texture is motivated  from \cite{14} with $s=1.$ We modified the model of texture for our need as follows.
\begin{definition} 
Let $g \in L^2(\mathbb{R}^2)$ be a window with $\hat{g} \in C^{\infty}(\mathbb{R}^2),$ and frequency support $\supp \hat{g} \in [-1,1]^2$ satisfying the partition of unity condition
\begin{equation} 
     \sum_{n \in \mathbb{Z}^2}|\hat{g}(\xi +n)|^2=1, \quad \xi \in \mathbb{R}^2. 
\end{equation}
 Let $I_T \subseteq \mathbb{Z}^2$ be a subset of Fourier elements. A  texture is defined by
\begin{equation} 
\mathcal{T}(x) = \sum_{n \in I_T} d_{n} g(x) e^{2 \pi i x^\top n },
\end{equation}
where $(d_{n})_{n \in \mathbb{Z}^2}$ denotes  a bounded sequence of complex numbers.
\end{definition}

Point-like structure $\mathcal{P}$ is modeled as 
\begin{equation} \label{EQ60}
    \mathcal{P}(x)= \sum_{1}^K | x- x_i |^{-3/2}.
\end{equation}
Since it is certainly impossible to reconstruct  point singularities in the missing part, so we assume that $x_i \in \mathcal{H}_K, \forall i=1,2,\dots, K.$

Curvelike structure is modeled as a line distribution $w \mathcal{L}$ acting on \emph{Schwartz functions} by
\begin{equation} \label{EQ61}
\langle w \mathcal{L} , f \rangle = \int_{-\rho}^{\rho}  w(x_1) f(x_1, 0) dx_1, \; f \in \mathcal{S}(\mathbb{R}^2).
\end{equation} 
 
\subsection{Sparse representations}
In this section, we focus on tools of harmonic analysis for sparsely approximate each component. In particular, we use the following sparsifying systems which might be best adapted to the geometry of its singularities. 
\begin{itemize}
 \item Gabor tight frames: A tight frame with time-frequency balanced elements.
    \item Radial wavelets: Bandlimited wavelets which form a Parseval frame of isotropic generating elements.
    \item Shearlets: A highly directional tight frame with increasingly anisotropic elements at fine scales.
   
\end{itemize}
We now fix a constant $\epsilon$ satisfying
\begin{equation}
    0< \epsilon < \frac{1}{4}. \label{0907-1}
\end{equation}
We consider the  following Gabor tight frame $\mathbf{G}= \{ g_{m,n}(x) \}_{m,n \in \mathbb{Z}^2}$  defined in the frequency domain as
\begin{equation} \label{EQ0}
   \hat{g}_{m,n}(\xi) = \hat{g}(\xi - n) e^{2 \pi i  \xi^\top \frac{m}{2}}.  
\end{equation}
This system constitutes a tight frame for $L^2(\mathbb{R} ^2)$, see \cite{1} for more details.
Different from \cite{14} where two conditions on th sparse set of indexes $I_T$ were used, we assume that  
\begin{equation} \label{1907}
    | I_T^{\pm} \cap \mathcal{A}_{j} | \leq 2^{(1-\epsilon)j}  \quad \textup{as} \, j \rightarrow \infty . 
\end{equation} 
where 
   $ I_T^{\pm} = \{n^\prime \in \mathbb{Z}^2 \mid \exists n \in I_T: | n^\prime -n| \leq 1  \}.$ %$A+B:=\{ a+b \mid a\in A, b \in B \}.$ 
   Roughly speaking, we assume that at every scale $j$,  the number of non-zero Gabor elements with the same position, generating $\mathcal{T}_{j}$, is  not too large. We refer to Fig \ref{Fig1407} for an illustration.
\begin{figure}[H] 
 \centering
 \includegraphics[width=0.4\textwidth]{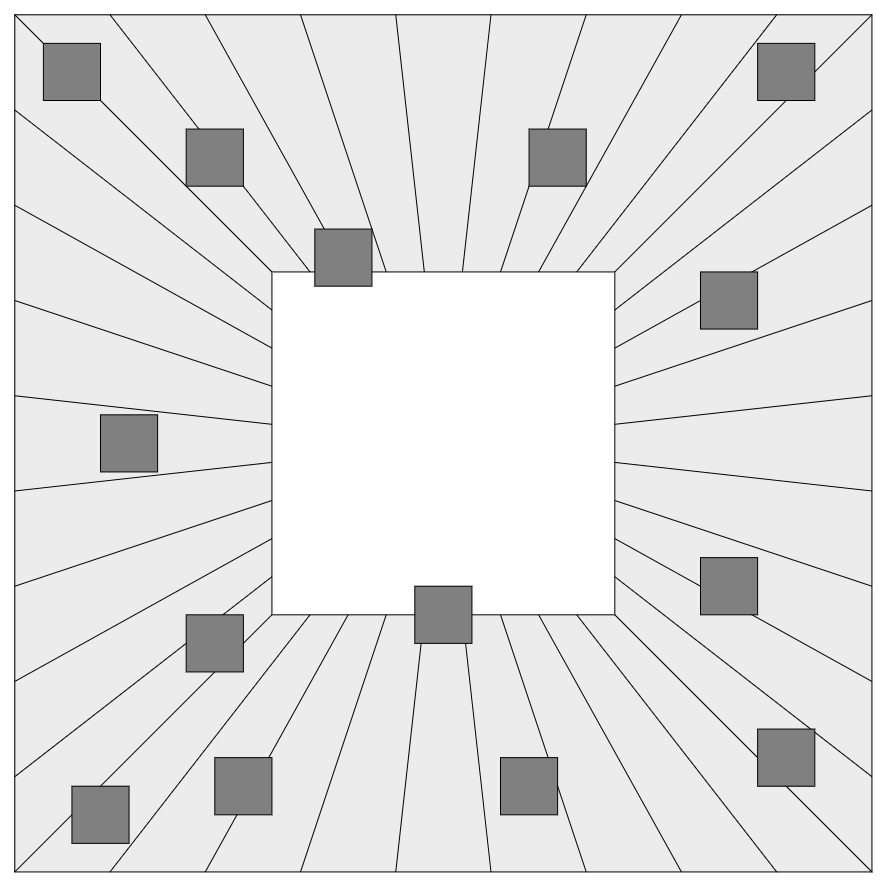}
\;
  \caption{\label{Fig1407} Interaction between Gabor elements (black) with wavelet and shearlet elements (gray)} 
 \end{figure}

For the construction of wavelets and shearlets, we choose smooth Parseval frames of shearlets \cite{19} and  a Parseval frame of wavelets which share the same window function.

Let $\Xi$ be a function in $\mathcal{S}(\mathbb{R})$ satisfying $0 \leq \hat{\Xi}(u) \leq 1$ for $u\in\mathbb{R}$, $\hat{\Xi}(u) =1$ for $u \in [-1/16, 1/16]$ and $\supp \hat{\Xi} \subset [-1/8, 1/8]$. Define the \emph{low pass function} $\Theta(\xi)$ and the \emph{corona scaling functions} for $j \in \mathbb{N} $ and $\xi= (\xi_1,\xi_2 ) \in \mathbb{R}^2,$
$$ \hat{\Theta}(\xi):= \hat{\Xi}(\xi_1) \hat{\Xi}(\xi_2),$$
\begin{equation} \label{CT35}
    W(\xi):= \sqrt{\hat{\Theta}^2(2^{-2}\xi)-\hat{\Theta}^2(\xi)}, \quad W_j(\xi):= W(2^{-2j} \xi).
\end{equation} 
It is easy to see that we have the partition of unity property
\begin{equation} \label{CT3}
 \hat{\Theta}^2(\xi) + \sum_{j \geq 0 } W_j^2(\xi) = 1, \quad \xi \in \mathbb{R}^2.
\end{equation}
    The smooth Parseval frame of  wavelets $\{ \phi_{j,k} \}_{j \in \mathbb{N}_0, k \in \mathbb{Z}^2}$ is  defined by
\begin{equation}
    \hat{\phi}_{j,k}=2^{-2j}W_j(\xi) e^{2 \pi i 2^{-2j} \xi^{\top} k}.
\end{equation}
Next, we use a bump-like function $ \upsilon  \in C^\infty (\mathbb{R})$ to produce the directional scaling feature of the system. Suppose $\supp(\upsilon) \subset [-1,1] $ and $| \upsilon (u-1)|^2 + |\upsilon(u)|^2 + |\upsilon(u+1)|^2 =1$ for $ u \in [-1,1]$. Define the \emph{horizontal frequency cone} and the \emph{vertical frequency cone}
\begin{equation} \label{EQ30}
\mathcal{C}_{\rm{h}} := \Big \{ (\xi_1, \xi_2) \in \mathbb{R}^2 \mid \Big | \phantom{.}  \frac{\xi_2}{\xi_1} \Big | \leq 1 \Big \} 
\end{equation}
and
\begin{equation} \label{EQ31}
     \mathcal{C}_{\rm{v}} := \Big \{ (\xi_2, \xi_1) \in \mathbb{R}^2 \mid  \Big  | \frac{\xi_1}{\xi_2} \Big | \leq 1 \Big \}.
\end{equation}
Define  the cone functions $V_{\rm{h}}, V_{\rm{v}}$ by
 \begin{equation} \label{EQ32}
 V_{\rm{h}}(\xi): = \upsilon \Big ( \frac{\xi_2}{\xi_1} \Big ), \quad V_{\rm{v}}(\xi): = \upsilon \Big ( \frac{\xi_1}{\xi_2} \Big ). 
 \end{equation}
The \emph{shearing} and \emph{scaling matrix} are defined by
\begin{equation} \label{EQ33}
A_{\rm{h}} := 
\begin{bmatrix}
    2^2   & 0 \\
      0  & 2 \\
   \end{bmatrix}, \quad S_{\rm{h}} := \begin{bmatrix}
    1  & 1 \\
      0  & 1 \\
   \end{bmatrix} ,
\end{equation}

\begin{equation} \label{EQ34}
 A_{\rm{v}} := 
\begin{bmatrix}
    2 & 0 \\
      0  & 2^2 \\
   \end{bmatrix}, \quad S_{\rm{v}} := \begin{bmatrix}
    1  & 0 \\
      1  & 1 \\
   \end{bmatrix}.
\end{equation}
\begin{definition} \label{DE10}  The smooth Parseval of shearlets $\mathbf{\Psi}= \{ \psi_{j,l,k}^{ \rm{\iota}} \}_{j,l,k}, j \in \mathbb{N}_0, l \in \mathbb{Z}, 0 \leq |l| \leq 2^{j}, k \in \mathbb{Z}^2, \rm{\iota} \in \{ \rm{v}, \rm{h} \} $ is defined by
\begin{equation}
     \hat{\psi}_{j,l,k}^{ \iota}(\xi): = 2^{-3j/2}W_j(\xi) V_{\iota} \Big ( \xi^T A_{ \iota}^{-j} S^{-l}_{\iota} \Big ) e^{-2\pi i \xi^T A_{ \iota}^{-j} S^{-l}_{\iota}k}, \quad \xi \in \mathbb{R}^2. 
\end{equation}
\end{definition}
\begin{figure}[H] 
\centering
\includegraphics[width=220pt, height=170pt]{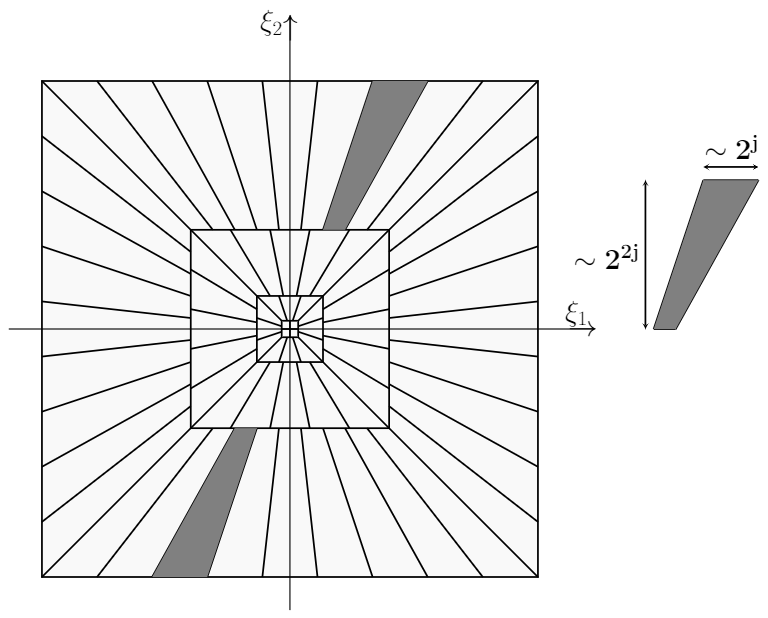}
\caption{\label{2607-1} Frequency tiling of a cone-adapted shearlet. } 
\end{figure} 
For an illustration, the tiling of the frequency domain induced by shearlets is depicted in Fig. \ref{2607-1}.

\subsection{Mutiscale approach}
In our analysis, the simultaneous separation and inpainting problem is analyzed at each scale $j$. To achieve this goal we use the following class of \emph{frequency filters} $F_j$ 
\begin{equation} \label{CT25}
   \hat{F}_j(\xi):=W_j(\xi) = W(\xi/2^{2j}), \quad \forall j \geq 0, \xi \in \mathbb{R}^2, 
\end{equation}
and in  low frequency part 
$$ \hat{F}_{low}(\xi):= \Theta(\xi), \quad \xi \in \mathbb{R}^2.$$
By using these filters, we can   decompose $f$ into small pieces  by $f_{(j)} = F_j * f, \; j \geq 0$ and $f_{low}= F_{low}*f$. Then \eqref{CT3} allows us to reconstruct $f$ by
\begin{equation} \label{0408-1}
    f  = F_{low}*f_{low}+  \sum_{j \geq 0} F_j * f_{(j)}.
\end{equation}
For multi-scale separation and inpainting, we intend to apply Algorithm \ref{AL1-3007}, Algorithm \ref{AL2-3007} for each sub-image $f_{(j)}$, then the whole image is reconstructed by $\eqref{0408-1}$. Here we assume that $f_{(j)}= \mathcal{P}_j + w\mathcal{L}_j + \mathcal{T}_j$. In the other words, instead of considering the whole signal we now consider the problem of simultaneous separation and inpainting at each scale. For the theoretical analysis, we assume that the model of missing subspace is chosen at each scale $j$ to be $\mathcal{H}_M=\mathds{1}_{\mathcal{M}_{h_j}}$ where $\mathcal{M}_{h_j}= \{ x= (x_1, x_2) \mid |x_1| \leq h_j \}$ is a horizontal strip domain. 
This model of missing part was first introduced in \cite{15} and then also adopted in \cite{7,18}. The orthogonal projection associated with the missing trip is then defined by\begin{equation} \label{1007-4}
P_{M,j} = \mathds{1}_{\{ |x_1| \leq h_j \}}.
\end{equation} For an illustration, we refer to Fig. \ref{Fig3007-30}.
We now face to the task of reconstructing the sub-image $f_{(j)}$ with the knowledge of  $P_{K,j}f_{(j)}$, where 
$P_{K,j} =1-P_{M,j}.$
\begin{figure}[H] 
 \centering
 \includegraphics[width=0.4\textwidth]{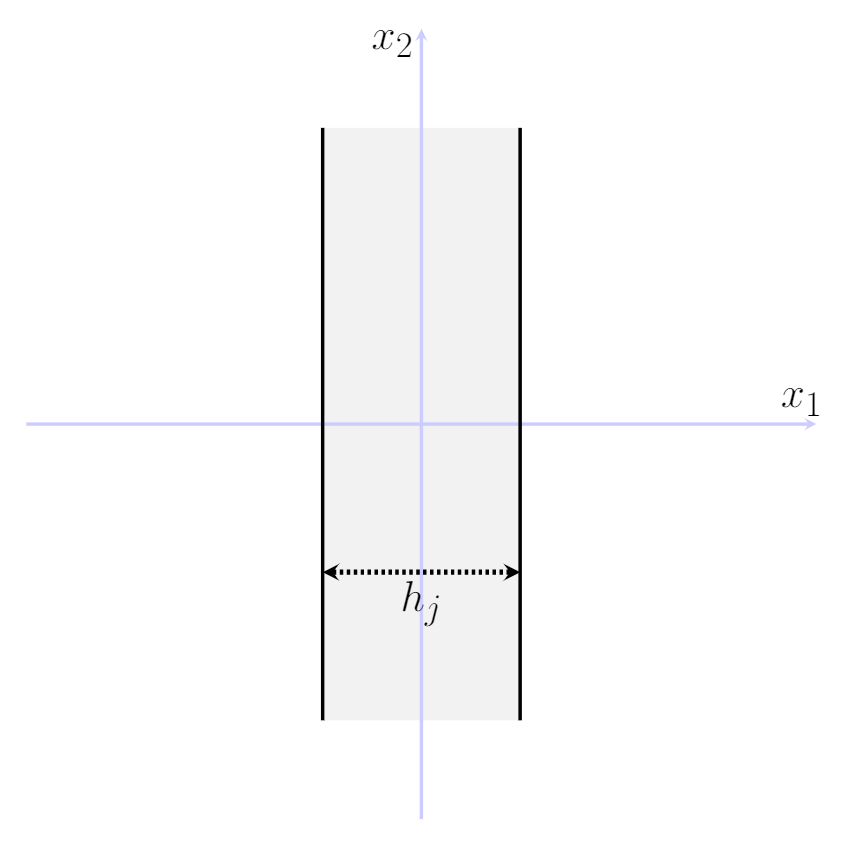}
\;
  \caption{\label{Fig3007-30}  Model of the missing trip at scale $j$.} 
 \end{figure} 
 
In addition, we assume that the energy of texture is comparable to point and curvilinear singularities, i.e., $\| \mathcal{P}_j \|_2 \approx \| w\mathcal{L}_j \|_2 \approx \| \mathcal{T}_j \|_2.$ This is due to the fact that the problem becomes trivial in the case of different levels of energy.  Actually, we can derive  similar theoretical results by our approach if the energy of each component has a lower bound at each scale. However, for the sake of simplicity, we assume that  $\| \mathcal{T}_j \|_2 \approx 2^{2j}$  since we have $\| \mathcal{P}_j \|_2 \approx \| w\mathcal{L}_j \|_2 \approx 2^{2j}.$ These energy balancing conditions make the problem of geometric separation  challenging at every scale.
 
\subsection{Main results}
Our analysis is based on the fact that the geometric property underlying each component is often encoded in the frame coefficients. In addition, important coefficients cluster geometrically in phase space by microlocal analysis. This therefore allows us to separate components based on the machinery in Theorem \ref{2006-3} and \ref{2006-4}. 
Thus, the correct choice of clusters plays a central role in our analysis.  We aim to define the optimal set of significant coefficients to derive good cluster sparsity as well as cluster coherence.  By Theorem \ref{2006-3} and \ref{2006-4}, if  clusters are chosen large we might have very a good approximation but the cluster coherence will  be big which leads to loose control of the $L_2$ errors. 

We now define clusters for wavelet, shearlet, and Gabor systems.
First the cluster of significant wavelet coefficients is defined by
\begin{equation}
   \Lambda_{1,j}^{\pm}:= \Lambda_{1, j-1} \cup \Lambda_{1,j} \cup \Lambda_{1, j+1}, \; \forall j \geq 1,
\end{equation}   
where $  \Lambda_{1,j} = \{ p \mid p \in \mathbb{Z}^2, |p| \leq 2^{\epsilon j} \}. $
For the line singularity $w\mathcal{L}_j$,  we  define the set of significant coefficients of shearlet system by 
\begin{equation} \label{CT6}
\Lambda_{2,j}^{\pm}:= \Lambda_{2,j-1} \cup \Lambda_{2,j} \cup \Lambda_{2,j+1}, \quad  \forall j \geq 2,
\end{equation}
where
$\Lambda_{2,j}: = \Big \{ (j,l,k;{\rm v}) \; | \; |l| \leq 1, k=(k_1, k_2) \in \mathbb{Z}^2, |k_2 - lk_1| \leq 2^{\epsilon j} \Big \}.$
We define the cluster for texture at scale $j$ to be
\begin{equation} \label{CT7}
\Lambda_{3,j}^\pm := \Big ( \mathbb{Z}^2 \cap B(0,M_j) \Big )  \times \Big ( I_T^{\pm} \cap \mathcal{A}_{j} \Big ),
\end{equation}
where $I_T^{\pm} = \{n^\prime \in \mathbb{Z}^2 \mid \exists n \in I_T: | n^\prime -n| \leq 1  \},$ $M_j:= 2^{\epsilon j/6}$ and $B(0,r)$ denotes the closed $l_2$ ball around the origin in $\mathbb{R}^2$.
Here we remark that  the index set $\Lambda_{1,j}^{\pm}, \Lambda_{2,j}^{\pm}, \Lambda_{,j}^{\pm}$ were chosen instead of $\Lambda_{1j}, \Lambda_{2,j}, \Lambda_{3,j}$ since  there have overlap supports between them and their neighbors.

For later use, let us now introduce some useful notations related to shearlets 
\begin{eqnarray} 
\langle |x| \rangle &:=& (1 + |x|^2)^{1/2} \nonumber \\
\Delta &:=& \Big \{(j,l,k; \rm{\iota}) \mid j \geq 0, |l| < 2^{j}, k \in \mathbb{Z}^2, \rm{\iota} \in \{ \rm{h}, \rm{v} \}  \Big \}  \nonumber \\
\Delta_j &:=& \{ (j^\prime,k,l;  \rm{\iota}) \in \Delta \mid j^\prime = j \}, \quad j \geq 0,  \nonumber\\
\Delta_j^{\pm} &:=& \Delta_{j-1} \cup \Delta_j \cup \Delta_{j+1}, \; \text{where } \Delta_{-1}=\emptyset.\nonumber
\end{eqnarray} 
By integration by parts and standard arguments, we have the decay estimates of Gabor, wavelet, and shearlet elements by the following lemma.
\begin{lemma} \label{0907-2}
For each $ N =1,2, \dots $ there exists a constant $C_N$ independent of $j$ such that the followings hold for $j \in \mathbb{N}, m,n,l,k, p \in \mathbb{Z}, |l| \leq 2^{j}$.
\begin{enumerate}[label=(\roman*)]
\item $ |g_{m,n}(x)| \leq C_N  \langle | x_1 + \frac{m_1}{2}|\rangle^{-N}\langle |x_2 + \frac{m_2}{2}|\rangle^{-N},$ 
\item $ |\phi_{j,p}(x)| \leq C_N \cdot 2^{2j} \cdot  \langle |2^{2j} x_1 + p_1|\rangle^{-N}\langle |2^{2j} x_2 + p_2|\rangle^{-N}.$
\item $ | \psi_{j, l, k}^{\rm{v}}(x)| \leq C_N \cdot 2^{3j/2} \cdot \langle |2^{j}x_1 - k_1|\rangle^{-N} \langle |2^{2j}x_2+ l2^{ j } x_1-k_2|\rangle^{-N}, $ \\
$ | \psi_{j, l, k}^{\rm{h}}(x)| \leq C_N \cdot 2^{3j/2} \cdot \langle |2^{2j}x_1+l2^{ j}x_2-k_1)|\rangle^{-N} \langle |2^{ j}x_2-k_2|\rangle^{-N}, $ 
\item  $ |\langle g_{m,n},  \phi_{j, p} \rangle|  \leq C_N \cdot 2^{-2j},$  \\
$ |\langle g_{m,n},  \psi_{j, l, k}^{ \rm{\iota}} \rangle|  \leq C_N \cdot 2^{-3j/2},$ \quad   $\forall 
 \rm{\iota \in \{ \rm{v}, \rm{h} \}},$ \\
 $ |\langle \phi_{j,p},  \psi_{j, l, k}^{ \rm{\iota}} \rangle|  \leq C_N \cdot 2^{-j/2},$ \quad   $\forall 
 \rm{\iota \in \{ \rm{v}, \rm{h} \}}.$
\end{enumerate}
\end{lemma}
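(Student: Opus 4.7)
The plan is to prove all four parts of the lemma by directly exploiting the frequency-domain definitions of the three atom families together with the Schwartz regularity and compact frequency support of the generators $g$, $W$, and $V_\iota$. Parts (i)--(iii) are spatial decay estimates, which I would obtain by inverting the Fourier transform explicitly and reading off Schwartz-type decay in appropriate coordinates; part (iv) collects three cross-system inner product bounds, which I would derive uniformly via Plancherel's identity combined with amplitude-times-area estimates on the frequency side.

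For (i), I would simply invert the Fourier definition to get $g_{m,n}(x) = e^{2\pi i n^T(x+m/2)} g(x+m/2)$, so the rapid decay of $g \in \mathcal{S}(\mathbb{R}^2)$ in each coordinate immediately yields the tensor bound. For (ii), the substitution $\eta = 2^{-2j}\xi$ in $\mathcal{F}^{-1}[\hat\phi_{j,p}]$ produces $\phi_{j,p}(x) = 2^{2j}\check W(2^{2j}x + p)$ up to sign conventions; since $W$ is smooth and compactly supported, $\check W \in \mathcal{S}(\mathbb{R}^2)$ and the required decay follows after rescaling.

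For (iii), I would perform the change of variables $\eta^T = \xi^T A_\iota^{-j} S_\iota^{-l}$ in the inverse Fourier transform of $\hat\psi_{j,l,k}^\iota$. After accounting for the Jacobian $2^{3j}$, the shearlet takes the canonical form
\[
\psi_{j,l,k}^\iota(x) \;=\; 2^{3j/2} \, \Psi^\iota \bigl( S_\iota^l A_\iota^j x - k \bigr),
\]
where $\Psi^\iota$ is the Schwartz function whose Fourier transform is the product of the pulled-back $W_j$ and the bump $V_\iota$. Reading off the two components of $S_v^l A_v^j x - k$ then produces the anisotropic arguments $2^j x_1 - k_1$ and $2^{2j} x_2 + l 2^j x_1 - k_2$ in the vertical-cone case, with the horizontal cone being symmetric.

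For (iv), I would apply Plancherel to rewrite each inner product in the frequency domain and then use the crude but uniform bound $|\langle \hat u, \hat v\rangle| \leq \|\hat u\|_\infty \|\hat v\|_\infty \, |\supp \hat u \cap \supp \hat v|$. The key inputs are $\|\hat g_{m,n}\|_\infty \leq 1$ with $|\supp \hat g_{m,n}| \leq 4$; $\|\hat \phi_{j,p}\|_\infty \leq C \cdot 2^{-2j}$; and $\|\hat \psi_{j,l,k}^\iota\|_\infty \leq C \cdot 2^{-3j/2}$ with $|\supp \hat \psi_{j,l,k}^\iota| \leq C \cdot 2^{3j}$, the latter a parallelogram of length $\sim 2^{2j}$ and width $\sim 2^j$ carved out by $V_\iota$. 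Bounding the intersection by the smaller of the two supports yields respectively $C \cdot 2^{-2j}$, $C \cdot 2^{-3j/2}$, and $C \cdot 2^{3j} \cdot 2^{-2j} \cdot 2^{-3j/2} = C \cdot 2^{-j/2}$. The main obstacle is the bookkeeping in (iii), namely pushing the shear matrix $S_\iota^l$ and the anisotropic dilation $A_\iota^j$ cleanly through the Fourier inversion, together with the companion area estimate for $\supp \hat \psi_{j,l,k}^\iota$ used in (iv); both reduce to tracking how $A_\iota^j S_\iota^l$ acts on the reference support of $W \cdot V_\iota$, and the rest is routine.
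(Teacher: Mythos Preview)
Your proposal is correct and matches the paper's approach, which simply invokes ``integration by parts and standard arguments'' without giving any further detail; what you have written is precisely an unfolding of those standard arguments. One minor caveat in (iii): after your change of variables the pulled-back corona window becomes $W\bigl(2^{-j}(\eta_1+l\eta_2),\eta_2\bigr)$, so the function you call $\Psi^\iota$ is really a $(j,l)$-dependent family rather than a single Schwartz function---but since $|l|\le 2^{j}$ and $V_\iota$ confines $|\eta_1|\le|\eta_2|\lesssim 1$, its support and all partial derivatives are bounded uniformly in $j,l$, which is exactly what you need for the constants $C_N$ to be independent of $j$.
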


We now present the following convergence result which shows the success of Algorithm $\ref{AL1-3007}$ in simultaneously  inpainting and separating three components of points, curves, and texture.

\begin{Th}  \label{3007-10}
For $0<h_j = o(2^{-(1 + \epsilon)j})$ with $\epsilon$ satisfying $0 < \epsilon <   \frac{1}{3},$
the recovery error provided by Algorithm \ref{AL1-3007} decays rapidly and we  have asymptotically perfect simultaneous separation and inpainting. Namely, for all $ N \in \mathbb{N}_0,$
\begin{equation} 
 \frac{\| \mathcal{P}_j^* - \mathcal{P}_j  \|_2}{\| \mathcal{P}_j\|_2}+ \frac{\| \mathcal{C}_j^* - w\mathcal{L}_j  \|_2}{\| w\mathcal{L}_j \|_2}+ \frac{\| \mathcal{T}_j^* - \mathcal{T}_{j}  \|_2}{ \| \mathcal{T}_{j} \|_2} = o(2^{-Nj}) \rightarrow 0, \quad j \rightarrow \infty,  \label{3007}
\end{equation}
where $(\mathcal{P}_j^*, \mathcal{L}_j^*, \mathcal{T}_j^*)$ is the solution of Algorithm $\ref{AL1-3007}$ and $(\mathcal{P}_j, \mathcal{C}_j, \mathcal{T}_{j})$ are ground truth components.  
\end{Th}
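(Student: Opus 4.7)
The plan is to invoke Theorem \ref{2006-3} applied to the scale-$j$ decomposition $f_{(j)} = \mathcal{P}_j + w\mathcal{L}_j + \mathcal{T}_j$ with the three systems $\Phi_1 = \mathbf{W}$ (wavelets), $\Phi_2 = \mathbf{\Psi}$ (shearlets), $\Phi_3 = \mathbf{G}$ (Gabor), and clusters $\Lambda_{1,j}^\pm$, $\Lambda_{2,j}^\pm$, $\Lambda_{3,j}^\pm$. Since all three systems are Parseval or tight, I may take the canonical dual $\Phi_m^d = \Phi_m$. Because the point locations $x_i$ lie in $\mathcal{H}_K$, the point component has no missing part, so the reduced bookkeeping of Remark \ref{3007-20} applies at index $i=1$. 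The conclusion \eqref{3007} will then follow from the error estimate $\sum_m \|f_m^\star - f_m\|_2 \leq 2\delta(j)/(1 - 2\mu_{c,3}(j))$ divided by the balanced energies $\|\mathcal{P}_j\|_2 \approx \|w\mathcal{L}_j\|_2 \approx \|\mathcal{T}_j\|_2 \approx 2^{2j}$, provided both $\delta(j)$ decays super-polynomially and $\mu_{c,3}(j) \to 0$ as $j \to \infty$.

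The first technical step is to verify cluster sparsity, i.e.\ to produce $\delta_m(j) = o(2^{-Nj})$ for every $N$. For each component I would estimate $\|\mathds{1}_{(\Lambda_{m,j}^\pm)^c} \Phi_m^* f_m\|_1$ using the fast spatial/phase-space decay supplied by Lemma \ref{0907-2}. For the wavelet--point pairing, the Riesz-type singularity $|x - x_i|^{-3/2}$ combined with the decay in Lemma \ref{0907-2}(ii) makes the coefficients $\langle \mathcal{P}_j, \phi_{j,p}\rangle$ vanish arbitrarily fast in $|p - 2^{2j} x_i|$, and $\Lambda_{1,j}^\pm$ captures a radius-$2^{\epsilon j}$ neighbourhood around each such index. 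For the shearlet--line pairing, the analysis parallels that of the classical points-versus-curves separation \cite{12}. For the texture--Gabor pairing, the Fourier support $\supp \hat g \subset [-1,1]^2$ together with \eqref{1907} restricts the relevant frequency indices to $I_T^\pm \cap \mathcal{A}_j$, and the spatial decay of $g$ (Lemma \ref{0907-2}(i)) handles the tail outside $B(0, M_j)$.

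The more delicate half is the cluster coherence. By Lemma \ref{Lm1}(iii) combined with Remark \ref{3007-20}, $\mu_{c,3}(j)$ is dominated by the three cross-coherences $\mu_c(\Lambda_{n,j}^\pm, P_K \Phi_n; \Phi_m)$ and the two self-coherences $\mu_c(\Lambda_{m,j}^\pm, P_M \Phi_m; \Phi_m)$ for $m \in \{2,3\}$. Using the sharper definition \eqref{CT300} (sum inside the inner product) together with the per-pair decay estimates of Lemma \ref{0907-2}(iv) (of orders $2^{-2j}$, $2^{-3j/2}$, $2^{-j/2}$ depending on the pair), and weighting by the polynomially-growing cluster sizes, each cross term tends to zero. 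For the self-missing-strip coherences, the crucial input is the width $h_j = o(2^{-(1+\epsilon)j})$: a shearlet at scale $j$ has $x_1$-width $\sim 2^{-j}$ and a Gabor atom has unit width, so integrating their $L^\infty$ bounds from Lemma \ref{0907-2}(i),(iii) against $\mathds{1}_{|x_1| \leq h_j}$ produces strip-masses of order $(2^j h_j)^{1/2}$ and $h_j^{1/2}$ respectively, which shrink with $j$.

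The main obstacle will be the shearlet--Gabor cross-coherence, since the Gabor cluster is the largest (up to $M_j^2 \cdot 2^{(1-\epsilon)j}$ elements), and only Gabor atoms with $|n| \sim 2^{2j}$ can interact non-trivially with a scale-$j$ shearlet. A careful phase-space count is needed to avoid losing the $\mu_{c,3}(j) < \tfrac{1}{2}$ threshold; this is precisely where the hypothesis $\epsilon < \tfrac{1}{3}$ enters, making the exponent in the cluster-size times per-pair bound negative. Once $\mu_{c,3}(j) \to 0$ is secured and combined with the super-polynomial decay of $\delta(j)$, dividing the Theorem \ref{2006-3} estimate by $\|\mathcal{P}_j\|_2 \sim 2^{2j}$ delivers \eqref{3007}.
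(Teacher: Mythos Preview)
Your proposal is correct and follows essentially the same route as the paper: apply Theorem~\ref{2006-3} at each scale $j$ (with Remark~\ref{3007-20} for the point component), verify rapid decay of the relative-sparsity tails $\delta_{m,j}$ via the phase-space decay of Lemma~\ref{0907-2}, and drive $\mu_{c,3}\to 0$ by bounding the cross- and self-coherences term by term (this is exactly Propositions~\ref{2107-20}--\ref{2107-24} in the paper). Two small slips to fix when you write it out: the missing-strip self-coherences scale \emph{linearly} in $h_j$ (e.g.\ $\mu_c(\Lambda_{3,j},P_{M,j}\mathbf{G};\mathbf{G})\lesssim 2^{j}h_j$), not like $h_j^{1/2}$; and the binding constraint on $\epsilon$ actually arises from the wavelet--shearlet cross term $\mu_c(\Lambda_{1,j}^\pm,\mathbf{\Phi};\mathbf{\Psi})\lesssim 2^{-(1-4\epsilon)j/2}$ (Proposition~\ref{2107-21}(i)), not from the shearlet--Gabor pairing.
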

For noisy images, we have the following result.
\begin{Th}  \label{3007-11}
For $0<h_j = o(2^{-(1 + \epsilon)j})$ with $\epsilon$ satisfying $0 < \epsilon <   \frac{1}{3},$
the recovery error provided by Algorithm \ref{AL2-3007} decays rapidly and we  have asymptotically perfect simultaneous separation and inpainting,
\begin{equation} 
 \frac{\| \mathcal{P}_j^* - \mathcal{P}_j  \|_2}{\| \mathcal{P}_j\|_2}+ \frac{\| \mathcal{C}_j^* - w\mathcal{L}_j  \|_2}{\| w\mathcal{L}_j \|_2}+ \frac{\| \mathcal{T}_j^* - \mathcal{T}_{j}  \|_2}{ \| \mathcal{T}_{j} \|_2} = o(2^{-Nj}) \rightarrow 0, \quad j \rightarrow \infty,  \label{3007-100}
\end{equation}
where $(\mathcal{P}_j, \mathcal{C}_j, \mathcal{T}_{j})$ are ground truth components and $(\mathcal{P}_j^*, \mathcal{L}_j^*, \mathcal{T}_j^*)$ is the solution of Algorithm $\ref{AL2-3007}$ with $\mathcal{R}(z)=\| z \|_1, \lambda_j = 2^{2j}.$   
\end{Th}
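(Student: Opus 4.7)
The plan is to invoke Corollary \ref{CR1} at each scale $j \geq 0$ with the three dictionaries $\Phi_1$ (wavelets), $\Phi_2$ (shearlets), $\Phi_3$ (Gabor) and the clusters $\Lambda_{1,j}^{\pm}, \Lambda_{2,j}^{\pm}, \Lambda_{3,j}^{\pm}$, treating the sub-image $f_{(j)} = \mathcal{P}_j + w\mathcal{L}_j + \mathcal{T}_j$ independently at each scale. Since the point component lives entirely in $\mathcal{H}_K$ by \eqref{EQ60} and the surrounding discussion, the wavelet block is handled via Remark \ref{3007-20} (setting $P_M \Phi_1 = 0$). Once the three hypotheses of Corollary \ref{CR1} are verified, the error bound \eqref{EQ760} combined with the energy normalization $\|\mathcal{P}_j\|_2 \approx \|w\mathcal{L}_j\|_2 \approx \|\mathcal{T}_j\|_2 \approx 2^{2j}$ yields \eqref{3007-100}.

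Two of the three hypotheses, namely (a) the relative sparsity estimates $\delta_m(j) = o(2^{(2-N)j})$ for every $N \in \mathbb{N}$, and (b) the cluster coherence bound $\mu_{c,3}(j) \to 0$ as $j \to \infty$, are exactly the estimates used in the proof of the noiseless Theorem \ref{3007-10} under the same hypotheses $0 < \epsilon < 1/3$ and $h_j = o(2^{-(1+\epsilon)j})$. They follow from the decay estimates of Lemma \ref{0907-2}, the width assumption on the missing strip, and the Gabor sparsity assumption \eqref{1907} on $I_T$. I would import them verbatim rather than repeat the argument.

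The new ingredient, and what I expect to be the main obstacle, is to verify the regularization condition \eqref{2006-5} with $\lambda_j = 2^{2j}$, that is,
\[
\sum_{m=1}^{3} \bigl\| \mathds{1}_{\Lambda_{m,j}^{\pm}} \Phi_m^* z \bigr\|_1 \leq 2^{2j} \|z\|_{L^1}, \qquad \forall z \in L^2(\mathbb{R}^2).
\]
The approach is the pointwise duality bound
\[
\sum_m \bigl\| \mathds{1}_{\Lambda_{m,j}^{\pm}} \Phi_m^* z \bigr\|_1 \leq \|z\|_{L^1} \cdot \sup_{x \in \mathbb{R}^2} \sum_{m=1}^{3} \sum_{i \in \Lambda_{m,j}^{\pm}} |\phi_{mi}(x)|,
\]
reducing the problem to bounding $S_m(x) := \sum_{i \in \Lambda_{m,j}^{\pm}} |\phi_{mi}(x)|$ uniformly in $x$. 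For wavelets, Lemma \ref{0907-2}(ii) gives $|\phi_{j,p}(x)| \lesssim 2^{2j}\langle 2^{2j} x_1 + p_1\rangle^{-N}\langle 2^{2j} x_2 + p_2\rangle^{-N}$; since the translates sum to $O(1)$ in each coordinate, $S_1(x) \lesssim 2^{2j}$. For shearlets, Lemma \ref{0907-2}(iii) together with $|l| \leq 1$ and the constraint $|k_2 - l k_1| \leq 2^{\epsilon j}$ defining $\Lambda_{2,j}^{\pm}$ gives, after summing the rapid-decay factors over $k_1$ and the compatible $k_2$, $S_2(x) \lesssim 2^{3j/2}$. For Gabor, the identity $|g_{m,n}(x)| = |g(x + m/2)|$ decouples the $n$-sum, giving $S_3(x) \lesssim |I_T^{\pm} \cap \mathcal{A}_j| \cdot \sum_m |g(x + m/2)| \lesssim 2^{(1-\epsilon)j}$ by \eqref{1907} and the Schwartz decay of $g$. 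Summing, $S_1 + S_2 + S_3 \lesssim 2^{2j}$, absorbing the constant into $\lambda_j$.

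With all three hypotheses verified, Corollary \ref{CR1} yields $\sum_{m=1}^{3} \|f_m^\star - f_m\|_2 \leq \frac{2\delta_j}{1 - 2\mu_{c,3}(j)}$ where the numerator decays like $o(2^{(2-N)j})$ and the denominator tends to $1$; dividing by $\|\mathcal{T}_j\|_2 \approx 2^{2j}$ produces \eqref{3007-100}. The delicate bookkeeping lies in carrying the decay constants of Lemma \ref{0907-2} through the three adjacent scales $j-1, j, j+1$ that constitute each $\Lambda_{m,j}^{\pm}$, but this only inflates constants and not the asymptotic rates.
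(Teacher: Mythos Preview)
Your proposal is correct and follows essentially the same route as the paper. The paper likewise reduces Theorem~\ref{3007-11} to the estimates already established for Theorem~\ref{3007-10} (relative sparsity via Proposition~\ref{2107-20}, cluster coherence via Propositions~\ref{2107-21}--\ref{2107-24}) and then verifies the regularization condition \eqref{2006-5} by exactly your pointwise argument: using the decay bounds of Lemma~\ref{0907-2} to control $\sum_{i\in\Lambda_{m,j}^{\pm}}|\phi_{mi}(x)|$ uniformly in $x$, obtaining the same three rates $2^{2j}$, $2^{3j/2}$, $2^{(1-\epsilon)j}$ for wavelets, shearlets, and Gabor respectively.
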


 These two theorems show that the $L_2$ errors of the reconstruction by Algorithm \ref{AL1-3007} and  \ref{AL2-3007}
can be made arbitrarily small in the limit as the scale tends to zero.
We would like to remind the reader that by Remark \ref{3007-21} we need to separate each pair of components while inpaint each of them to  guarantee the success of our algorithms. The proof of  two main theorems relies on a number of estimates  based on the general theoretical guarantee with respect to Remark  \ref{3007-21}.

Before giving a proof for this theorem, we consider the following materials.
\subsection{Estimate of  approximation errors}
\begin{lemma} \label{3007-1}
We have $\delta_{1j} = \circ(2^{-Nj}), \quad j \rightarrow \infty.$
\end{lemma}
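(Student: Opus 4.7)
By definition, $\delta_{1j}$ is the $\ell_1$--mass of the wavelet coefficients of $\mathcal{P}_j$ that lie outside the significance cluster, i.e.\
\[
\delta_{1j} = \| \mathds{1}_{(\Lambda_{1,j}^{\pm})^c} \Phi_1^* \mathcal{P}_j \|_1 = \sum_{p \notin \Lambda_{1,j}^\pm} \bigl| \langle \mathcal{P}_j, \phi_{j,p}\rangle \bigr|.
\]
The plan is to compute each wavelet coefficient explicitly on the Fourier side, extract a rescaled Fourier transform of a Schwartz function, and then exploit its super-polynomial decay when summed against the complement of a ball of radius $\sim 2^{\epsilon j}$.

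\textbf{Step 1: Fourier representation of the coefficients.} Since $\widehat{\mathcal{P}_j}(\xi) = W_j(\xi)\hat{\mathcal{P}}(\xi)$ and $\hat{\phi}_{j,p}(\xi) = 2^{-2j} W_j(\xi)\, e^{2\pi i 2^{-2j} \xi^\top p}$, Plancherel gives
\[
\langle \mathcal{P}_j, \phi_{j,p}\rangle \;=\; 2^{-2j}\int W_j(\xi)^2 \,\hat{\mathcal{P}}(\xi)\, e^{-2\pi i 2^{-2j} \xi^\top p}\, d\xi.
\]
Homogeneity of $|x|^{-3/2}$ in $\mathbb{R}^2$ yields $\hat{\mathcal{P}}(\xi)= c \sum_{i=1}^K e^{-2\pi i \xi^\top x_i} |\xi|^{-1/2}$. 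A change of variables $\eta = 2^{-2j}\xi$ and the identity $W_j(\xi)=W(\eta)$ collapses the integrand to a fixed function, giving
\[
\langle \mathcal{P}_j, \phi_{j,p}\rangle \;=\; c\, 2^{j} \sum_{i=1}^K \hat{F}\bigl(p + 2^{2j} x_i\bigr),\qquad F(\eta) := W(\eta)^2\, |\eta|^{-1/2}.
\]
Because $W$ is supported in a corona bounded away from the origin and is smooth (inherited from $\Xi \in \mathcal{S}(\mathbb{R})$), the function $F$ is smooth and compactly supported, hence $F \in \mathcal{S}(\mathbb{R}^2)$.

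\textbf{Step 2: Super-polynomial decay and summation.} Since $F \in \mathcal{S}(\mathbb{R}^2)$, for every $N \in \mathbb{N}$ there is $C_N$ independent of $j$ with $|\hat{F}(y)| \leq C_N \langle |y|\rangle^{-N}$. Assuming (as is standard in this setting) that the cluster is translated to follow each singularity location---so that $p \notin \Lambda_{1,j}^\pm$ forces $|p + 2^{2j}x_i| \geq c\, 2^{\epsilon(j-1)}$ for every $i$---one obtains
\[
\delta_{1j} \;\leq\; C_N\, K\, 2^{j} \sum_{|q| > c \cdot 2^{\epsilon(j-1)}} \langle |q|\rangle^{-N}
\;\leq\; C'_N\, 2^{j\,\bigl(1 + \epsilon(2-N)\bigr)},
\]
using the standard tail estimate $\sum_{|q|>R} \langle |q|\rangle^{-N} \lesssim R^{2-N}$ valid for $N>2$.

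\textbf{Step 3: Choosing $N$.} For any prescribed target decay rate $M$, picking $N$ with $\epsilon(N-2) - 1 > M$ (which is always possible since $\epsilon > 0$) yields $\delta_{1j} = O(2^{-Mj})$, hence $\delta_{1j} = o(2^{-Mj})$ upon increasing $N$ by one, as required. The principal technical obstacle is the first step---verifying that $F$ is genuinely Schwartz after the change of variables, which requires the corona support of $W$ to stay strictly away from the frequency origin so that the singularity $|\eta|^{-1/2}$ does not contaminate smoothness. Everything else is summation of a rapidly decreasing tail.
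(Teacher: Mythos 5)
Your proof is correct and follows essentially the same route as the paper's: pass to the Fourier side, use $\widehat{|x|^{-3/2}}=c|\xi|^{-1/2}$, rescale by $\eta=2^{-2j}\xi$ to extract the factor $2^{j}$ and a fixed smooth compactly supported profile $W(\eta)^2|\eta|^{-1/2}$ (nonsingular since the corona support of $W$ avoids the origin), and then sum the rapidly decaying tail over $|p|>2^{\epsilon j}$. The only cosmetic difference is that you invoke the Schwartz decay of $\hat F$ directly where the paper carries out the underlying integration by parts explicitly, and you handle the translation to general singularity locations $x_i$ which the paper disposes of by its ``without loss of generality'' reduction to a single point at the origin.
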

\begin{proof}

Without loss of generality, we can restrict the general case to  $\mathcal{P}(x)=\frac{1}{|x|^{3/2}}.$ By definition, we have
\begin{eqnarray}
\delta_{1,j} &=& \sum_{(j^{\prime},m) \notin \Lambda_{1,j}}\langle \psi_{j^{\prime},m}, \mathcal{P}_j \rangle  \nonumber \\
&=& \sum_{j^{\prime} \geq 0, |m| > 2^{\epsilon j} }\int_{\mathbb{R}^2}  2^{-2j}W_{j^{\prime}}(\xi)e^{2 \pi i\xi^T\frac{m}{2^{2j^{\prime}}}}W_j(\xi) \frac{1}{|\xi|}  d\xi. \nonumber 
\end{eqnarray}
We can assume that $j^{\prime}=j$ due to the fact that $W_{j^{\prime}}(\xi)W_j(\xi) =0$ for $|j^{\prime} - j | >1 $.
Applying the change of variables $\eta=(\eta_1, \eta_2) =2^{-2j}(\xi_1, \xi_2),$ we obtain
\begin{eqnarray}
\delta_{1,j} 
& \lesssim & 2^j \sum_{|m| > 2^{\epsilon j} }\int_{\mathbb{R}^2}  \frac{1}{|\eta|}  W^2(\eta)e^{2 \pi i\eta^Tm} d\eta. \nonumber
\end{eqnarray}
We now apply integration by parts with respect to $\eta_1, \eta_2$, respectively, for $N =1,2, \dots, $   
\begin{eqnarray*}
\delta_{1,j} &\lesssim & 2^j
\sum_{\substack{ m_1, m_2 \neq 0, \\  |m| > 2^{\epsilon j} }} \Big |
\int_{\mathbb{R}^2} |m_1|^{-N} |m_2|^{-N} \frac{\partial^{2N}}{\partial \eta_1^{N}\partial \eta_2^{N}} \Big [\frac{1}{|\eta|}W(\eta ) \Big ] e^{2\pi i \zeta^T m} d\eta \Big | \\
& \leq  & C_{N} 2^j \sum_{|m|>2^{\epsilon j}} |m_1|^{-N}|m_2|^{-N} \\
& \leq  & C_{N} 2^j 2^{-(N-1)(\epsilon j-1)}, \quad \forall N \in \mathbb{N}.
\end{eqnarray*}
Here we note that the boundary terms vanish due to the compact support of $W$. 
This finishes the proof. 
\end{proof}
\begin{lemma} \label{3007-2}
We have $\delta_{2j} = \circ(2^{-Nj}), \quad j \rightarrow \infty.$ 
\end{lemma}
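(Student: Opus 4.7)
The plan is to bound $\delta_{2,j} = \sum_{(j',l,k;\iota)\notin \Lambda_{2,j}^{\pm}} |\langle \psi_{j',l,k}^{\iota}, w\mathcal{L}_j\rangle|$ by splitting the complement of the cluster into four regimes and estimating each separately. The key geometric fact I will exploit is that $w\mathcal{L}$ has Fourier transform $\widehat{w\mathcal{L}}(\xi) = \hat{w}(\xi_1)$, so its spectral energy concentrates along the $\xi_2$-axis, and its physical-space support lies on the line $x_2 = 0$. The shearlets in the cluster $\Lambda_{2,j}$ are exactly those designed to be coherent with such a singularity (vertical cone, shear $|l|\leq 1$, spatially near the line), so everything else ought to decay super-polynomially.

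First I would handle the wrong-scale regime $|j' - j| \geq 2$: the frequency supports $W_{j'}$ and $W_j$ are disjoint, so $\langle \psi_{j',l,k}^{\iota}, w\mathcal{L}_j\rangle = 0$ and this regime contributes nothing. Next I would dispose of the horizontal cone $\iota = \rm h$ at scales $j' \in \{j-1,j,j+1\}$: these shearlets live in frequency where $|\xi_1| \sim 2^{2j}$, but $\widehat{w\mathcal{L}_j}(\xi) = \hat{w}(\xi_1) W_j(\xi)$ and since $w\in C_c^\infty$, $|\hat{w}(\xi_1)| \lesssim \langle \xi_1\rangle^{-K}$ for every $K$. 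A Plancherel estimate of $|\langle \hat{\psi}_{j,l,k}^{\rm h}, \widehat{w\mathcal{L}_j}\rangle|$ then yields a factor $2^{-2Kj}$, and summing over the $O(2^{3j})$ admissible $(l,k)$ still gives an $o(2^{-Nj})$ contribution. The same argument disposes of vertical-cone shearlets with $|l| \geq 2$ after rewriting $\langle \psi_{j,l,k}^{\rm v}, w\mathcal{L}_j\rangle = \int w(x_1)\psi_{j,l,k}^{\rm v}(x_1, 0)\,dx_1$: a large shear $|l|$ makes the trace of $\psi_{j,l,k}^{\rm v}$ on $x_2=0$ oscillate at frequency $\sim l\cdot 2^j$, so integration by parts in $x_1$ against smooth $w$ gives $|l|^{-K}$ decay, which, combined with the $O(1)$ summation in $k$ (constrained by the support of $w$), sums to a tail of order $2^{-Kj}$ for any $K$.

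The main case, and the one I expect to be the real obstacle, is the vertical cone with $|l| \leq 1$ but $|k_2 - lk_1| > 2^{\epsilon j}$ (equivalently, shearlets well-aligned with the line but spatially off it). Here I invoke Lemma \ref{0907-2}(iii) at $x_2 = 0$ to get
\[
|\psi_{j,l,k}^{\rm v}(x_1,0)| \leq C_N \cdot 2^{3j/2}\langle |2^j x_1 - k_1|\rangle^{-N}\langle |l2^j x_1 - k_2|\rangle^{-N}.
\]
After the substitution $u = 2^j x_1 - k_1$ and the observation that the second factor becomes $\langle |lu + (lk_1 - k_2)|\rangle^{-N}$, the integrand is jointly peaked only where both $|u|$ and $|lk_1 - k_2|$ are small; since we are in the regime $|k_2 - lk_1| > 2^{\epsilon j}$, at least one of the two weights is of size $\langle 2^{\epsilon j}\rangle^{-N}$ throughout. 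This gives the pointwise bound
\[
|\langle \psi_{j,l,k}^{\rm v}, w\mathcal{L}_j\rangle| \leq C_{N} \|w\|_\infty\, 2^{j/2}\,\langle |k_2 - lk_1|\rangle^{-N}.
\]
The hard bookkeeping is then to sum this over $|l| \leq 1$, $|k_1| \leq 2^j(\rho + 1)$ (forced by $\supp w$), and $|k_2 - lk_1| > 2^{\epsilon j}$: the $k_2$-sum gives $2^{-(N-1)\epsilon j}$, the $k_1$-sum contributes $2^j$, and the prefactor contributes $2^{j/2}$, yielding a total of order $2^{(3/2 - (N-1)\epsilon)j}$, which is $o(2^{-Mj})$ for any prescribed $M$ once $N$ is taken large enough. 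Finally, I would fold in the $j' = j\pm 1$ contributions identically (replacing $\Lambda_{2,j}$ by $\Lambda_{2,j}^{\pm}$ only costs a constant factor) to conclude the lemma.
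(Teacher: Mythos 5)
The paper does not prove this lemma directly: it simply observes that the shearlet system $\mathbf{\Psi}$ coincides with the universal shearlet system of \cite{7} for scaling parameter $\alpha=1$ and cites \cite[Proposition 5.2]{7}, where the rapid decay of the cluster approximation error of a (filtered) line singularity is established. Your proposal is therefore a genuinely different route: a self-contained four-regime decomposition of $(\Lambda_{2,j}^{\pm})^c$ (wrong scales, horizontal cone, vertical cone with $|l|\ge 2$, vertical cone with $|l|\le 1$ but $|k_2-lk_1|>2^{\epsilon j}$), and the decomposition is exhaustive. The wrong-scale regime does vanish by disjointness of the coronae, the frequency-localization arguments for the horizontal cone and for $|l|\ge 2$ are the right mechanism (the wedge of $\hat\psi^{\rm v}_{j,l,k}$ sits at slope $\xi_1/\xi_2\approx l2^{-j}$, so $\hat w(\xi_1)$ is evaluated where $|\xi_1|\gtrsim(|l|-1)2^{j}$), and your treatment of the main regime via Lemma \ref{0907-2}(iii) restricted to $x_2=0$, with the splitting of the two weights according to whether $|u|$ or $|lu+lk_1-k_2|$ carries the large quantity $|k_2-lk_1|>2^{\epsilon j}$, gives exactly the bound $2^{(3/2-(N-1)\epsilon)j}$ that closes the argument since $N$ is arbitrary. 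Two points deserve tightening if you write this out in full. First, in the horizontal-cone and $|l|\ge2$ regimes you cannot literally ``sum over the $O(2^{3j})$ admissible $(l,k)$'': the translation index $k$ ranges over all of $\mathbb{Z}^2$, so the crude Plancherel bound must be supplemented by the spatial decay of $\psi^{\iota}_{j,l,k}$ against the compact support of $w$ (or by integration by parts in $\xi$ to gain $\langle|k|\rangle^{-M}$) to make the $k$-sum converge; this is routine but not free. Second, the coefficients are taken against $w\mathcal{L}_j=F_j*(w\mathcal{L})$, not against $w\mathcal{L}$ itself, so the identity $\langle\psi^{\rm v}_{j,l,k},w\mathcal{L}_j\rangle=\int w(x_1)(F_j*\psi^{\rm v}_{j,l,k})(x_1,0)\,dx_1$ requires noting that $F_j*\psi^{\rm v}_{j,l,k}$ obeys the same decay estimates as $\psi^{\rm v}_{j,l,k}$ (its Fourier transform is $W_j\hat\psi^{\rm v}_{j,l,k}$, which has the same form). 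With those repairs your argument is complete; what it buys over the paper's citation is transparency and independence from \cite{7}, at the cost of roughly two pages of standard but unavoidable bookkeeping.
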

\begin{proof}
This lemma is a special case of  \cite[Proposition 5.2]{7}. It was used there universal shearlet systems with flexible scaling parameter $\alpha$ which coincide with our shearlets $\Psi$  for case $\alpha=1$.
\end{proof}

\begin{lemma} \label{3007-3}
For every $N \in \mathbb{N}$ the sequence $(\delta_{3,j})_{j \in \mathbb{N}}$ decays rapidly, i.e.,
\begin{equation}  
\delta_{3,j}:=\sum_{(m^\prime, n^\prime) \notin B(0,M_j) \times I_T^{\pm} } | \langle \mathcal{T}_{j}, g_{m^\prime, n^\prime} \rangle |= o(2^{-Nj}).
\end{equation}
\end{lemma}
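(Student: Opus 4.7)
The plan is to split the complementary index set into two disjoint pieces and treat each separately. Every $(m',n')$ outside the product $B(0,M_j)\times I_T^{\pm}$ satisfies either (i) $n'\notin I_T^{\pm}$ (with $m'$ arbitrary), or (ii) $n'\in I_T^{\pm}$ but $m'\notin B(0,M_j)$. Transferring to the frequency side via $\hat{\mathcal{T}}_j(\xi)=W_j(\xi)\sum_{n\in I_T}d_n\hat g(\xi-n)$ and $\hat g_{m',n'}(\xi)=\hat g(\xi-n')e^{\pi i\xi^Tm'}$ gives
\[
\langle \mathcal{T}_j,g_{m',n'}\rangle=\sum_{n\in I_T}d_n\int W_j(\xi)\,\hat g(\xi-n)\,\overline{\hat g(\xi-n')}\,e^{-\pi i\xi^Tm'}\,d\xi.
\]
Since $\supp\hat g\subset[-1,1]^2$, the product $\hat g(\xi-n)\overline{\hat g(\xi-n')}$ vanishes whenever $|n-n'|>1$; if $n'\notin I_T^{\pm}$ this happens for every $n\in I_T$, so the inner product is identically zero. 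In case (ii) only finitely many $n$ (those with $|n-n'|\le 1$) contribute, and moreover $W_j$ constrains $\xi$ to a dyadic corona of scale $2^{2j}$, forcing $n'$ to lie within a bounded dilation of $\mathcal{A}_j$; by assumption \eqref{1907} the number of such $n'$ is at most $\lesssim 2^{(1-\epsilon)j}$.

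For the quantitative bound in case (ii), I would apply integration by parts $K$ times in whichever coordinate of $m'$ is dominant. The integrand is smooth and compactly supported with volume of support uniformly bounded in $j$. Crucially, each $\xi$-derivative of $W_j(\cdot)=W(\cdot/2^{2j})$ produces a harmless factor $2^{-2j}$ rather than a loss, and derivatives of $\hat g$ are bounded by Schwartz regularity. Absorbing $\sup_n|d_n|$ and the bounded number of contributing $n$'s into the constant, one obtains
\[
|\langle \mathcal{T}_j,g_{m',n'}\rangle|\le C_K\,(1+|m'|)^{-K}
\]
for every $K\in\mathbb{N}$, uniformly in $j$, $m'$, $n'$. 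Summing then gives
\[
\delta_{3,j}\lesssim_K\Bigl(\sum_{|m'|>M_j}(1+|m'|)^{-K}\Bigr)\cdot 2^{(1-\epsilon)j}\lesssim_K M_j^{2-K}\cdot 2^{(1-\epsilon)j}=2^{j[\epsilon(2-K)/6+1-\epsilon]},
\]
using $M_j=2^{\epsilon j/6}$. Given any $N$, choosing $K$ sufficiently large forces the exponent below $-N$, establishing $\delta_{3,j}=o(2^{-Nj})$.

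The main technical subtlety I anticipate is verifying that all constants are genuinely $j$-independent: one must track that the $L^\infty$ bounds on the integrand and all its mixed derivatives do not hide any positive power of $2^j$. This ultimately follows from the fact that $W_j$ is a rescaling of the fixed bump $W$ by $2^{-2j}$, so differentiation gains smallness rather than growth, and the Fourier support of the integrand has volume at most that of a unit ball for every $j$. Once this uniform-in-$j$ control is in place, the rest is just a cardinality estimate for $I_T^{\pm}\cap\mathcal{A}_j$ provided by \eqref{1907} combined with the choice $M_j=2^{\epsilon j/6}$.
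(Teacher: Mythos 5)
Your argument is correct and follows the same skeleton as the paper's proof: the identical two-case split (the inner product vanishes identically when $n'\notin I_T^{\pm}$, by disjointness of $\supp\hat g(\cdot-n)$ and $\supp\hat g(\cdot-n')$ together with $\supp W_j$; otherwise only $m'\notin B(0,M_j)$ remains), followed by a decay-in-$m'$ bound multiplied by the cardinality estimate $|I_T^{\pm}\cap\mathcal{A}_j|\lesssim 2^{(1-\epsilon)j}$ from \eqref{1907} and the choice $M_j=2^{\epsilon j/6}$. The one place you diverge is the quantitative step: you obtain $|\langle\mathcal{T}_j,g_{m',n'}\rangle|\le C_K(1+|m'|)^{-K}$ by integrating by parts on the frequency side against the modulation $e^{\pi i\xi^{\top}m'}$, using that each derivative of $W_j(\cdot)=W(2^{-2j}\cdot)$ costs only a harmless factor $2^{-2j}$ and that the support has $j$-uniform volume. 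The paper instead stays on the spatial side, invoking the decay estimate $|g_{m,n}(x)|\le C_N\langle|x_1+\tfrac{m_1}{2}|\rangle^{-N}\langle|x_2+\tfrac{m_2}{2}|\rangle^{-N}$ of Lemma \ref{0907-2}(i) and the convolution inequality $\int\langle|t|\rangle^{-N}\langle|t+a|\rangle^{-N}dt\le C_N\langle|a|\rangle^{-N}$ to reach the same $\langle|m'_1|\rangle^{-N}\langle|m'_2|\rangle^{-N}$ bound. These are two routes to the same estimate (the spatial decay in Lemma \ref{0907-2} is itself proved by the integration by parts you perform), so the difference is one of bookkeeping rather than substance; your version is self-contained, the paper's reuses a lemma needed elsewhere anyway. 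One shared loose end, present in the paper as well: strictly speaking the nonvanishing terms require $n'$ only to lie within distance $O(1)$ of $\mathcal{A}_j$ rather than in $\mathcal{A}_j$ itself, so the count should formally be taken over a unit enlargement of the corona; this does not affect the $2^{(1-\epsilon)j}$ order.
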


\begin{proof}
The proof of this lemma follows an argument similar to that of \cite[Proposition 8.2]{14}. We also include the proof here since we used a modified model of texture. By definition, we have
\begin{eqnarray*}
   \delta_{3,j} = \sum_{(m^\prime, n^\prime) \notin B(0,M_j) \times I_T^{\pm} } |  \langle \mathcal{T}_{j}, g_{m^\prime, n^\prime} \rangle | 
\end{eqnarray*}
We now consider two cases.

Case 1: $n^\prime \notin I_T^\pm \cap \mathcal{A}_j$.
By Plancherel, we have
\begin{eqnarray*}
  |  \langle \mathcal{T}_{j}, g_{m^\prime, n^\prime} \rangle | 
 &=& |  \langle \hat{\mathcal{T}}_{j}, \hat{g}_{m^\prime, n^\prime} \rangle | \\
  &=& \sum_{ |n^\prime -n| >1} \int \Big | \sum_{n \in I_T } d_n W_j(\xi) \hat{g}(\xi - n) \hat{g}(\xi - n^\prime) e^{2 \pi i  \xi^\top \frac{m^\prime}{2}} \Big | \\
  & =& 0,
 \end{eqnarray*}
where the last equality is due to $\supp \hat{g} = [-1,1]^2,$ and $\supp W_j = \mathcal{A}_j.$

Case 2: $n^\prime \in I_T^\pm \cap \mathcal{A}_j$. 
By Lemma \ref{0907-2} and the boundedness of $(d_n)_{n \in \mathbb{Z}^2}$, we obtain 
\begin{eqnarray*}
   \delta_{3,j} &=& \sum_{\substack{m^\prime  \notin B(0,M_j) \\ n^\prime \in I_T^\pm \cap \mathcal{A}_j }} |  \langle \mathcal{T}_{j}, g_{m^\prime, n^\prime} \rangle | = \sum_{\substack{m^\prime  \notin B(0,M_j) \\ n^\prime \in I_T^\pm \cap \mathcal{A}_j }}  \Big | \int_{\mathbb{R}^2} \sum_{n \in I_T} d_n g_{0,n}(x)  g_{m^\prime, n^\prime}(x) dx \Big | \\
   &\leq &  \sum_{\substack{ |m^\prime| > 2^{\epsilon j/6} \\  n^\prime \in I_T^\pm \cap \mathcal{A}_j}}   \int_{\mathbb{R}^2}C_N  \langle | x_1|\rangle^{-N}\langle | x_2|\rangle^{-N} \langle | x_1 + \frac{m^\prime_1}{2}|\rangle^{-N} \langle | x_1 + \frac{m^\prime_2}{2}|\rangle^{-N} dx\\
   &\leq &  C_N  \cdot \sum_{\substack{ |m^\prime| > 2^{\epsilon j/6} \\ n^\prime \in I_T^\pm \cap \mathcal{A}_j }}      \langle | \frac{m^\prime_1}{2}|\rangle^{-N} \langle | \frac{m^\prime_2}{2}|\rangle^{-N},
\end{eqnarray*}
here the last equality is due to the fact that  there exists a constant $C_N>0$ satisfying
\begin{equation*}
    \int_{\mathbb{R}} \langle |t| \rangle^{-N} \langle | t + a| \rangle^{-N} dt
\leq  C_N \langle |a| \rangle^{-N}, \quad  \forall a \in \mathbb{R}. \label{100615}
\end{equation*}
Thus, by \eqref{1907} we have 
\begin{eqnarray*}
  \delta_{3,j} &\leq & C_N\cdot  2^{(1-\epsilon)j} \cdot  \sum_{|m^\prime| > 2^{\epsilon j/6} }\langle |\frac{m^\prime_1}{2}|\rangle^{-N}  \langle |\frac{m^\prime_2}{2}|\rangle^{-N}  \\
  &\leq & C_N \cdot 2^{(1-\epsilon)j} \cdot  \int_\mathbb{R} \int^{+\infty}_{\frac{1}{2} 2^{\epsilon j/6}} \langle |\frac{t_1}{2}|\rangle^{-N}  \langle | \frac{t_2}{2}|\rangle^{-N} dt_1 dt_2 \\
&  \leq & C_N^\prime \cdot 2^{(1-\epsilon)j} \cdot 2^{-(N-1)\epsilon j/6}.
\end{eqnarray*}
This concludes the claim since we can choose an  arbitrarily large $N \in \mathbb{N}.$  
\end{proof}

As a direct consequence of Lemmata \ref{3007-1}, \ref{3007-2}, \ref{3007-3}, we have the following proposition
\begin{proposition} \label{2107-20}
For any $N \in \mathbb{N}$, we have $\delta_{j}:= \delta_{1j}+ \delta_{2j}+ \delta_{3j} = \circ(2^{-Nj}).$
\end{proposition}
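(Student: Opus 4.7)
The plan is essentially to combine the three preceding lemmata, since Proposition \ref{2107-20} is purely a packaging statement. Lemma \ref{3007-1}, Lemma \ref{3007-2}, and Lemma \ref{3007-3} have already established that each summand $\delta_{1j}$, $\delta_{2j}$, $\delta_{3j}$ is $o(2^{-Nj})$ as $j \to \infty$ for every $N \in \mathbb{N}$. The only task left is to observe that a finite sum of sequences, each of which is $o(2^{-Nj})$, remains $o(2^{-Nj})$.

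Concretely, I would fix an arbitrary $N \in \mathbb{N}$, multiply through by $2^{Nj}$, and apply each lemma to conclude that $2^{Nj}\delta_{ij} \to 0$ for $i = 1,2,3$. Since the sum of three null sequences is again a null sequence,
\[
2^{Nj} \delta_j \;=\; 2^{Nj}\delta_{1j} + 2^{Nj}\delta_{2j} + 2^{Nj}\delta_{3j} \;\longrightarrow\; 0,
\]
which is by definition $\delta_j = o(2^{-Nj})$. Because $N$ was arbitrary, the statement holds for all $N \in \mathbb{N}$.

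There is no genuine obstacle at this stage; all of the analytic content sits inside the three lemmata. The main ingredients behind them are (i) rapid decay obtained from integration by parts against the compactly supported corona window $W$, exploiting the fact that $\hat{W}$ is smooth and compactly supported (Lemma \ref{3007-1}); (ii) the shearlet-side estimate imported from \cite{7} specialised to the case $\alpha = 1$ (Lemma \ref{3007-2}); and (iii) the Schwartz decay of the Gabor window $g$ together with the sparsity bound \eqref{1907} on $I_T^{\pm} \cap \mathcal{A}_j$, combined with the standard convolution-type inequality $\int \langle |t| \rangle^{-N} \langle |t+a| \rangle^{-N} \, dt \lesssim \langle |a| \rangle^{-N}$ (Lemma \ref{3007-3}). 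Proposition \ref{2107-20} simply bundles the three rapid-decay statements into a single quantity $\delta_j$, which is convenient later when controlling the total relative sparsity $\delta = \delta_{1j} + \delta_{2j} + \delta_{3j}$ in applications of Theorems \ref{2006-3} and \ref{2006-4} toward Theorems \ref{3007-10} and \ref{3007-11}.
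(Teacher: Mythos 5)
Your proposal is correct and matches the paper exactly: the paper states Proposition \ref{2107-20} as a direct consequence of Lemmata \ref{3007-1}, \ref{3007-2}, and \ref{3007-3}, with no further argument, and your observation that a finite sum of $o(2^{-Nj})$ sequences is again $o(2^{-Nj})$ is precisely the (trivial) content being invoked.
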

\subsection{Estimate of cluster coherence for separation task}
In this section, we give an estimate for the term $\mu_{c,N}^{sep}$.
The following lemma ensures the success of separating points from curves
\begin{proposition} \label{2107-21}
We have
\begin{enumerate} [label=(\roman*)]
    \item $\mu_c(\Lambda_{1,j}^{\pm}, \mathbf{\Psi}; \mathbf{\Phi}) \rightarrow 0, \; j \rightarrow \infty,$ \\
    \item $\mu_c(\Lambda_{2,j}^{\pm}, \mathbf{\Phi}; \mathbf{\Psi}) \rightarrow 0, \; j \rightarrow \infty. $
\end{enumerate}
\end{proposition}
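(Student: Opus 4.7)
The plan is to carry out a direct frequency-domain estimate for each of the two cross-coherences, exploiting three ingredients: the frequency-scale disjointness $W_j W_{j'} \equiv 0$ for $|j - j'| \geq 2$ (which immediately collapses the maximum over the second frame to the $O(1)$ scales adjacent to $j$), the smoothness and compact support of the bumps $W$ and $V_\iota$ (which license repeated integration by parts), and the extra cancellation afforded by keeping the summation inside the inner product, as in the sharpened definition \eqref{CT300}.

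For item (i), I fix an arbitrary shearlet atom $\psi_{j_0, l_0, k_0}^\iota$ and note that its inner product with any wavelet $\phi_{j',p}$ vanishes unless $|j' - j_0| \leq 1$. For such matching scales, Plancherel together with the explicit Fourier expressions yields
\[ \left\langle \sum_{|p| \leq 2^{\epsilon j'}} \phi_{j',p},\, \psi_{j_0,l_0,k_0}^\iota \right\rangle = c \int_{\mathbb{R}^2} W_{j'}(\xi)\, W_{j_0}(\xi)\, V_\iota\!\left(\xi^T A_\iota^{-j_0} S_\iota^{-l_0}\right) D_{j'}(\xi)\, e^{2\pi i \xi^T A_\iota^{-j_0} S_\iota^{-l_0} k_0}\, d\xi, \]
where $D_{j'}(\xi) := \sum_{|p|\leq 2^{\epsilon j'}} e^{2\pi i 2^{-2j'} \xi^T p}$ is a truncated Dirichlet-type kernel and $c = O(2^{-j/2})$. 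After rescaling $\eta = 2^{-2j'}\xi$, the integrand becomes a smooth, compactly supported function of $\eta$ multiplied by a rapid oscillation governed by $k_0$; iterated integration by parts in $\eta_1,\eta_2$ converts this oscillation into polynomial decay of arbitrary order in the translation, uniformly over $(l_0,k_0,\iota)$, thereby producing the desired $o(1)$ bound.

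For item (ii), I swap the roles: fix a wavelet $\phi_{j_0,p_0}$, restrict to $|j' - j_0| \leq 1$, and exploit the fact that the shear constraint $|l|\leq 1$ in $\Lambda_{2,j}$ localizes the frequency support of the summed shearlets to a narrow angular wedge around the $\xi_2$-axis, while $\hat{\phi}_{j_0,p_0}$ is isotropic. Passing to frequency via Plancherel, using the partition-of-unity structure of $V_\iota$ over admissible shears, and integrating by parts in the shear-adapted coordinates $\eta^T A_\iota^{-j'} S_\iota^{-l}$ then produces polynomial decay in $|p_0|$ uniformly in $p_0$. The shear restriction effectively reduces the admissible cluster size from $2^{2\epsilon j}$ to order $2^{\epsilon j}$, which the decay beats comfortably.

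The main obstacle is extracting enough decay from the oscillatory integrals to defeat the cluster growth ($2^{2\epsilon j}$ in (i), $2^{\epsilon j}$ in (ii)) across the full range $\epsilon < 1/3$ required by Theorem~\ref{3007-10}. This is precisely where the sharpened cluster coherence \eqref{CT300} is indispensable: the naive triangle-inequality bound $\max_{\mu}\sum_{i\in\Lambda}|\langle \phi_{1i}, \phi_{2\mu}\rangle|$ combined with Lemma~\ref{0907-2}(iv) would only give $O(2^{(2\epsilon - 1/2)j})$, which suffices for $\epsilon < 1/4$ but not beyond. I expect most of the technical work to lie in the careful bookkeeping of this improvement and in verifying uniformity of the integration-by-parts estimate across all $\sim 2^j$ admissible shear parameters.
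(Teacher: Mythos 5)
Your high-level framing (scale disjointness $W_jW_{j'}\equiv 0$ for $|j-j'|\geq 2$, Plancherel, exploiting the summed form of \eqref{CT300}) is sensible, but the engine you rely on in both items cannot produce the required bound. In (i) you integrate by parts against the phase $e^{2\pi i \xi^T A_\iota^{-j_0}S_\iota^{-l_0}k_0}$ to obtain ``polynomial decay of arbitrary order in the translation, uniformly over $(l_0,k_0,\iota)$'' --- but the cluster coherence is a \emph{supremum} over $(l_0,k_0,\iota)$, and the worst case is $k_0$ at or near zero, where there is no oscillation and integration by parts yields nothing beyond the trivial $L^1$ bound of the integrand. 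The same objection applies verbatim to (ii), where you claim decay in $|p_0|$ while the coherence takes $\max_{p_0}$. The actual source of smallness lies elsewhere: each individual cross inner product is $O(2^{-j/2})$ because the isotropic wavelet corona and the anisotropic shearlet wedge overlap only on a set of relative measure $2^{-j}$ (Lemma \ref{0907-2}(iv)), and this must be combined with control of the cluster sum. The paper's proof of (i) is exactly the resulting counting bound $\#\Lambda_{1,j}\cdot C2^{-j/2}\leq C2^{(2\epsilon-1/2)j}$, which converges under the standing assumption \eqref{0907-1} that $\epsilon<1/4$. For (ii) counting fails because $\Lambda_{2,j}$ is \emph{infinite} (unconstrained in $k_1$) --- your claim that the shear restriction reduces the cluster to ``order $2^{\epsilon j}$'' is not correct --- and the paper instead integrates the product of the spatial decay envelopes of Lemma \ref{0907-2}(ii),(iii), rescales, and uses the uniform lattice summability $\sum_{k\in\mathbb{Z}^2}\langle|2^{-j}y_1+k_1|\rangle^{-N}\langle|y_2+l2^jx_1+k_2|\rangle^{-N}\leq C'$ to obtain $C2^{-j/2}$ with no $\epsilon$-loss. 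Neither of these two steps appears in your proposal, so as written it does not close.

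One point in your favor: your observation that the triangle-inequality bound in (i) only covers $\epsilon<1/4$ is correct and exposes a real tension in the paper, since Theorems \ref{3007-10} and \ref{3007-11} state $\epsilon<1/3$ while \eqref{0907-1} and the proof of (i) require $\epsilon<1/4$. But you do not carry out the improvement you assert \eqref{CT300} makes possible. Doing so would require an $L^1$ estimate for the truncated Dirichlet-type kernel $D_{j'}$ restricted to the rescaled shearlet wedge (schematically $\int_{\textup{wedge}}|D_{j'}|\,d\eta \lesssim 2^{-j}\,2^{\epsilon j}\,j$, which would upgrade (i) to all $\epsilon<1/2$ up to logarithms); that estimate, and not integration by parts in $k_0$, is where the claimed gain would have to come from, and it is absent from your sketch.
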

\begin{proof}

(i)  By Lemma \ref{0907-2}, we have
\begin{eqnarray}
\mu_c(\Lambda_{1j}, \mathbf{\Psi}; \mathbf{\Phi}) &\leq & \# \{ (j,m) \in \Lambda_{1,j} \} \cdot  C \cdot 2^{-j/2} \nonumber \\
&\leq &C \cdot 2^{2\epsilon j} \cdot  2^{-j/2} \nonumber \\
&=& C \cdot 2^{-(1 -4 \epsilon)j/2} \xrightarrow[j \rightarrow +\infty ]{\textup{By } \eqref{0907-1}} 0. \nonumber
\end{eqnarray}

ii) Lemma \ref{0907-2} yields
\begin{eqnarray}
&& \mu_c(\Lambda_{2j}, \mathbf{\Phi}; \mathbf{\Psi}) \leq  C_N   2^{7j/2} \sum_{\substack{|l| \leq 1, |j^{\prime}-j| \leq 1 \\k \in \mathbb{Z}^2, |k_2 -  lk_1 | < 2^{\epsilon j^\prime} }} \int_{\mathbb{R}^2} \langle |2^{2j^{\prime}} x_1 + m^{\prime}_1|\rangle^{-N} \cdot \nonumber \\
&& \quad  \langle |2^{2j^{\prime}} x_2 + m^{\prime}_2|\rangle^{-N}  \langle |2^{ j^{\prime}}x_1 + k_1|\rangle^{-N} \langle |2^{2j^{\prime}}x_2+ l2^{ j^{\prime} } x_1+k_2|\rangle^{-N} dx. \nonumber
\end{eqnarray}
We consider only  $j^{\prime}=j$ since the cases are estimated similary up to a constant.

Using the change of variable $(y_1, y_2) = (2^{2j}x_1, 2^{2j}x_2)$ yields
\begin{eqnarray}
\mu_c(\Lambda_{2j}, \mathbf{\Phi}; \mathbf{\Psi}) &\leq & C_N   2^{-j/2} \sum_{\substack{l \in \{ -1, 0,1\}, k \in \mathbb{Z}^2 \\ |k_2 -  lk_1 | < 2^{\epsilon j^\prime} }}
\int_{\mathbb{R}^2} \langle |y_1 + m^{\prime}_1|\rangle^{-N}\langle |y_2 + m^{\prime}_2|\rangle^{-N}  \nonumber \\
&&  \qquad \cdot \langle |2^{-j}y_1 + k_1|\rangle^{-N} \langle |y_2+ l2^{ j } x_1+k_2|\rangle^{-N} dy. \label{466}
\end{eqnarray}
Furthermore, we have
\begin{equation}
    \sum_{ k \in \mathbb{Z}^2 }  \langle |2^{-j}y_1 + k_1|\rangle^{-N} \langle |y_2+ l2^{ j } x_1+k_2|\rangle^{-N}   \leq C^{\prime}. \label{467}
\end{equation}
Combining \eqref{466} with \eqref{467}, we obtain
\begin{eqnarray}
\mu_c(\Lambda_{2j}, \mathbf{\Phi}; \mathbf{\Psi}) &\leq & C^{\prime}_N \cdot   2^{-j/2}
\int_{\mathbb{R}^2} \langle |y_1 + m^{\prime}_1|\rangle^{-N}\langle |y_2 + m^{\prime}_2|\rangle^{-N} dy \nonumber \\
&\leq & C_N^{''} \cdot 2^{-j/2} \xrightarrow{j \rightarrow +\infty } 0.
\end{eqnarray}
We complete the proof.
\end{proof}

\begin{proposition} \label{2107-22}
We have 
\begin{enumerate}[label=(\roman*)]
    \item $\mu_c(\Lambda_{3,j},  \mathbf{G};\mathbf{\Psi}), \mu_c(\Lambda_{3,j},  \mathbf{G};\mathbf{\Phi}) \rightarrow 0, \quad j \rightarrow \infty.$ \\
    \item $\mu_c(\Lambda_{1,j}, \mathbf{\Phi}; \mathbf{G}), \mu_c(\Lambda_{2,j}, \mathbf{\Psi}; \mathbf{G}) \rightarrow 0, \quad j \rightarrow \infty.$
\end{enumerate}

\end{proposition}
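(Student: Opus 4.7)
The plan splits the four coherences by which frame's atoms are being summed over the cluster. In part (i) and the wavelet half of (ii) the cluster is finite, and a triangle inequality combined with the uniform estimates of Lemma \ref{0907-2}(iv) and a cardinality count suffices; the shearlet half of (ii) is the only genuinely delicate case.

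For (i), Plancherel together with $\supp\hat{g}(\cdot-n)\subset [-1,1]^2+n$ and $\supp W_{j'}\subset \mathcal{A}_{j'}$ forces only the shearlet/wavelet scales $j'\in\{j-1,j,j+1\}$ to contribute. A direct count gives
\[
|\Lambda_{3,j}^{\pm}|\;\le\;|\mathbb{Z}^2\cap B(0,M_j)|\cdot|I_T^{\pm}\cap\mathcal{A}_j|\;\lesssim\;2^{\epsilon j/3}\cdot 2^{(1-\epsilon)j}\;=\;2^{(1-2\epsilon/3)j},
\]
using $M_j=2^{\epsilon j/6}$ and the assumption \eqref{1907}. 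With the uniform bounds $|\langle g_{m,n},\phi_{j,p}\rangle|\lesssim 2^{-2j}$ and $|\langle g_{m,n},\psi_{j,l,k}^{\iota}\rangle|\lesssim 2^{-3j/2}$, the triangle inequality immediately yields $\mu_c(\Lambda_{3,j},\mathbf{G};\mathbf{\Phi})\lesssim 2^{-(1+2\epsilon/3)j}$ and $\mu_c(\Lambda_{3,j},\mathbf{G};\mathbf{\Psi})\lesssim 2^{-(1/2+2\epsilon/3)j}$. The wavelet-Gabor coherence in (ii) is analogous: $|\Lambda_{1,j}^{\pm}|\lesssim 2^{2\epsilon j}$ combined with $|\langle\phi_{j,p},g_{m',n'}\rangle|\lesssim 2^{-2j}$ gives $\lesssim 2^{-(2-2\epsilon)j}\to 0$ since $\epsilon<\tfrac{1}{3}$.

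The main obstacle is $\mu_c(\Lambda_{2,j},\mathbf{\Psi};\mathbf{G})$: because $\Lambda_{2,j}^{\pm}$ is unbounded in $k_1$, a naive triangle inequality diverges. Indeed, in shearlet coordinates the Gabor decay $\langle|x_1+m'_1/2|\rangle^{-N}$ becomes $\langle|y_1/2^j+m'_1/2|\rangle^{-N}$, whose integral grows like $2^j$, in sharp contrast to the wavelet-shearlet case of Proposition \ref{2107-21}(ii). To exploit the oscillation hidden in the sum inside the inner product in \eqref{CT300}, I would switch to the frequency domain. Parameterizing $k=(k_1,a+lk_1)$ with $|a|\le 2^{\epsilon j}$, and using the identity $(\xi^T A_v^{-j}S_v^{-l})_1+l(\xi^T A_v^{-j}S_v^{-l})_2=2^{-j}\xi_1$, Poisson summation on the free index $k_1$ yields
\[
\sum_{k\in\Lambda_{2,j,l}} e^{-2\pi i\xi^T A_v^{-j}S_v^{-l}k}\;=\;2^j\sum_{m\in\mathbb{Z}}\delta(\xi_1-2^j m)\cdot D_{\lfloor 2^{\epsilon j}\rfloor}(2^{-2j}\xi_2),
\]
where $D_A$ denotes the Dirichlet kernel of order $A$. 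Testing against $\overline{\hat{g}_{m',n'}}$, supported in the unit ball $B(n',1)$, selects only $O(1)$ lattice points $\xi_1=2^j m$ near $n'_1$, on each of which $|D_{\lfloor 2^{\epsilon j}\rfloor}|\lesssim 2^{\epsilon j}$. Combining the shearlet prefactor $2^{-3j/2}$, the Poisson factor $2^j$, and the Dirichlet bound $2^{\epsilon j}$ produces $\mu_c(\Lambda_{2,j},\mathbf{\Psi};\mathbf{G})\lesssim 2^{(\epsilon-1/2)j}\to 0$ since $\epsilon<\tfrac{1}{2}$. Summation over $|l|\le 1$ and the three scales in $\Lambda_{2,j}^{\pm}$ only affects constants, and this Fourier-side Poisson argument is where the entire difficulty of the proposition is concentrated.
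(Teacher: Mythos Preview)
Your treatment of part (i) and of $\mu_c(\Lambda_{1,j},\mathbf{\Phi};\mathbf{G})$ in (ii) is exactly the paper's: a cardinality count on the cluster combined with the uniform cross-frame bounds of Lemma~\ref{0907-2}(iv). The bounds you obtain, $2^{-(1/2+2\epsilon/3)j}$, $2^{-(1+2\epsilon/3)j}$, $2^{-(2-2\epsilon)j}$, match the paper's line by line.

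For $\mu_c(\Lambda_{2,j},\mathbf{\Psi};\mathbf{G})$ your route and the paper's genuinely diverge. The paper stays entirely in the spatial domain and uses the term-by-term triangle inequality you describe as hopeless. The trick that makes it work is a \emph{partial} change of variables: the paper substitutes only $y_2=2^{2j}x_2+l2^jx_1-lk_1$ and leaves $y_1=x_1$ untouched, so that the Gabor weight $\langle|y_1+m'_1/2|\rangle^{-N}$ keeps its unit-scale integrability. The infinite $k_1$-sum is then controlled by the elementary lattice bound $\sum_{k_1\in\mathbb{Z}}\langle|2^jy_1-k_1|\rangle^{-N}\le C$, while the constraint $|k_2-lk_1|\le 2^{\epsilon j}$ contributes the factor $2^{\epsilon j}$. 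This yields $C_N\,2^{-j/2}\cdot 2^{\epsilon j}=C_N\,2^{(\epsilon-1/2)j}$, the same rate your Poisson argument produces. So your diagnosis that ``a naive triangle inequality diverges'' is off: it only diverges if one first passes to the full shearlet coordinates $y=S_v^lA_v^jx$, which the paper avoids.

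Your Fourier--Poisson argument is nonetheless correct and has its own appeal: it exploits the sum-inside form of the definition \eqref{CT300} directly, the phase separation $\xi^TA_v^{-j}S_v^{-l}k=2^{-j}\xi_1k_1+2^{-2j}\xi_2a$ is clean, and the compact support of $\hat g_{m',n'}$ genuinely localises the Dirac comb to $O(1)$ lattice lines. What the paper's spatial approach buys is that it actually bounds the larger quantity $\max_{m',n'}\sum_{\eta\in\Lambda_{2,j}}|\langle\psi_\eta,g_{m',n'}\rangle|$, which is what Lemma~\ref{Lm1} implicitly requires; your argument bounds only the literal sum-inside cluster coherence. Both arrive at $2^{(\epsilon-1/2)j}\to 0$.
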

\begin{proof}
i) We first consider the term $\mu_c(\Lambda_{3,j},  \mathbf{G};\mathbf{\Psi})$. For this, we assume  that the maximum is attained at $l^\prime \in \mathbb{Z}, k^\prime \in \mathbb{Z}^2$ and $\rm{\iota} \in \{ \rm{h}, \rm{v}  \}$. We now use \eqref{1907} and Lemma \ref{0907-2}, we have
\begin{eqnarray*}  
\mu_c(\Lambda_{3,j},  \mathbf{G}; \mathbf{\Psi}) & = & \sum_{m \in B(0, M_j), n \in I_T^{\pm}}  | \langle g_{m,n}, \psi_{j, l^\prime, k^\prime}^{ \iota} \rangle | \\
& \leq &  C_N \cdot 2^{- j}  \# \{ m \in B(0,M_j) \} \cdot \#  \{ n \in I_T^{\pm} \cap \mathcal{A}_j\} \\
& \leq & C_N \cdot  2^{-j}  \cdot M_j^2 \cdot 2^{(1-\epsilon)j}  \\
& \stackrel{\mathclap{\normalfont{M_j = 2^{\epsilon j/6} }}} =& \quad C_N \cdot 2^{-2\epsilon j/3}
\xrightarrow{j \rightarrow +\infty} 0. 
\end{eqnarray*}
Similarly, we have
\begin{eqnarray*}  
\mu_c(\Lambda_{3,j},  \mathbf{G}; \mathbf{\Phi}) & = & \sum_{m \in B(0, M_j), n \in I_T^{\pm}}  | \langle g_{m,n}, \phi_{j,  k^\prime} \rangle | \\
& \leq &  C_N \cdot 2^{- 2j}  \# \{ m \in B(0,M_j) \} \cdot \#  \{ n \in I_T^{\pm} \cap \mathcal{A}_j \} \\
& \leq & C_N \cdot  2^{-2j}  \cdot M_j^2 \cdot 2^{(1-\epsilon)j}  \\
& \stackrel{\mathclap{\normalfont{M_j = 2^{\epsilon j/6} }}} =& \quad C_N \cdot 2^{-j}\cdot 2^{-2\epsilon j/3}
\xrightarrow{j \rightarrow +\infty} 0. 
\end{eqnarray*}

ii)  By the definition
$$ \mu_c (\Lambda_{2,j},  \mathbf{\Psi}   ; \mathbf{G}) = \max_{(m,n)} \sum_{\eta \in \Lambda_{1,j}^\pm} |\langle  \psi_{j, l, k}^{ \rm{\iota}}, g_{m,n} \rangle|,$$ where $\eta=(j, l, k;
\rm{\iota}).$ 
We assume  that the maximum is achieved at  $m^{\prime}, n^{\prime} \in \mathbb{Z}^2.$  By Lemma \ref{0907-2} (i),(iii), and  the change of variables $(y_1, y_2) = ( x_1, 2^{2j}x_2+ l2^{ j } x_1 -lk_1)$, we have 
\begin{eqnarray*}
 \mu_c (\Lambda_{1,j},  \mathbf{\Psi}; \mathbf{G})  &=& \sum_{\eta \in \Lambda_{1,j}} |\langle  \psi_{j, l, k}^{ \rm{\iota}}, g_{m^{\prime},n^{\prime}} \rangle| \\
 & \leq &  C_N  2^{\frac{3}{2}j}  \sum_{\substack{|l| \leq 1, k \in \mathbb{Z}^2 \\ |k_2 - lk_1| \leq 2^{\epsilon j} }} \int\limits_{\mathbb{R}^2} \langle |2^{ j}x_1 - k_1|\rangle^{-N} \langle |2^{2j}x_2+ l2^{ j } x_1-k_2|\rangle^{-N} \\
&& \qquad 
 \cdot \langle | x_1 + \frac{m_1}{2}|\rangle^{-N}\langle | x_2 + \frac{m_2}{2}|\rangle^{-N}dx_1 dx_2. \\
 & =&   C_N  2^{-
 \frac{1}{2}j} \sum_{\substack{|l| \leq 1, k \in \mathbb{Z}^2 \\ |k_2 - lk_1| \leq 2^{\epsilon j} }} \int_{\mathbb{R}^2}\langle |2^{j}y_1 - k_1|\rangle^{-N} \langle |y_2+ lk_1-k_2|\rangle^{-N} \cdot \\
&&   \langle |y_1 + \frac{m_1}{2}|\rangle^{-N} \langle |2^{-2j} y_2 -l2^{-j} y_1+2^{-2j}lk_1 +\frac{m_2}{2}|\rangle^{-N}dy_1 dy_2 \\ 
& \leq &   \sum_{\substack{|l| \leq 1, k \in \mathbb{Z}^2\\ |k_2| \leq 2^{\epsilon j} }} \int_{\mathbb{R}^2}C_N \cdot 2^{-j/2}  \langle |2^{ j}y_1 - k_1|\rangle^{-N} \langle |y_2-k_2|\rangle^{-N} \\ 
&& \qquad \qquad \quad \cdot \langle |y_1 + \frac{m_1}{2}|\rangle^{-N} dy_1dy_2 \\
 & =&  \sum_{\substack{|l| \leq 1, k_2 \in \mathbb{Z}\\ |k_2| \leq 2^{\epsilon j} }} \int_{\mathbb{R}^2}C_N2^{-j/2}  \Big ( \sum_{k_1 \in \mathbb{Z}}\langle |2^{ j}y_1 - k_1|  \rangle^{-N} \Big ) \langle |y_2-k_2|\rangle^{-N} \\
 && \qquad \qquad 
 \cdot \langle |y_1 + \frac{m_1}{2}|\rangle^{-N}dy_1 dy_2 \\ 
 & \leq & C_N \cdot 2^{-j/2} \cdot 2^{\epsilon j} = C_N \cdot 2^{-(1-2\epsilon)j/2} \xrightarrow{j \rightarrow +\infty} 0. 
\end{eqnarray*}
Similarly, we assume that the maximum of the cluster coherence $\mu_c (\Lambda_{1,j},  \mathbf{\Phi}   ;  \mathbf{G})$ is attained for some $(m^{\prime \prime}, n^{\prime \prime}) \in \mathbb{Z}^2.$ By Lemma \ref{0907-2} (iv),  we have 
\begin{eqnarray*}
 \mu_c (\Lambda_{1,j},  \mathbf{\Phi}   ;  \mathbf{G})  &=&  \sum_{|p| \leq 2^{\epsilon j} } |\langle  \phi_{j,p}, g_{m^{\prime \prime},n^{\prime \prime}} \rangle| \\
 & \leq &  C_N \cdot  2^{-2j} \cdot 2^{2\epsilon j} = C_N \cdot 2^{-(2-\epsilon)j}  \xrightarrow{j \rightarrow +\infty} 0. 
\end{eqnarray*}
This concludes the claim. 
\end{proof}

The following lemma guarantees the  successful separation of curves and texture as well as points and texture.
\begin{proposition} \label{2107-23}
 We have
\begin{enumerate}[label=(\roman*)]
    \item  $\mu_c (\Lambda_{3,j}^{\pm},  P_{M,j} \mathbf{G}; \mathbf{\Psi})  \rightarrow 0, \quad j \rightarrow \infty.$
    \item $\mu_c (\Lambda_{2,j}^{\pm}, P_{M,j} \mathbf{\Psi};  \mathbf{G})  \rightarrow 0, \quad j \rightarrow \infty.$
\end{enumerate}
\end{proposition}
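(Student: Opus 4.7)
The proof extends the strategy of Proposition~\ref{2107-22} to the projected setting. Since $P_{M,j}$ is an orthogonal projection, we have
$$\langle P_{M,j} g_{m,n},\psi_\eta\rangle = \langle g_{m,n},P_{M,j}\psi_\eta\rangle = \int_{|x_1|\leq h_j}g_{m,n}(x)\overline{\psi_\eta(x)}\,dx,$$
and symmetrically for part~(ii). The crucial new input is that the missing strip $\{|x_1|\leq h_j\}$ has thickness $h_j=o(2^{-(1+\epsilon)j})$, much smaller than the shearlet scale $2^{-j}$; this smallness produces an explicit $h_j$-factor in each restricted integral that we aim to play off against the cluster cardinality.

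For part~(i), bringing the sum inside via the triangle inequality and applying an $L^\infty$--$L^1$ Hölder bound on the strip gives
$$\mu_c(\Lambda_{3,j}^\pm, P_{M,j}\mathbf{G};\mathbf{\Psi})\leq \Big\|\sum_{(m,n)\in\Lambda_{3,j}^\pm}|g_{m,n}|\Big\|_{L^\infty(\mathbb{R}^2)}\cdot\max_\eta \int_{|x_1|\leq h_j}|\psi_\eta(x)|\,dx.$$
The pointwise estimate $\sum_{(m,n)\in\Lambda_{3,j}^\pm}|g_{m,n}(x)|\leq |I_T^\pm\cap\mathcal{A}_j|\cdot\sum_{|m|\leq M_j}|g(x+m/2)|\lesssim 2^{(1-\epsilon)j}$ follows from \eqref{1907} and the Schwartz decay of $g$ provided by Lemma~\ref{0907-2}(i). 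For the shearlet $L^1$-norm on the strip, Lemma~\ref{0907-2}(iii) combined with the change of variables $(y_1,y_2)=(2^j x_1,2^{2j}x_2+l2^j x_1-k_2)$ for the vertical cone (and the analogous one $(y_1,y_2)=(2^{2j}x_1,2^j x_2)$ for the horizontal cone) reduces the strip integral to a convolution-type integral of Schwartz tails, yielding a bound of the form $h_j\cdot 2^{\alpha j}$ multiplied by Schwartz decay in the translation parameter $k$. Multiplying by $2^{(1-\epsilon)j}$ and invoking $h_j=o(2^{-(1+\epsilon)j})$ with $\epsilon<1/3$ produces the desired $o(1)$ conclusion.

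For part~(ii), I would fix the Gabor index $(m_0,n_0)$ at which the max is essentially attained and bound $\sum_{\eta\in\Lambda_{2,j}^\pm}|\langle \psi_\eta,P_{M,j}g_{m_0,n_0}\rangle|$ by splitting the cluster according to whether the shearlet overlaps the strip. Since every $\psi^{\mathrm v}_{j',l,k}\in\Lambda_{2,j}^\pm$ is spatially concentrated near $x_1=k_1/2^{j'}$ on a scale $2^{-j'}$, and $h_j\ll 2^{-j}$, only those with $|k_1|\lesssim h_j\,2^j+O(1)=O(1)$ contribute non-negligibly; combined with the constraint $|k_2-lk_1|\leq 2^{\epsilon j}$ and $|l|\leq 1$, the effective count drops to $O(2^{\epsilon j})$, while the tail from the remaining shearlets is handled by Schwartz decay in $k_1$. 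Each relevant term is estimated by Cauchy--Schwarz as $\|P_{M,j}\psi_\eta\|_2\cdot \|g_{m_0,n_0}\|_2\lesssim\sqrt{h_j\,2^j}$, and summing yields the required decay after invoking $h_j=o(2^{-(1+\epsilon)j})$. The main obstacle is the delicate bookkeeping in part~(i) for the horizontal shearlets $\psi^{\mathrm h}$, whose spatial extent in $x_1$ is only $2^{-2j}$ and which can therefore lie essentially inside the strip; this is handled by exploiting the extra $1/\max(1,|l|)$ gain produced by the shear change of variables to restrict the effective number of contributing horizontal shearlet parameters and close the argument within the range $0<\epsilon<1/3$.
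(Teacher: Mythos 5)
Your part (i) is, at its core, the paper's own argument: the paper likewise fixes the maximizing shearlet index, sums the Gabor envelopes $|g_{m,n}(x)|=|g(x+m/2)|$ over all $m\in\mathbb{Z}^2$ into a bounded function of $x$ (rather than paying the cardinality $M_j^2$), uses the shearlet's $L^1$ mass $\lesssim 2^{-3j/2}$ after the shear change of variables, and multiplies by $|I_T^{\pm}\cap\mathcal{A}_j|\lesssim 2^{(1-\epsilon)j}$ from \eqref{1907}. Note that the strip is not actually needed in (i): the paper simply enlarges $\int_{|x_1|\le h_j}$ to $\int_{\mathbb{R}^2}$ and still gets $2^{(1-\epsilon)j}\cdot 2^{-3j/2}=2^{-(1/2+\epsilon)j}\to 0$. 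Consequently your closing worry about horizontal shearlets lying inside the strip is moot; in (i) the maximum is taken over a single shearlet, there is no sum over shearlet parameters, and no $1/\max(1,|l|)$ gain is required for either cone.

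The genuine problem is in part (ii). Your per-term estimate via Cauchy--Schwarz, $|\langle P_{M,j}\psi_\eta,g_{m_0,n_0}\rangle|\le\|P_{M,j}\psi_\eta\|_2\,\|g_{m_0,n_0}\|_2\lesssim\sqrt{2^jh_j}$, combined with your effective count $O(2^{\epsilon j})$, yields a total of order $2^{\epsilon j}\sqrt{2^jh_j}$. Under the stated hypothesis $h_j=o(2^{-(1+\epsilon)j})$ this is only $o\bigl(2^{\epsilon j}\cdot 2^{-\epsilon j/2}\bigr)=o\bigl(2^{\epsilon j/2}\bigr)$, which does not tend to zero, so the step fails. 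The loss comes from pairing against the full $\|g_{m_0,n_0}\|_2=O(1)$, which discards the localization of $g_{m_0,n_0}$ on the thin strip. Two repairs: (a) keep the projection on both factors, $|\langle P_{M,j}\psi_\eta,P_{M,j}g_{m_0,n_0}\rangle|\le\|P_{M,j}\psi_\eta\|_2\,\|P_{M,j}g_{m_0,n_0}\|_2\lesssim\sqrt{2^jh_j}\cdot\sqrt{h_j}=2^{j/2}h_j$, which gives a total $2^{\epsilon j}\,2^{j/2}h_j=o(2^{-j/2})$; or (b) argue as the paper does, pairing the pointwise decay estimates of Lemma \ref{0907-2} directly, changing variables $y=S_{\mathrm{v}}^{l}A_{\mathrm{v}}^{j}x$ so that the strip becomes $|y_1|\le 2^jh_j$, summing the $k_1$-tails into a constant and counting the $O(2^{\epsilon j})$ admissible $k_2$, which gives $2^{-3j/2}\cdot 2^{\epsilon j}\cdot 2^jh_j\to 0$. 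Either repair closes the argument, but as written your part (ii) does not.
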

\begin{proof}
i) By definition, we have
 \begin{equation*}
     \mu_c (\Lambda_{3,j}, P_{M,j}   \mathbf{G}; \mathbf{\Psi}) = \sup_{(j,l,k)} \sum_{(m,n) \in\Lambda_{3,j}^\pm}| \langle P_{M,j} g_{m,n},  \psi_{j, l, k}^{ \rm{\iota}}\rangle |.
 \end{equation*}
Without loss of generality, we may assume that the maximum is achieved at $(j,l^\prime,k^\prime; \rm{\iota})\in \Delta_j$. Thus, we obtain
\begin{eqnarray*} \mu_c (\Lambda_{3,j}, P_{M,j}   \mathbf{G}; \mathbf{\Psi}) &=& \sum_{|m| \leq 2^{\epsilon/6}, n \in I_T^{\pm} \cap \mathcal{A}_{j}}| \langle P_{M,j} g_{m,n},  \psi_{j, l^\prime, k^\prime}^{ \rm{\iota}}\rangle |\\
& =& \sum_{|m| \leq 2^{\epsilon/6}, n \in I_T^{\pm} \cap \mathcal{A}_{j}} \Big | \int_{-h_j}^{h_j} \int_\mathbb{R} g_{m,n}(x) \overline{\psi_{j, l^\prime, k^\prime}^{ \rm{\iota}}(x)} dx_1dx_2 \Big | \\
& \leq &  \sum_{|m| \leq 2^{\epsilon/6}, n \in I_T^{\pm} \cap \mathcal{A}_{j}} \int_{-h_j}^{h_j} \int_\mathbb{R} |g_{m,n}(x)| |\psi_{j, l^\prime, k^\prime}^{ \rm{\iota}}(x)| dx_1dx_2. 
\end{eqnarray*}
We consider only the case $\rm{\iota} = \rm{v},$ the other case can be done analogously. Indeed, we recall Lemma \ref{0907-2} (i),(iii),
$$ |g_{m,n}(x)| \leq  C_N  \cdot  \langle |x_1 + \frac{m_1}{2}|\rangle^{-N}\langle | x_2 + \frac{m_2}{2}|\rangle^{-N} , \forall m, n \in \mathbb{Z}^2,$$
$$ | \psi_{j, l^\prime, k^\prime}^{ \rm{v}}(x)| \leq C_N \cdot 2^{3j/2} \cdot \langle |2^{ j}x_1 - k^\prime_1|\rangle^{-N} \langle |2^{2j}x_2+ l^\prime 2^{ j } x_1-k^\prime_2|\rangle^{-N}.$$
We now apply the  change of variable $y =S^{l^\prime}_{\rm{v}}A_{ \rm{v}}^{j} x= (2^{ j}x_1, 2^{2j}x_2 + l^\prime2^{ j}x_1)$. This leads to
\begin{eqnarray}
 \int_{-h_j}^{h_j} \int_\mathbb{R}  |g_{m,n}(x)| | \psi_{j, l^\prime, k^\prime}^{ \rm{v}}(x)|   dx 
 &\leq &  C_N 2^{-\frac{3}{2}j}    \int_{\mathbb{R}^2}     \langle| 2^{- j}y_1 + \frac{m_1}{2} |\rangle^{-N} \langle |y_1-k^{\prime}_1|\rangle^{-N}   \nonumber \\
 && \cdot \langle | 2^{-2j}(y_2-l^\prime y_1) + \frac{m_2}{2}|\rangle^{-N}  \langle |y_2- k^\prime_2|\rangle^{-N} dy \nonumber \\
  &\leq &  C_N2^{-\frac{3}{2}j} \int_{\mathbb{R}^2}     \langle| 2^{- j}y_1 + \frac{m_1}{2} |\rangle^{-N} \langle |y_1-k^\prime_1|\rangle^{-N}  \nonumber \\
  &&  \cdot \langle | 2^{-2j}(y_2-l^\prime y_1) + \frac{m_2}{2}|\rangle^{-N}   \langle |y_2- k^\prime_2|\rangle^{-N} dy. \nonumber \\ \label{2107-1}
 \end{eqnarray}
In addition, we have 
\begin{equation} \label{2107-2}
     \sum_{(m_1, m_2) \in \mathbb{Z}^2} \langle | 2^{- j}y_1 + \frac{m_1}{2} |\rangle^{-N} \langle | 2^{-2j}(y_2-l^\prime y_1)  + \frac{m_2}{2} |\rangle^{-N} \leq C_N^\prime 
\end{equation}
 and
\begin{equation} \label{2107-3}
    \int_{\mathbb{R}^2} \langle |y_1 - k^\prime_1 |\rangle^{-N} \langle |y_2 - k^\prime_2 |\rangle^{-N}  dy_1 dy_2 \leq C_N^{\prime \prime}.
\end{equation}
Combining (\ref{2107-1}), (\ref{2107-2}), (\ref{2107-3})  and (\ref{1907}), we finally obtain
\begin{eqnarray*}
 \mu_c (\Lambda_{3,j}, P_{M,j}   \mathbf{G}; \mathbf{\Psi})  &\leq & \sum_{n \in I_T^{\pm} \cap \mathcal{A}_{j}} C_N \cdot  2^{-3j/2} \cdot   \\
& \leq & C_N \cdot  2^{-3j/2} \cdot 2^{j}  =   C_N \cdot  2^{-j/2} \xrightarrow  {j \rightarrow +\infty} 0. 
\end{eqnarray*}  

ii) For the other term $\mu_c (\Lambda_{2,j}^\pm, P_{M,j} \mathbf{\Psi};\mathbf{G}),$  we assume that the maximum in the definition of the cluster is attained for some $(m^\prime, n^\prime) \in \mathbb{Z}^2 \times \mathbb{Z}^2  $. Thus, we have
\begin{eqnarray*} \mu_c (\Lambda_{1,j}, P_{M,j} \mathbf{\Psi}; \mathbf{G} ) &=&  \sum_{(j, l, k; 
\iota) \in \Lambda_{1,j}} | \langle P_{M,j} \psi_{j, l, k}^{
\rm{
\iota
}} , g_{m^\prime,n^\prime}\rangle |\\
& =& \sum_{\substack{|l| \leq 1, k \in \mathbb{Z}^2 \\ |k_2 - lk_1| \leq 2^{\epsilon j} }}  \Big | \int_{-h_j}^{h_j} \int_\mathbb{R} \psi_{j, l, k}^{
\rm{v}}(x) \overline{g_{m^\prime,n^\prime}(x)}  dx \Big | \\
& \leq &  \sum_{\substack{|l| \leq 1, k \in \mathbb{Z}^2 \\ |k_2 - lk_1| \leq 2^{\epsilon j} }}   \int_{-h_j}^{h_j} \int_\mathbb{R} |\psi_{j, l, k}^{\rm{v}}(x)| |g_{m^\prime,n^\prime}(x)|  dx|. 
\end{eqnarray*}
 By Lemma \ref{0907-2}
 and the change of variable $y =S^{l}_{\rm{v}}A_{ \rm{v}}^{j} x= (2^{ j}x_1, 2^{2j}x_2 + l2^{ j}x_1)$, we obtain
\begin{eqnarray}
  \int\limits_{\text{-}h_j}^{h_j} \int\limits_\mathbb{R}  | \psi_{j, l, k}^{ \rm{v}}(x)|| g_{m^\prime,n^\prime}(x)|   dx  &\leq &  C_N  2^{\frac{\text{-}3j}{2}}   \int\limits_{\text{-}2^jh_j}^{2^jh_j} \int\limits_{\mathbb{R}}   \langle |y_1-k_1|\rangle^{-N} \langle| 2^{-
 j}y_1 + \frac{m^\prime_1}{2} |\rangle^{- N}    \nonumber \\
 &&  \langle |y_2- k_2|\rangle^{-N} 
 \langle | 2^{-2j}(y_2-ly_1) + \frac{m^\prime_2}{2}|\rangle^{-N}  dy. \nonumber \\
 \label{2107-11}
 \end{eqnarray}
Furthermore, we have
\begin{equation} \label{2107-12}
    \langle | 2^{-
    j}y_1 + \frac{m^\prime_1}{2} |\rangle^{-N} \langle | 2^{-2j}(y_2-ly_1)  + \frac{m^\prime_2}{2} |\rangle^{-N} \leq 1
\end{equation} 
 and
 \begin{eqnarray}
&& \int_\mathbb{R} \Big ( \sum_{\substack{|l| \leq 1, k \in \mathbb{Z}^2 \\ |k_2 - lk_1| \leq 2^{\epsilon j} }}\langle |y_1-k_1|\rangle^{-N}  \langle |y_2- k_2|\rangle^{-N} \Big ) dy_2  \nonumber \\
& = &  \sum_{\substack{|l| \leq 1, k_1, k_3 \in \mathbb{Z} \\ |k_3| \leq 2^{\epsilon j} }}\langle |y_1-k_1|\rangle^{-N} \cdot \Big ( \int_\mathbb{R} \langle |y_2- k_3-lk_1|\rangle^{-N} dy_2 \Big )\nonumber \\
 &\leq & C_N \cdot 2^{\epsilon j} \cdot \int_{\mathbb{R}} \langle |t-y_1|\rangle^{-N}  dt \leq C_N^\prime 2^{\epsilon j}. \label{2107-13}
 \end{eqnarray}
We now combine (\ref{2107-11}), (\ref{2107-12}) and  (\ref{2107-13}) to obtain
\begin{eqnarray*}
 \mu_c (\Lambda_{2,j}, P_j \mathbf{\Psi}; \mathbf{G} ) &\leq& C_N^{\prime \prime} \cdot 2^{-3j/2} \cdot 2^{ j} h_j \cdot 2^{\epsilon j} \\
& \leq& \;\; C_N^{\prime \prime} \cdot 2^{-(1-2\epsilon)j/2} \cdot 2^{ j} h_j \xrightarrow{j \rightarrow +\infty} 0 .
\end{eqnarray*}
\end{proof} 

Next is our final estimate.
\subsection{Estimate of cluster coherence for inpainting task}

The previous subsection ensures the success of separating three components. Now it remains to provide estimates for inpainting curves as well as texture. Indeed, by our assumption that the pointlike part has no missing content so we need only to inpaint curvilinear and texture structures. For this, we would like to recall that the inpainting task is encoded by the term  $\mu_{c,N}^{inp}:= \max\limits_{1 \leq m \leq N} \inf\limits_{\Phi_m^d \in \mathbb{D}_{\Phi_m}}  \mu_c(\Lambda_m, P_M \Phi_m; \Phi_m^d )$. The following lemma is what we need.
%The following lemma is similar to \cite[Proposition 5.6]{7} and \cite[Proposition 9.4]{14} recalling here only what we need. 

\begin{proposition} \label{2107-24}
 We have
\begin{enumerate}[label=(\roman*)]
    \item $\mu_c (\Lambda_{2,j}^{\pm}, P_{M,j} \mathbf{\Psi} ; \mathbf{\Psi}  )  \rightarrow 0, \; j \rightarrow \infty.$
    \item $\mu_{c}(\Lambda_{3,j}, P_{M,j}\mathbf{G}; \mathbf{G}) \rightarrow 0, \; j \rightarrow \infty.$
\end{enumerate}
\end{proposition}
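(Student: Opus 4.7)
The plan is to follow the template established in Propositions \ref{2107-21}--\ref{2107-23}: bound the cluster coherence by a sum of absolute inner products via the triangle inequality, then rewrite each inner product as a spatial integral over the missing strip $[-h_j, h_j] \times \mathbb{R}$, and finally apply the decay estimates of Lemma \ref{0907-2}. The only scale-quantitative input needed is the hypothesis $h_j = o(2^{-(1+\epsilon)j})$, which gives $2^j h_j = o(2^{-\epsilon j}) \to 0$.

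For part (i), I would write
\[
\mu_c(\Lambda_{2,j}^\pm, P_{M,j}\mathbf{\Psi}; \mathbf{\Psi}) \leq \max_{(j',l',k';\iota')} \sum_{(j'',l,k;\iota) \in \Lambda_{2,j}^\pm} \int_{-h_j}^{h_j}\!\!\int_\mathbb{R} |\psi_{j'',l,k}^\iota(x)| \, |\psi_{j',l',k'}^{\iota'}(x)| \, dx,
\]
assume $\iota = \iota' = \mathrm{v}$ and $j'' = j' = j$ (as in the earlier propositions the remaining cases contribute at most a constant multiple), and apply the change of variables $y = S_{\rm v}^l A_{\rm v}^j x$. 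This maps the strip $|x_1| \leq h_j$ to $|y_1| \leq 2^j h_j$, the Jacobian cancels the $2^{3j}$ from the two shearlet amplitudes, and the Schwartz decay from Lemma \ref{0907-2}(iii) collapses into $\langle |y_1-k_1|\rangle^{-N}\langle |y_2-k_2|\rangle^{-N}\langle |y_1-k'_1|\rangle^{-N}\langle |y_2+(l'-l)y_1-k'_2|\rangle^{-N}$. Reindexing $k_3 = k_2 - lk_1$, the sum $\sum_{|l|\leq 1}\sum_{k_1}\sum_{|k_3|\leq 2^{\epsilon j}} \langle |y_1-k_1|\rangle^{-N}\langle |y_2-lk_1-k_3|\rangle^{-N}$ is uniformly bounded in $y$ for $N \geq 2$ (the $k_3$-sum is bounded since each summand is $\leq 1$ and the tail decays summably, the $k_1$-sum is a translation-invariant Schwartz sum). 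What remains is $\int_{-2^j h_j}^{2^j h_j}\int_\mathbb{R} \langle |y_1-k'_1|\rangle^{-N}\langle |y_2+(l'-l)y_1-k'_2|\rangle^{-N}\, dy \leq C \cdot 2^j h_j$, so the total bound is $C \cdot 2^j h_j = o(2^{-\epsilon j}) \to 0$.

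For part (ii), the Gabor elements are isotropic and well-localized, so the computation is more direct. By Lemma \ref{0907-2}(i) the integral $\int_{-h_j}^{h_j}\!\int_\mathbb{R} |g_{m,n}(x)|\,|g_{m',n'}(x)|\,dx$ factors: the $x_2$-integral yields $\int_\mathbb{R} \langle |x_2+m_2/2|\rangle^{-N}\langle|x_2+m'_2/2|\rangle^{-N}dx_2 \leq C \langle|(m_2-m'_2)/2|\rangle^{-N} \leq C$, while the $x_1$-integral over $[-h_j,h_j]$ is trivially $\leq 2h_j$. Using the cardinality bounds $\#\{m \in B(0,M_j)\} \lesssim M_j^2 = 2^{\epsilon j/3}$ and $\#\{n \in I_T^\pm \cap \mathcal{A}_j\} \leq 2^{(1-\epsilon)j}$ (from \eqref{1907}), summing over the cluster yields
\[
\mu_c(\Lambda_{3,j}, P_{M,j}\mathbf{G}; \mathbf{G}) \leq C \cdot h_j \cdot 2^{\epsilon j/3} \cdot 2^{(1-\epsilon)j} = C \cdot h_j \cdot 2^{(1 - 2\epsilon/3)j},
\]
which under the hypothesis $h_j = o(2^{-(1+\epsilon)j})$ decays as $o(2^{-5\epsilon j/3}) \to 0$.

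The main technical obstacle is the combinatorial bookkeeping in part (i): the index set $\Lambda_{2,j}^\pm$ spans three scales and both shearlet cones, and the test shearlet $(j',l',k';\iota')$ may sit at any scale. Unlike for $\langle\psi_{j,l,k}^\iota,\psi_{j',l',k'}^{\iota'}\rangle$ without a projection, here we cannot invoke frequency disjointness to restrict $j' = j$, since the multiplier $P_{M,j}$ destroys frequency localization. The resolution, consistent with the earlier propositions in this section, is that spatial decay controls the cross-scale interactions so that the $j' = j$ case dominates up to a universal constant; I would either justify this once and for all at the start or subsume it into the ``up to a constant'' language used in Proposition \ref{2107-21}(ii).
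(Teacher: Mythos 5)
Your proposal is correct, but it diverges from the paper most notably on part (i): the paper does not actually compute $\mu_c(\Lambda_{2,j}^{\pm}, P_{M,j}\mathbf{\Psi};\mathbf{\Psi})$ at all --- it disposes of that term with a one-line remark that the shearlet system coincides with the universal shearlets of \cite{14} for $\alpha=1$, and imports the estimate from there. You instead carry out the shearlet--shearlet strip integral directly (change of variables $y=S_{\rm v}^{l}A_{\rm v}^{j}x$, cancellation of $2^{3j}$ against the Jacobian, reindexing $k_3=k_2-lk_1$, uniform boundedness of the cluster sum, and the residual $\int_{|y_1|\le 2^jh_j}$ giving $C\cdot 2^jh_j\to 0$), which is a self-contained argument the paper omits; your diagonal-case computation checks out. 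For part (ii) you follow essentially the paper's route; the only difference is that the paper controls the $m$-sum by the summability of integer translates, $\sum_{m}\langle|y_1+\tfrac{m_1}{2}|\rangle^{-N}\langle|y_2+\tfrac{m_2}{2}|\rangle^{-N}\le C_N$, and ends with $C\cdot 2^{j}h_j$, whereas you pay the cardinality $\#B(0,M_j)\lesssim 2^{\epsilon j/3}$ and land on $C\cdot h_j\cdot 2^{(1-2\epsilon/3)j}$; both bounds are $o(1)$ under $h_j=o(2^{-(1+\epsilon)j})$, so nothing is lost.

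One point in part (i) should be firmed up rather than ``subsumed into the up-to-a-constant language'': since $P_{M,j}$ destroys frequency localization, the reduction to $j'=j$ is not automatic, and the uniform bound over test indices genuinely requires a two-case estimate. For $j'\ge j$ one bounds the cluster sum $\sum_{(l,k)\in\Lambda_{2,j}^{\pm}}|\psi_{j,l,k}^{\rm v}(x)|\lesssim 2^{3j/2}$ pointwise and uses $\int_{|x_1|\le h_j}|\psi_{j',l',k'}^{\iota'}|\,dx\lesssim 2^{-j'/2}h_j$, giving $\lesssim 2^{j}h_j$; for $j'\le j$ one reverses the roles, bounding $\|\psi_{j',l',k'}^{\iota'}\|_\infty\lesssim 2^{3j'/2}$ against $\sum_{(l,k)}\int_{|x_1|\le h_j}|\psi_{j,l,k}^{\rm v}|\,dx\lesssim 2^{-3j/2}2^{\epsilon j}2^{j}h_j$, giving $\lesssim 2^{(1+\epsilon)j}h_j$. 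The off-diagonal contribution is therefore not literally dominated by the diagonal one up to a universal constant, but both branches are $o(1)$ under your hypothesis, so the conclusion stands once this is written out.
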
 
\begin{proof}
Here we would like to remark again that universal shearlets  used in \cite{14} coincide with our shearlet system $\Phi$ in case $\alpha=1.$ Now it remains to estimate the other term $\mu_{c}(\Lambda_{3,j}, P_{M,j}\mathbf{G}; \mathbf{G})$.

Without loss of generality, we assume that the maximum is achieved at $(m^\prime, n^\prime) \in \Lambda_{3, j}$. Thus, we obtain
\begin{eqnarray}  \mu_c (\Lambda_{3,j}, P_{M,j}   \mathbf{G};   \mathbf{G} ) &=& \sum_{(m,n) \in M_j \times (I_T^{\pm} \cap \mathcal{A}_{j})}| \langle P_{M,j} g_{m,n}, g_{m^\prime,n^\prime}\rangle | \nonumber \\
& =& \sum_{(m,n) \in M_j \times (I_T^{\pm} \cap \mathcal{A}_{j})} \Big | \int_{-h_j}^{h_j} \int_\mathbb{R} g_{m,n}(x) \overline{g_{m^\prime,n^\prime}(x)} dx \Big | \nonumber \\
& \leq &  \sum_{(m,n) \in M_j \times (I_T^{\pm} \cap \mathcal{A}_{j})} \int_{-h_j}^{h_j} \int_\mathbb{R} |g_{m,n}(x)| |g_{m^\prime,n^\prime}(x)| dx. \qquad \label{1907-A1}
\end{eqnarray}
Next, we recall Lemma \ref{0907-2} (i),
$$ |g_{m,n}(x)| \leq  C_N  \cdot  \langle | x_1 + \frac{m_1}{2}|\rangle^{-N}\langle | x_2 + \frac{m_2}{2}|\rangle^{-N}, \forall m,n \in \mathbb{Z}^2.$$
This leads to
\begin{eqnarray}
  \int_{-h_j}^{h_j} \int_\mathbb{R}  |g_{m,n}(x)|  |g_{m^\prime,n^\prime}(x)| dx 
 &\leq&  \int_{-h_j}^{h_j} \int_\mathbb{R}  C_N ^2  \langle| x_1 + \frac{m_1}{2} |\rangle^{-N} \langle | x_2 + \frac{m_2}{2} |\rangle^{-N}  \cdot
\nonumber \\
&& \langle| x_1 + \frac{m_1^\prime}{2} |\rangle^{-N}  \cdot \langle | x_2 + \frac{m_2^\prime}{2} |\rangle^{-N} dx  \nonumber \\
 & = & C_N^2\int_{-h_j}^{h_j} \int_\mathbb{R}   \langle |y_1 + \frac{m_1}{2} |\rangle^{-N} \langle | y_2 + \frac{m_2}{2} |\rangle^{-N} \cdot \nonumber \\
 &&  \langle |y_1 + \frac{m_1^\prime}{2} |\rangle^{-N} \langle |y_2 + \frac{m_2^\prime}{2} |\rangle^{-N} dy. 
 \label{1907-A2}
 \end{eqnarray}
In addition, we have 
\begin{equation}
    \sum_m \langle |y_1 + \frac{m_1}{2} |\rangle^{-N} \langle | y_2 + \frac{m_2}{2} |\rangle^{-N} \leq C_N^\prime, \; \int_\mathbb{R} \langle |y_2 + \frac{m_2^\prime}{2} |\rangle^{-N} dy_2 \leq C_N^{\prime \prime}. \label{1907-A3}
\end{equation} 
 Thus, by (\ref{1907}) and (\ref{1907-A1}) combined with (\ref{1907-A2}) and (\ref{1907-A3}) , we obtain
 \begin{eqnarray*}
  \mu_c (\Lambda_{2,j}, P_j \mathbf{G}; \mathbf{G} ) &\leq & \sum_{n \in I_T^{\pm} \cap \mathcal{A}_{j}} C_N^{\prime \prime \prime} \cdot h_j   \\
  & \leq & C_N^{\prime \prime \prime} \cdot 2^{ j }h_j\xrightarrow  {j \rightarrow +\infty} 0.
 \end{eqnarray*}
 This concludes the claim.
\end{proof} 

Let us go back to the proof of our main theorems.
\subsection{Proof of Theorem \ref{3007-10} and Theorem \ref{3007-11}}
\begin{proof}
Applying Theorem \ref{2006-3} with respect to Remark \ref{3007-21} (5), we obtain
$$
\| \mathcal{P}_j^* - \mathcal{P}_j  \|_2+ \| \mathcal{C}_j^* - w\mathcal{L}_j  \|_2+ \| \mathcal{T}_j^* - \mathcal{T}_{j}  \|_2 \leq \frac{\delta_{1j}+ \delta_{2j}+ \delta
_{3j}}{1-2\mu_{c,3}}, $$
where $\mu_{c, 3}$ is bounded by \eqref{1308-1}. Roughly speaking, we just need to inpaint  two components of curvilinear singularities and texture and simultaneously separate three geometric components including point singularities.
By Proposition \ref{2107-20}, we have $\delta=\delta_{1,j}+ \delta_{2,j}+ \delta_{3,j} = \circ(2^{-Nj}), \; \forall N \in \mathbb{N}.$
In addition, Propositions \ref{2107-21}, \ref{2107-22}, \ref{2107-23}  yield $\mu_{c,3} \rightarrow 0, \; j \rightarrow \infty.$
This concludes the claim.
\end{proof}

The proof of Theorem \ref{3007-11} follows the same lines as  Theorem \ref{3007-10}. Here we use Theorem \ref{2006-4} instead of Theorem \ref{2006-3}. Now we need to verify \eqref{2006-5}, i.e., 
\begin{equation}
    \sum_{(j,p) \in \Lambda_{1,j}} | \langle \phi_{j,p}, z \rangle | +  \sum_{(j,l,k) \in \Lambda_{2,j}} | \langle \psi_{j,l,k}, z \rangle | + \sum_{(j,p) \in \Lambda_{3,j}} | \langle g_{m,n}, z \rangle | \leq 2^{2j} \| z \|_1, \label{1308-2}
\end{equation}
for all $z \in L^2(\mathbb{R}^2).$
Indeed, by Lemma \ref{0907-2} we have
\begin{eqnarray}
  \sum_{(j,p) \in \Lambda_{1,j}} | \langle \phi_{j,p}, z \rangle | & \leq &\sum_{\substack{p \in \mathbb{Z}^2, \\ |p| \leq 2^{\epsilon j} }}  C_N 2^{2j} \int\limits_{\mathbb{R}^2}    \langle |2^{2j} x_1 + p_1|\rangle^{-N}\langle |2^{2j} x_2 + p_2|\rangle^{-N} |z(x)| dx \nonumber \\
  & \leq & C_N^\prime 2^{2j} \|z \|_1, \label{1308-3}
\end{eqnarray}
where the last inequality follows from
\begin{equation*}
    \sum_{\substack{p \in \mathbb{Z}^2, \\ |p| \leq 2^{\epsilon j} }}  \langle |2^{2j} x_1 + p_1|\rangle^{-N}\langle |2^{2j} x_2 + p_2|\rangle^{-N}  \leq C_N^{\prime \prime }, \; \forall x=(x_1, x_2) \in \mathbb{R}^2.
\end{equation*}
Similarly, we obtain
\begin{eqnarray}
   \sum_{(j,l,k) \in \Lambda_{2,j}} | \langle \psi_{j,l,k}^{\rm{v}}, z \rangle | & \leq &  \sum_{\substack{l \in \{  -1, 0,1 \}  \\   |k_2-lk_1| \leq 2^{\epsilon j} }} C_N  2^{3j/2} \int_{\mathbb{R}^2}   \langle |2^{j}x_1 - k_1|\rangle^{-N} \cdot \nonumber \\ 
   &&\langle|2^{2j}x_2+ l2^{ j } x_1-k_2|\rangle^{-N} |z(x)| dx\nonumber \\ 
   &\leq& C_N^\prime 2^{3j/2} \|z \|_1, \label{1308-4}
\end{eqnarray}
and 
\begin{eqnarray}
 \sum_{(m,n) \in \Lambda_{3,j}} | \langle g_{m,n}, z \rangle | & \leq & \sum_{\substack{|m| \leq 2^{\epsilon j/6} \\  n \in I_T^{\pm} \cap \mathcal{A}_j} }   C_N \int\limits_{\mathbb{R}^2}  \langle | x_1 + \frac{m_1}{2}|\rangle^{-N}\langle |x_2 + \frac{m_2}{2}|\rangle^{-N} |z(x)|dx \nonumber \\
 &\stackrel{\mathclap{\normalfont{\textup{By} \;\eqref{1907} }}} \leq \quad & C_N^\prime 2^{(1-\epsilon)j} \|z\|_1. \label{1308-5}
\end{eqnarray}
We conclude the claim \eqref{1308-2} by a combination of \eqref{1308-3},   \eqref{1308-4}, and  \eqref{1308-5}.

\section{Summary and conclusion} \label{0208-1}

In this paper, we provide a  theoretical study of the simultaneous separation, inpainting, and denoising 
for multi-component signals. To achieve this, we first modify the notions of joint concentration and cluster coherence to encode the separating and inpainting tasks separately. One guarantees the success of the geometric separation  and one for inpainting missing content. This is possible since the missing trip is assumed to be small which does not affect the geometry of the whole signal.
Theoretical results are shown in  Theorems \ref{2006-3} and \ref{2006-4} guarantee the success of our two proposed algorithms, one for constrained $l_1$ optimization and the other for unconstrained $l_1$ optimization.

Motivated by the theoretical guarantees,  we aim to decompose the corrupted image into pointlike, curvelike, and texture parts as accurately as possible. In our analysis, we use a multi-scale approach that the task of simultaneous separation and inpainting is done separately at each scale $j$ by Algorithm \ref{AL1-3007} and \ref{AL2-3007} using  wavelets, shearlets, and Gabor frames. The whole image is then reconstructed from its sub-images by the reconstruction  formula \eqref{0408-1}.  The main idea for the success of the proposed algorithms is  that the significant coefficients by wavelets, shearlets, and Gabor are clustered geometrically in phase space by microlocal analysis. The powerful tool  based on $l_1$ minimization then enables us to explore the differences among these underlying components.  For our main results, Theorem \ref{3007-10} and Theorem \ref{3007-11} show that at sufficiently fine scales the true components of the corrupted image are almost captured by proposed algorithms.
In applications, our theory can be adapted with the change of components and sparse representation systems as long as they provide negligible cluster sparsity and cluster coherence.
By choosing suitable sparsifying systems our methodology can handle the following tasks for multi-component images: 
\begin{enumerate}
    \item Simultaneous separation, inpainting, and denoising: In this case, Algorithm \ref{AL2-3007} is taken into account.
    \item Simultaneous separation and inpainting: For this purpose, we can apply Algorithm \ref{AL1-3007}.
    \item Inpainting:  Once the inpainted components are extracted, one can aggregate them again for inpainting purposes only.
    \item Image separation: In this situation, we restrict our results to the case of no missing parts in the original image.
\end{enumerate}

%\begin{acknowledgements}
%VT.D.  acknowledges support by the VIED-MOET Fellowship through Project 911. 
%\end{acknowledgements}

\end{document}